\newtheorem*{maintheorem*}{Main Theorem}
\newtheorem{theorem}{Theorem}[section]
\newtheorem{prop}[theorem]{Proposition}
\newtheorem{prob}[theorem]{Problem}
\newtheorem{lemma}[theorem]{Lemma}
\newtheorem{cor}[theorem]{Corollary}
\theoremstyle{definition}
\newtheorem{definition}[theorem]{Definition}
\newtheorem{example}[theorem]{Example}
\numberwithin{equation}{section}
\newcommand{\twopf}[4]
{
	\left\{
	\begin{array}{ll}
		#1 & \mbox{if } #2 \\
		#3 & \mbox{if } #4
	\end{array}
	\right.
}
\newcommand{\fivepf}[9]
{
	\left\{
	\begin{array}{ll}
		#1 & \mbox{if } #2 \\
		#3 & \mbox{if } #4 \\
		#5 & \mbox{if $i=1$,} \\
		#6 & \mbox{if } #7 \\
		#8 & \mbox{if } #9
	\end{array}
	\right.
}
\keywords{Positroid, Dyck path, unit interval order, decorated permutation, Le diagram, positive Grassmannian}
\begin{document}
	
	\mbox{}
	\title{Dyck Paths and Positroids \\ from Unit Interval Orders}
	\author{Anastasia Chavez}
	\address{Department of Mathematics\\UC Berkeley\\Berkeley, CA 94720}
	\email{a.chavez@berkeley.edu}
	\author{Felix Gotti}
	\address{Department of Mathematics\\UC Berkeley\\Berkeley, CA 94720}
	\email{felixgotti@berkeley.edu}
	\date{\today}
	
	\begin{abstract}
		It is well known that the number of non-isomorphic unit interval orders on $[n]$ equals the $n$-th Catalan number. Using work of Skandera and Reed and work of Postnikov, we show that each unit interval order on $[n]$ naturally induces a rank $n$ positroid on $[2n]$. We call the positroids produced in this fashion \emph{unit interval positroids}. We characterize the unit interval positroids by describing their associated decorated permutations, showing that each one must be a $2n$-cycle encoding a Dyck path of length $2n$. We also provide recipes to read the decorated permutation of a unit interval positroid $P$ from both the antiadjacency matrix and the interval representation of the unit interval order inducing $P$. Using our characterization of the decorated permutation, we describe the Le-diagrams corresponding to unit interval positroids. In addition, we give a necessary and sufficient condition for two Grassmann cells parameterized by unit interval positroids to be adjacent inside the Grassmann cell complex. Finally, we propose a potential approach to find the $f$-vector of a unit interval order.
	\end{abstract}

\maketitle

\tableofcontents

\section{Introduction}

A \emph{unit interval order} is a partially ordered set that captures the order relations among a collection of unit intervals on the real line. Unit interval orders originated in the study of psychological preferences, first appearing in the work of Wiener \cite{nW14}, and then in greater detail in the work of Armstrong \cite{wA39} and others. They were also studied by  Luce~\cite{rL56} to axiomatize a class of utilities in the theory of preferences. Since then they have been systematically studied (see \cite{DK68, pF85, pF73, FM92, WF57, SR03} and references therein). These posets exhibit many interesting properties; for example, they can be characterized as the posets that are simultaneously $({\bf 3}+ {\bf 1})$-free and $({\bf 2} + {\bf 2})$-free (see Section~\ref{sec:background}). Moreover, it was first proved in \cite{WF57} that the number of non-isomorphic unit interval orders on $[n]$ equals $\frac{1}{n+1}\binom{2n}{n}$, the $n$-th Catalan number (see also \cite[Section~4]{DK68}).

In \cite{SR03}, motivated by the desire to understand the $f$-vectors of various classes of posets, Skandera and Reed showed that a simple procedure for labeling a unit interval order yields the useful form of its $n \times n$ antiadjacency matrix which is totally nonnegative (i.e., has all its minors nonnegative) with its zero entries appearing in a right-justified Young diagram located strictly above the main diagonal and anchored in the upper-right corner. The zero entries of such a matrix are separated from the one entries by a Dyck path joining the upper-left corner to the lower-right corner. Motivated by this observation, we call such matrices \emph{Dyck matrices}. The Hasse diagram and the antiadjacency (Dyck) matrix of a canonically labeled unit interval order are shown in Figure~\ref{fig:UIO and its antiadjacency matrix}.

\begin{figure}[h]
	\centering
	\includegraphics[width = 2.8cm]{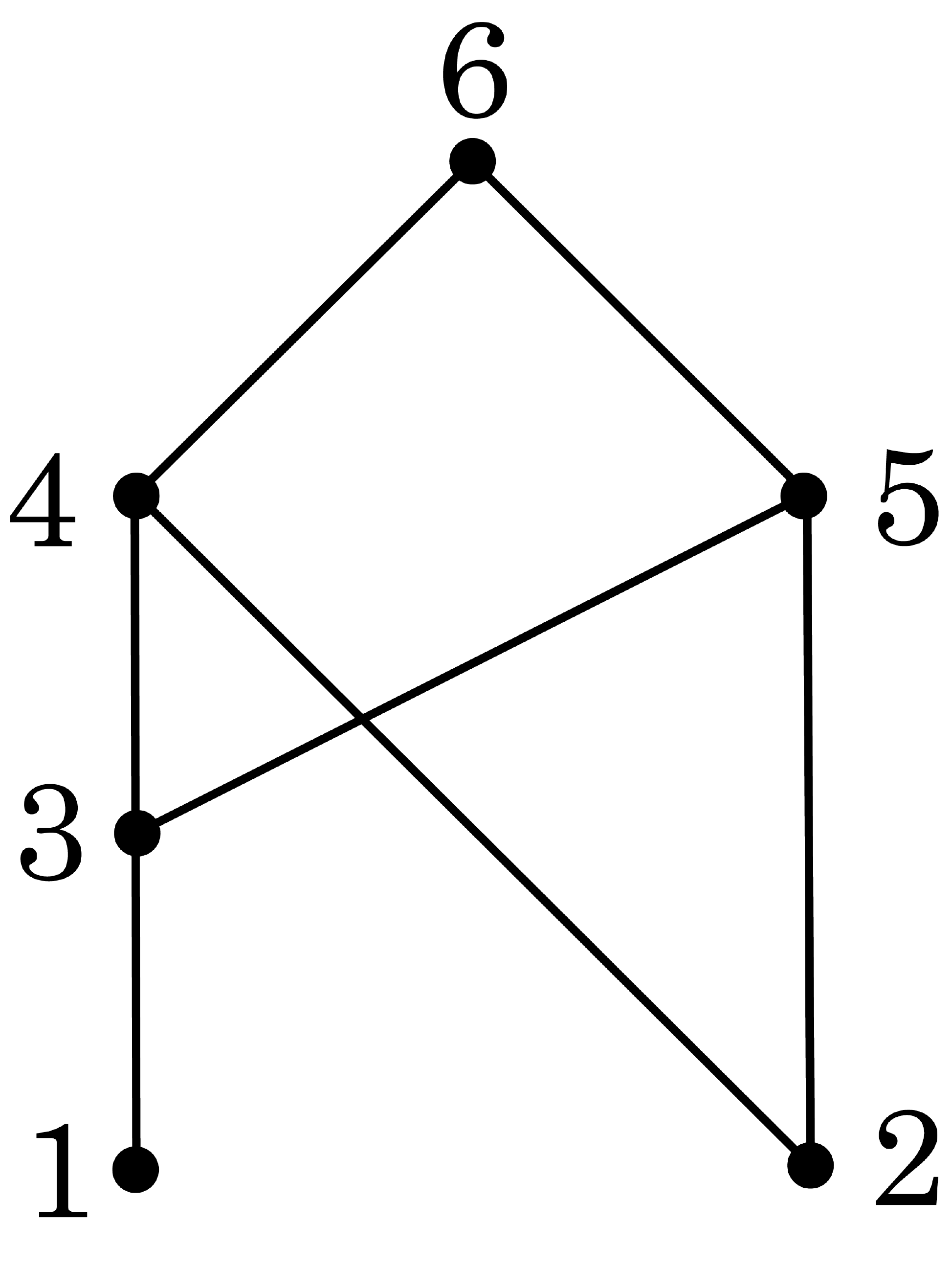} \quad \,
	\includegraphics[width = 1.2cm]{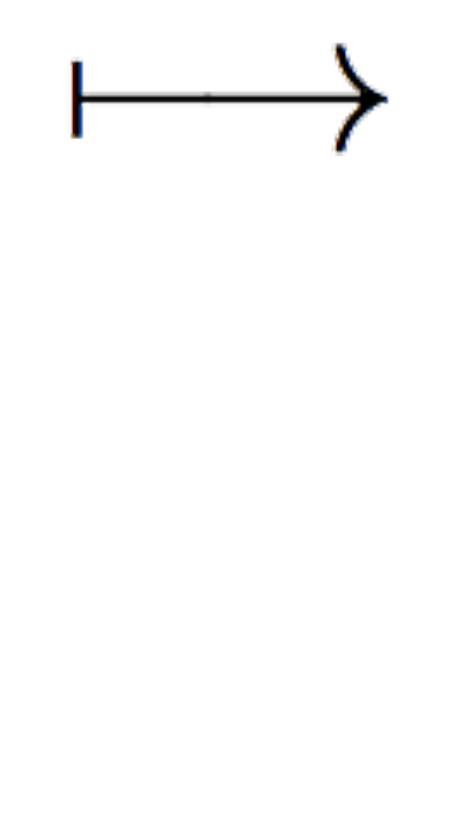} \quad \quad
	\includegraphics[width = 3.8cm]{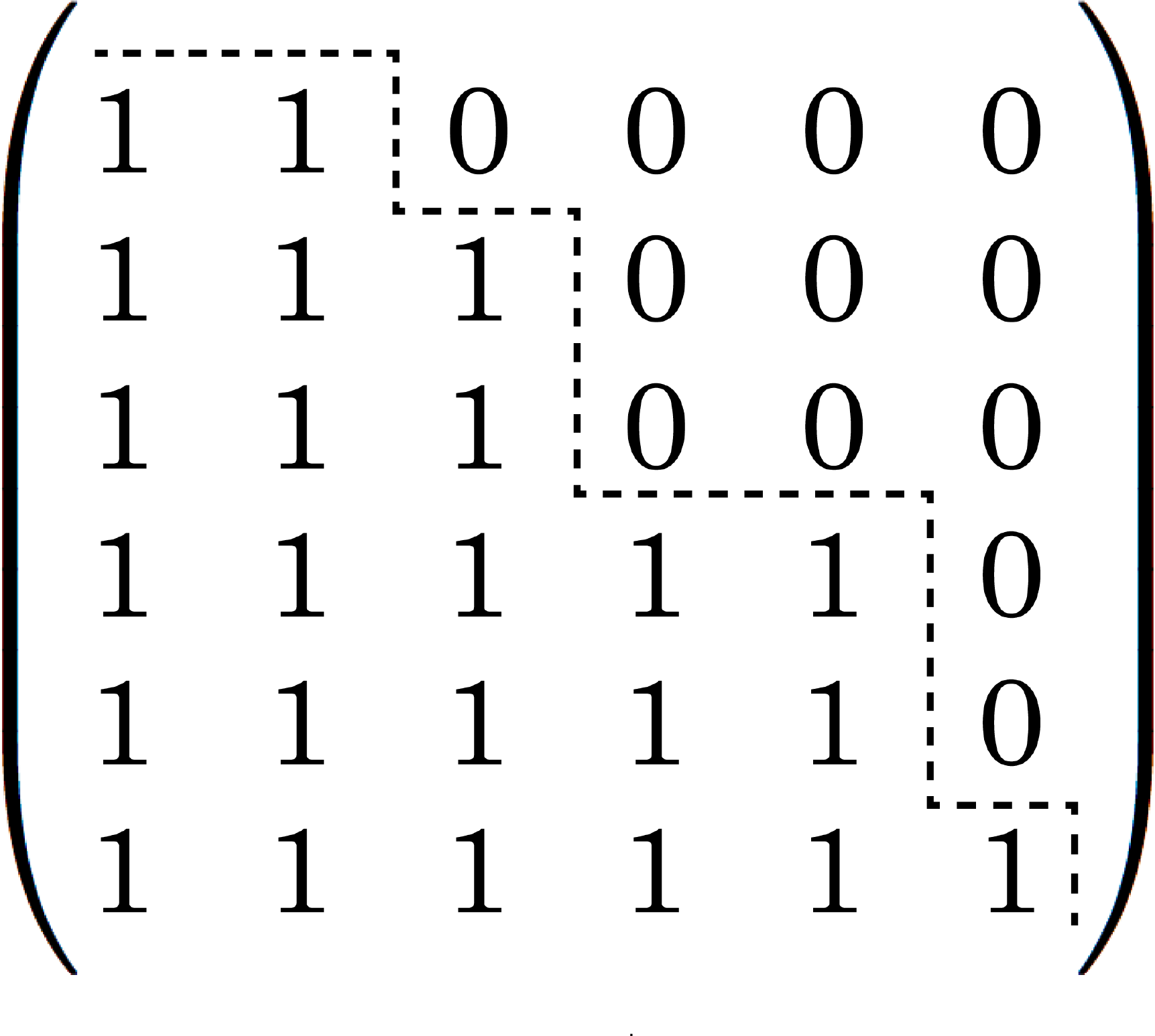}
	\caption{A canonically labeled unit interval order on $[6]$ and its antiadjacency matrix, in which one entries and zero entries are separated by a Dyck path.}
	\label{fig:UIO and its antiadjacency matrix}
\end{figure}

On the other hand, it follows from work of Postnikov \cite{aP06} that the $n \times n$ antiadjacency (Dyck) matrix of a (properly labeled) unit interval order $P$ can be regarded as representing a rank $n$ \emph{positroid} on the ground set $[2n]$. We will say that such a positroid is \emph{induced} by $P$. Positroids, which are special matroids, were introduced and classified by Postnikov in his study of the totally nonnegative part of the Grassmannian \cite{aP06}. He showed that there is a cell decomposition of the totally nonnegative part of the Grassmannian so that cells are indexed by positroids (or equivalent combinatorial objects).
Positroids and the nonnegative Grassmannian have been the subject of a great deal of recent work, with connections and applications to cluster algebras \cite{jS06}, scattering amplitudes \cite{ABCGPT16}, soliton solutions to the Kadomtsev-Petviashvili equation \cite{KW14}, and free probability \cite{ARW16}.

In this paper we characterize the positroids that arise from unit interval orders, which we call \emph{unit interval positroids}.  We show that the decorated permutations associated to rank $n$ unit interval positroids are certain $2n$-cycles in bijection with Dyck paths of length $2n$. The following theorem is a formal statement of our main result (the necessary definitions are given in Section~\ref{sec:background}).

\begin{maintheorem*}
	A decorated permutation $\pi$ represents a unit interval positroid on $[2n]$ if and only if $\pi$ is a $2n$-cycle $(1 \ j_1 \ \dots \ j_{2n-1})$ satisfying the following two conditions:
	\begin{enumerate}
		\item in the sequence  $(1, j_1, \dots, j_{2n-1})$ the elements $1,\dots,n$ appear in increasing order while the elements $n+1, \dots, 2n$ appear in decreasing order;
		\item for every $1 \le k \le 2n-1$, the set $\{1, j_1, \dots, j_k\}$ contains at least as many elements of the set $\{1,\dots,n\}$ as elements of the set $\{n+1, \dots, 2n\}$.
	\end{enumerate}
	In particular, there are $\frac{1}{n+1} \binom{2n}{n}$ unit interval positroids on $[2n]$.
\end{maintheorem*}

The decorated permutation associated to a unit interval positroid on $[2n]$ induced by a unit interval order $P$ naturally encodes a Dyck path of length $2n$. Here we provide a recipe to read this decorated permutation directly from the Dyck path appearing in the antiadjacency (Dyck) matrix $A$ of $P$. We will refer to this path as the \emph{semiorder path} of $A$.

\begin{theorem} \label{thm:reading decorated permutation from Dyck matrix}
	Let $P$ be a canonically labeled unit interval order on $[n]$, and let $A$ be its antiadjacency matrix. Number the $n$ vertical steps of the semiorder path of $A$ from bottom to top by $1,\dots,n$ and label the $n$ horizontal steps from left to right by $n+1, \dots, 2n$. Then the sequence of $2n$ labels, read in the northwest direction, is the decorated permutation associated to the unit interval positroid induced by $P$.
\end{theorem}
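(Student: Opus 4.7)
The plan is to compute the decorated permutation of the unit interval positroid induced by $P$ directly from a matrix representative of the positroid, and then match the resulting sequence with the northwest traversal of the labeled semiorder path.

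Following Postnikov's construction \cite{aP06}, I would first realize the positroid on $[2n]$ by an explicit $n \times 2n$ matrix $M$ built from the Dyck matrix $A$, namely by augmenting $A$ with a suitably signed identity block so that all maximal minors of $M$ are nonnegative and equal (up to sign) to minors of $A$. With a careful choice of column ordering and signs, the labeling of the columns of $M$ by $[2n]$ lines up with the labeling of the steps of the semiorder path in the statement: $n$ of the columns correspond to the vertical steps labeled $1, \dots, n$, while the other $n$ columns correspond to the horizontal steps labeled $n+1, \dots, 2n$.

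Second, I would compute the Grassmann necklace $(I_1, \dots, I_{2n})$ of $M$, where $I_k$ is the lex-minimum basis of the positroid in the cyclic order rooted at $k$. The Dyck shape of $A$ should pin down each $I_k$ explicitly as a staircase subset of $[2n]$ whose elements can be read off directly from the semiorder path starting at the position corresponding to $k$. From this, the decorated permutation is recovered by the standard rule: $\pi(i) = j$ whenever $I_{i+1} = (I_i \setminus \{i\}) \cup \{j\}$, with fixed points decorated appropriately (here in fact no loops or coloops will appear, since each column of $M$ participates in some nonvanishing minor).

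The core of the argument --- and where most work is needed --- is verifying that the swap element produced at each cyclic step $i \to i+1$ is precisely the next label encountered while traversing the labeled semiorder path in the northwest direction. I anticipate this reducing to a case analysis distinguishing vertical from horizontal moves of the path, making essential use of the right-justified Young-diagram shape of the zero region of $A$ (so that the swap at step $i$ is forced by the unique move available along the path). Once this local matching is established, the recipe in the statement follows at once, and as a by-product the resulting permutation is a single $2n$-cycle starting at $1$, providing a direct bridge to the Main Theorem.
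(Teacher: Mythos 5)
Your outline follows essentially the same route as the paper: it realizes the positroid by $B=\psi(A)$, extracts the decorated permutation from the Grassmann necklace via the minimality/swap rule (the paper packages this as Lemma~\ref{lem:decorated permutation image} and the explicit case formula of Proposition~\ref{prop:explicity function for decorated permutations} in terms of the weight map and principal indices, rather than writing out each necklace set), and then matches the result to the labeled path by exactly the vertical-versus-horizontal case analysis you anticipate (Theorem~\ref{thm:decorated permutations from the antiadjacency matrices of UIOs}). The plan is sound and aligned with the paper's argument; the remaining work is just carrying out the local case analysis you already identify as the core.
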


\begin{example} \label{ex:main example}
	The vertical assignment on the left of Figure~\ref{fig:visual interpretation of our main result} shows a set $\mathcal{I}$ of unit intervals along with a canonically labeled unit interval order $P$ on $[5]$ describing the order relations among the intervals in $\mathcal{I}$ (see Theorem~\ref{thm:UIO characterization}). The vertical assignment on the right illustrates the recipe given in Theorem~\ref{thm:reading decorated permutation from Dyck matrix} to read the decorated permutation $\pi = (1 \ 2 \ 1\! 0 \ 3 \ 9 \ 4 \ 8 \ 7 \ 5 \ 6)$ associated to the unit interval positroid induced by $P$ directly from the antiadjacency matrix. Note that the decorated permutation $\pi$ is a $10$-cycle satisfying conditions (1) and (2) of our main theorem. The solid and dashed assignment signs represent functions that we shall introduce later.
	\begin{figure}[h]
		\centering
		\includegraphics[width = 15cm]{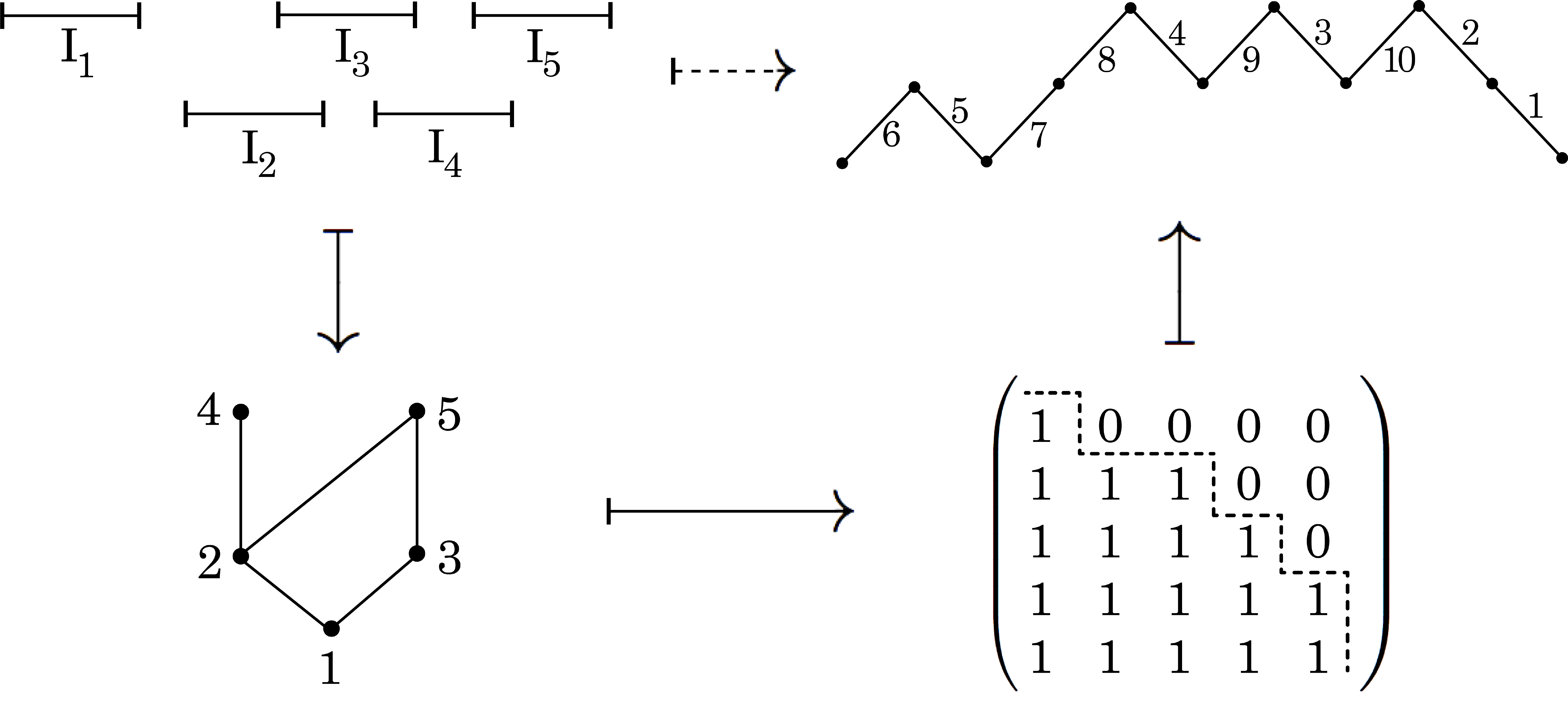}
		\caption{Following the solid assignments: unit interval representation $\mathcal{I}$, its unit interval order $P$, the antiadjacency matrix $\varphi(P)$, and the Dyck path that separates the one entries from the zero entries in $\varphi(P)$ showing the decorated permutation $\pi = (1 \ 2 \ 1\! 0 \ 3 \ 9 \ 4 \ 8 \ 7 \ 5 \ 6)$.}
		\label{fig:visual interpretation of our main result}
	\end{figure}
\end{example}

This paper is organized as follows. In Section~\ref{sec:background} we establish the notation and formally present the fundamental concepts and objects used throughout this paper. Then, in Section~\ref{sec:canonically labelings on UIO}, we formally introduce canonical labelings and canonical interval representations of unit interval orders. Also, we use canonical labelings to exhibit an explicit bijection from the set of non-isomorphic unit interval orders on $[n]$ to the set of $n \times n$ Dyck matrices. Section~\ref{sec:description of UIP} is dedicated to the description of the unit interval positroids via their decorated permutations, which yields the direct implication of the main theorem. In Section~\ref{sec:reading the UIP from the Dyck matrix}, we show how to read the decorated permutation associated to a unit interval positroid from either an antiadjacency matrix or a canonical interval representation of the corresponding unit interval order, which allows us to complete the proof of the main theorem. Then, in Section~\ref{sec:Le-diagram}, we characterize the Le-diagrams of unit interval positroids. In Section~\ref{sec:adjacency of UIP cells}, we find a necessary and sufficient condition for two unit interval positroids to index adjacent cells in the cell decomposition of the totally nonnegative part of the Grassmannian. Finally, in Section~\ref{sec:f-vector}, we interpret the $f$-vector of a poset in terms of its antiadjacency matrix and, based on this, we propose a potential approach to compute the $f$-vector of a unit interval order.

\section{Background and Notation} \label{sec:background}

For ease of notation, when $(P,\le_P)$ is a partially ordered set (\emph{poset} for short), we just write $P$, tacitly assuming that the order relation on $P$ is to be denoted by the symbol $\le_P$. For $x,y \in P$, we will write $x <_P y$ when $x \le_P y$ and $x \neq y$. In addition, every poset showing up in this paper is assumed to be finite.

\begin{definition}
	A poset $P$ is a \emph{unit interval order} if there exists a bijective map $i \mapsto [q_i, q_i+1]$ from $P$ to a set $S = \{[q_i, q_i + 1] \mid 1 \le i \le n, \, q_i \in \mathbb{R}\}$ of closed unit intervals of the real line such that for $i,j \in P$, $i <_P j$ if and only if $q_i + 1 < q_j$. We then say that $S$ is an \emph{interval representation} of $P$.
\end{definition}

For each $n \in \mathbb{N}$, we denote by $\mathcal{U}_n$ the set of all non-isomorphic unit interval orders of cardinality $n$. For nonnegative integers $n$ and $m$, let ${\bf n} + {\bf m}$ denote the poset which is the disjoint sum of an $n$-element chain and an $m$-element chain. Let $P$ and $Q$ be two posets. We say that $Q$ is an \emph{induced} subposet of $P$ if there exists an injective map $f \colon Q \to P$ such that for all $r,s \in Q$ one has $r \le_Q s$ if and only if $f(r) \le_P f(s)$. By contrast, $P$ is a $Q$-\emph{free} poset if $P$ does not contain any induced subposet isomorphic to $Q$. The following theorem provides a useful characterization of the elements of $\mathcal{U}_n$.

\begin{theorem} \cite[Theorem~2.1]{dS64} \label{thm:UIO characterization}
	A poset is a unit interval order if and only if it is simultaneously $(\bf{3}+\bf{1})$-free and $(\bf{2}+\bf{2})$-free.
\end{theorem}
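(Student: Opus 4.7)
The plan is to argue each direction separately, with the necessity direction being a short endpoint-chasing argument and the sufficiency direction being the substantive content.

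For necessity, I would assume that $P$ has a unit interval representation $i \mapsto [q_i, q_i+1]$ and show that neither forbidden subposet can be induced. Suppose $P$ contains a copy of $\mathbf{2}+\mathbf{2}$ with chains $a <_P b$ and $c <_P d$ and no other relations. WLOG $q_a \le q_c$. From $a <_P b$ and the incomparability of $b$ with $c$, the inequality $q_a + 1 < q_b \le q_c + 1$ forces $q_a < q_c$. But the incomparability of $a$ and $d$ together with $c <_P d$ gives $q_c + 1 < q_d \le q_a + 1$, i.e., $q_c < q_a$, a contradiction. For $\mathbf{3}+\mathbf{1}$, suppose we have a chain $x_1 <_P x_2 <_P x_3$ and an element $y$ incomparable to each $x_i$. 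Then $q_{x_3} > q_{x_2} + 1 > q_{x_1} + 2$, while the incomparabilities give $q_{x_3} - 1 \le q_y \le q_{x_1} + 1$, which is impossible.

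For sufficiency, I would follow the classical two-step strategy: first use $(\mathbf{2}+\mathbf{2})$-freeness to produce a (not necessarily unit) interval representation, then use $(\mathbf{3}+\mathbf{1})$-freeness to normalize lengths. For the first step, for each $x \in P$ let $D(x) = \{y \in P : y <_P x\}$ and $U(x) = \{y \in P : x <_P y\}$. The key observation is that $(\mathbf{2}+\mathbf{2})$-freeness is equivalent to the collection $\{D(x) : x \in P\}$ being totally ordered by inclusion (and similarly for $\{U(x)\}$): indeed, if $D(x)$ and $D(y)$ were incomparable, picking $u \in D(x) \setminus D(y)$ and $v \in D(y) \setminus D(x)$ produces an induced $\mathbf{2}+\mathbf{2}$ on $\{u,x,v,y\}$. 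This chain structure lets me list the distinct down-sets $\emptyset = D_0 \subsetneq D_1 \subsetneq \dots \subsetneq D_k$ and assign to each $x$ an interval whose left endpoint encodes $D(x)$ and whose right endpoint encodes $U(x)$, in a way that $x <_P y$ iff the interval of $x$ lies strictly to the left of that of $y$.

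For the second (and main) step, I would show that among interval orders the $(\mathbf{3}+\mathbf{1})$-free ones are exactly those admitting unit length representations. Define an auxiliary relation on $P$ by $x \prec y$ iff $D(x) \subsetneq D(y)$, or $D(x) = D(y)$ and $U(x) \supsetneq U(y)$; by the chain property above this is a linear extension of $P$. I would then list the elements $x_1 \prec x_2 \prec \dots \prec x_n$ and assign left endpoints $q_{x_i} \in \mathbb{R}$ inductively so that $x_i$ and $x_j$ overlap in the representation exactly when they are incomparable in $P$, and moreover so that consecutive overlapping elements have $q$-values within distance $<1$. The key input of $(\mathbf{3}+\mathbf{1})$-freeness is that any element incomparable with $x_i$ and $x_j$ must also be incomparable with every element sitting between them in the chain structure, which prevents the accumulated gap between overlapping elements from ever exceeding $1$. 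I expect this inductive gap control to be the main technical obstacle; once this is handled, the uniform length $1$ representation follows by rescaling.

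Finally, since the result is classical, I would simply cite \cite{dS64} (as the statement does) rather than reproducing the detailed construction in full, noting that the hard direction is essentially Fishburn's theorem together with Scott--Suppes normalization.
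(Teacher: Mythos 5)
The paper does not prove this statement at all: it is quoted as a classical result with the citation to Scott \cite{dS64} (the Scott--Suppes theorem), so your decision to ultimately defer to that reference matches the paper exactly. Your necessity argument is complete and correct as written: the two endpoint-chasing chains $q_a+1<q_b\le q_c+1$ versus $q_c+1<q_d\le q_a+1$ for $\mathbf{2}+\mathbf{2}$, and $q_{x_3}-1\le q_y\le q_{x_1}+1$ against $q_{x_3}>q_{x_1}+2$ for $\mathbf{3}+\mathbf{1}$, are exactly right, and this is more than the paper provides.

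Be aware, though, that if you intended the sufficiency sketch to stand on its own rather than as commentary on the citation, it has a real gap. The first step (that $(\mathbf{2}+\mathbf{2})$-freeness linearly orders the down-sets $D(x)$ by inclusion and hence yields an interval representation) is correct and standard. But the second step is where all the difficulty lives, and your description of it is not yet an argument: the claim that an element incomparable to $x_i$ and $x_j$ is incomparable to everything between them in the linear extension $\prec$ does not follow from $(\mathbf{3}+\mathbf{1})$-freeness in the form stated (the intermediate elements need not form a $P$-chain with $x_i$ and $x_j$, so no copy of $\mathbf{3}+\mathbf{1}$ is produced), and the ``inductive gap control'' that keeps consecutive overlapping left endpoints within distance $1$ is precisely the content of the Scott--Suppes theorem, which you acknowledge but do not supply. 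Also, your relation $\prec$ fails to be total when two elements have identical down-sets and up-sets, a minor point that the standard treatments handle by collapsing twins. Since the theorem is cited rather than proved in the paper, none of this affects the acceptability of your write-up, but the converse direction should not be presented as proven.
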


If the poset $P$ has cardinality $n$, then a bijective function $\ell \colon P \to [n]$ is called an $n$-\emph{labeling} of $P$; after identifying $P$ with $[n]$ via $\ell$, we say that $P$ is an $n$-\emph{labeled} poset. The $n$-labeled poset $P$ is \emph{naturally labeled} if $i \le_P j$ implies that $i \le j$ as integers for all $i,j \in P$.

\begin{example}
	The figure below depicts the $6$-labeled unit interval order introduced in Figure~\ref{fig:UIO and its antiadjacency matrix} with a corresponding interval representation.
	\begin{figure}[h] \label{fig:UIO and interval representation}
		\centering
		\includegraphics[width = 2.8cm]{HasseDiagram6} \!\!
		\raisebox{-0.1\height}{\includegraphics[width = 1.2cm]{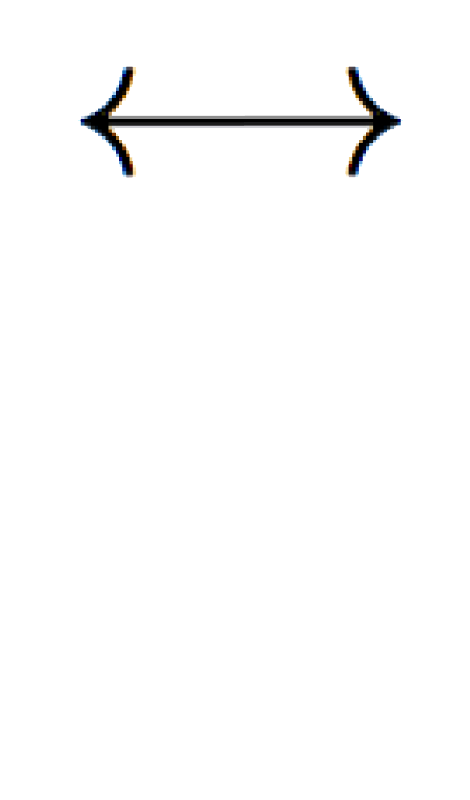}} \
		\includegraphics[width = 9.8cm]{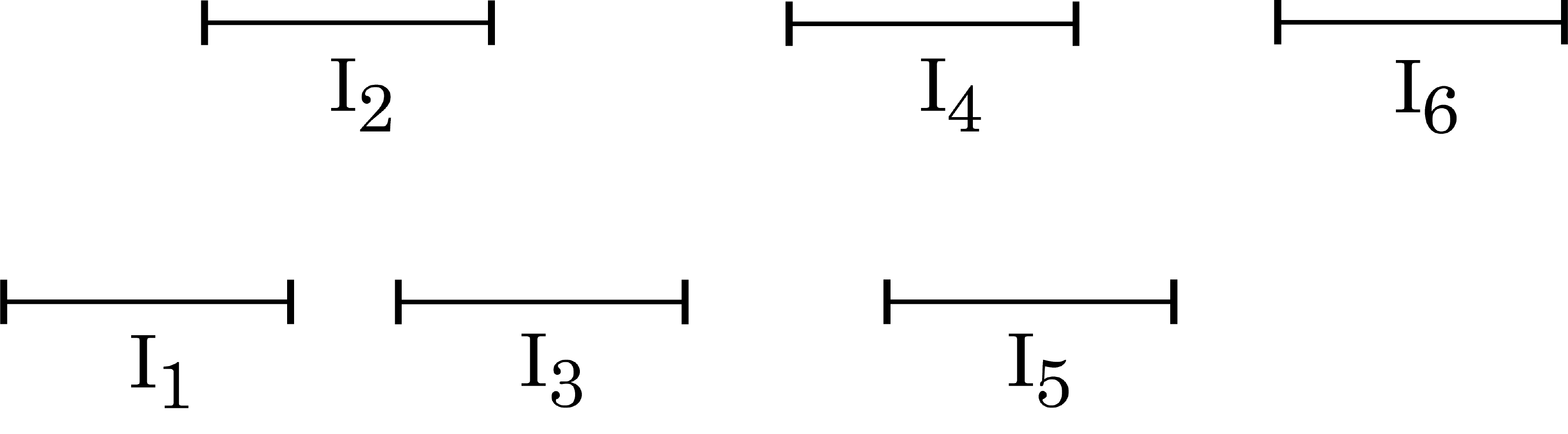}
		\caption{A $6$-labeled unit interval order and one of its interval representations.}
	\end{figure}
\end{example}

Another useful way of representing an $n$-labeled unit interval order is through its \emph{antiadjacency matrix}.

\begin{definition}
	If $P$ is an $n$-labeled poset, then the \emph{antiadjacency matrix} of $P$ is the $n \times n$ binary matrix $A = (a_{i,j})$ with $a_{i,j} = 0$ if and only if $i <_P j$.
\end{definition}

Recall that a binary square matrix $A$ is said to be a \emph{Dyck matrix} if its zero entries are separated from its one entries by a Dyck path joining the upper-left corner to the lower-right corner. We call such a Dyck path the \emph{semiorder path} of $A$. All minors of a Dyck matrix are nonnegative (see, for instance, \cite{ASW52}). We denote by $\mathcal{D}_n$ the set of all $n \times n$ Dyck matrices. As presented in \cite{SR03}, every unit interval order can be naturally labeled so that its antiadjacency matrix is a Dyck matrix (details provided in Section~\ref{sec:canonically labelings on UIO}). This yields a natural map $\varphi \colon \mathcal{U}_n \to \mathcal{D}_n$ that is a bijection (see Theorem~\ref{thm:bijection between UIOs and Dyck matrices}). In particular, $|\mathcal{D}_n|$ is the $n$-th Catalan number, which can also be deduced from the one-to-one correspondence between Dyck matrices and their semiorder (Dyck) paths.

For any matrix $A$ and any $k$-element subsets $I$, $J$ of row and column indices of $A$, define $\Delta_{I,J}(A)$, the $I$,$J$-\emph{minor of} $A$ to be the determinant of the submatrix of $A$ determined by rows $I$ and columns $J$. Let $\text{Mat}^{\ge 0}_{d,n}$ denote the set of all full rank $d \times n$ real matrices with nonnegative maximal minors. Given a totally nonnegative real $n \times n$ matrix $A$, there is a natural assignment $A \mapsto \psi(A)$, where $\psi(A) \in \text{Mat}^{\ge 0}_{n,2n}$.

\begin{lemma} \cite[Lemma~3.9]{aP06} \label{lem:correspondence between totally nonnegative square and rectangular matrix}\!\footnote{There is a typo in the entries of the matrix $B$ in \cite[Lemma~3.9]{aP06}.}
	For an $n \times n$ real matrix $A = (a_{i,j})$, consider the $n \times 2n$ matrix $B = \psi(A)$, where
	\[
		\begin{pmatrix}
			a_{1,1} 		& \dots		 & a_{1,n}     \\
			\vdots 	      & \ddots	   & \vdots     \\
			a_{n-1,1}    & \dots       & a_{n-1,n} \\
			a_{n,1}        & \dots      & a_{n,n}    
		\end{pmatrix} \
		\stackrel{\psi}{\longmapsto} \
		\begin{pmatrix}
			1  	  		&\dots   &  0        &   0  	 & (-1)^{n-1} a_{n,1} & \dots	& (-1)^{n-1} a_{n,n}  \\
			\vdots  &\ddots  &\vdots & \vdots & \vdots 	      			& \ddots & \vdots         			\\
			0  	  		&\dots   &  1        &   0  	 & -a_{2,1}  			& \dots   & -a_{2,n}    		    	\\
			0  	  		&\dots   &  0        &   1  	 & a_{1,1}        			& \dots   & a_{1,n}   
		\end{pmatrix}.
	\]
	For each pair $(I,J)$ with $I,J \subseteq [n]$ and $|I| = |J|$, define the set
	\[
		K = K(I,J) = \{n+1-k \mid k \in [n] \! \setminus \! I\} \cup \{n+j \mid j \in J\}. 
	\]
	Then we have $\Delta_{I,J}(A) = \Delta_{[n],K}(B)$.
\end{lemma}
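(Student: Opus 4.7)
The plan is to leverage the block structure $B = \psi(A) = [I_n \mid C]$, where $C$ is the $n \times n$ matrix with entries $C_{i,j} = (-1)^{n-i} a_{n+1-i,\,j}$. First I would decompose $K$ into its two natural pieces $K_1 := K \cap [n] = \{n+1-k : k \in [n] \setminus I\}$ and $K_2 := K \cap \{n+1,\dots,2n\} = \{n+j : j \in J\}$. Since $|I| = |J|$, a cardinality check gives $|K_1| + |K_2| = n$, so $\Delta_{[n], K}(B)$ is a genuine $n \times n$ determinant and the statement at least makes sense dimensionally.

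Next, I would evaluate this determinant via a Laplace expansion along the columns of $B$ indexed by $K_1$. Because the leftmost $n$ columns of $B$ form $I_n$, each $K_1$-column is a standard basis vector $e_{n+1-k}$, which forces row $n+1-k$ to be matched with it. The set of ``consumed'' rows is therefore $K_1$ itself, and the remaining rows, which must be matched against the $K_2$-columns, form $I' := [n] \setminus K_1 = \{n+1-k : k \in I\}$. Up to a sign $\varepsilon_1$ that depends only on the positions of $K_1$ within $K$ and of $I'$ within $[n]$, this expansion gives
\begin{equation*}
\Delta_{[n], K}(B) \;=\; \varepsilon_1 \cdot \det\bigl(C[I', J]\bigr).
\end{equation*}

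The final step is to translate $\det(C[I', J])$ back into a minor of $A$. Reindexing via $i = n+1-k$ and using the definition of $C$ shows that the row of $C[I', J]$ corresponding to $i \in I'$ equals $(-1)^{k-1}$ times the $J$-restricted $k$-th row of $A$. Reversing the row order so that rows are listed by increasing $k \in I$ contributes a sign $(-1)^{|I|(|I|-1)/2}$, and pulling out the alternating row scalars contributes $(-1)^{\sum_{k \in I}(k-1)}$. Thus $\det(C[I', J]) = \varepsilon_2 \cdot \Delta_{I, J}(A)$ for an explicit sign $\varepsilon_2$.

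The only real obstacle is checking $\varepsilon_1 \varepsilon_2 = 1$. I would compute $\varepsilon_1$ as the signature of the permutation that sorts the row--column matching produced by the Laplace expansion, expressed in terms of the positions of the $K_1$-columns within $K$ and of the $I'$-rows within $[n]$, and then verify combinatorially that its parity coincides with that of $|I|(|I|-1)/2 + \sum_{k \in I}(k-1)$. This is finicky but routine: it is exactly the kind of sign cancellation the alternating factors $(-1)^{n-i}$ in the definition of $\psi(A)$ were engineered to produce, so once the bookkeeping is set up carefully the parities must match term by term.
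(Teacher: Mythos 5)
The paper does not prove this lemma; it is imported verbatim from Postnikov (the authors only add a footnote correcting a typo), so there is no in-paper argument to compare against. Your approach is the standard and correct one: split $K$ into the identity-block columns $K_1$ and the $C$-block columns $K_2$, collapse the determinant along the basis vectors $e_{n+1-k}$, and track the two signs. Your identification of the surviving row set $I'=\{n+1-k : k\in I\}$ and your formula $\varepsilon_2=(-1)^{|I|(|I|-1)/2+\sum_{k\in I}(k-1)}$ are both right. The one piece you defer, $\varepsilon_1\varepsilon_2=1$, does in fact go through: writing $s=|I|$, $p=n-s$, and $K_1=\{m_1<\dots<m_p\}$, the Laplace sign is $\varepsilon_1=(-1)^{\sum_t (m_t-t)}$ with $\sum_t m_t=(n+1)p-\sum_{c\notin I}c$, and after substituting $\sum_{c\notin I}c=\tfrac{n(n+1)}{2}-\sum_{k\in I}k$ the total exponent reduces modulo $2$ to $p(n+1+s)=(n-s)(n+s+1)=n(n+1)-s(s+1)$, which is always even. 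You should include this computation (or at least the closing identity) rather than asserting that the parities ``must match,'' since the entire content of the lemma beyond dimension-counting is this sign cancellation; with that supplied, your argument is a complete and correct proof of the cited result.
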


Using Lemma~\ref{lem:correspondence between totally nonnegative square and rectangular matrix} and the aforementioned map $\varphi \colon \mathcal{U}_n \to \mathcal{D}_n$, we can assign via $\psi \circ \varphi$ a matrix of $\text{Mat}^{\ge 0}_{n,2n}$ to each unit interval order of cardinality $n$. In turn, every real matrix of $\text{Mat}^{\ge 0}_{n,2n}$ gives rise to a positroid, a special representable matroid which has a very rich combinatorial structure. Let us recall the concept of matroid.

\begin{definition}
	Let $E$ be a finite set, and let $\mathcal{B}$ be a nonempty collection of subsets of $E$. The pair $M = (E, \mathcal{B})$ is a \emph{matroid} if for all $B,B' \in \mathcal{B}$ and $b \in B \setminus B'$, there exists $b' \in B' \setminus B$ such that $(B \setminus \{b\}) \cup \{b'\} \in \mathcal{B}$.
\end{definition}

If $M = (E, \mathcal{B})$ is a matroid, then $E$ is called the \emph{ground set} of $M$ and the elements of $\mathcal{B}$ are called \emph{bases} of $M$. Any two bases of $M$ have the same size, which we denote by $r(M)$ and call the \emph{rank} of $M$. If $r(M) = d$ and $E = [n]$, then we say that $M$ is \emph{representable} if there exists a $d \times n$ real matrix $A$ with columns $A_1, \dots, A_n$ such that $B \in \mathcal{B}$ precisely when $\{A_b \mid b \in B\}$ is a basis for the vector space $\mathbb{R}^d$.

\begin{definition}
	The rank $d$ matroid on the ground set $[n]$ represented by a matrix $A \in \text{Mat}^{\ge 0}_{d,n}$ is denoted by $\rho(A)$ and called a \emph{positroid}.
\end{definition}

Each unit interval order $P$ (labeled so that its antiadjacency matrix is a Dyck matrix) induces a positroid via Lemma~\ref{lem:correspondence between totally nonnegative square and rectangular matrix}, namely, the positroid represented by the matrix $\psi(\varphi(P))$.

\begin{definition}
	A positroid on $[2n]$ induced by a unit interval order is called a \emph{unit interval positroid}.
\end{definition}

We denote by $\mathcal{P}_n$ the set of all unit interval positroids on the ground set $[2n]$. The function $\rho \circ \psi \circ \varphi \colon \mathcal{U}_n \to \mathcal{P}_n$ plays a fundamental role in this paper. Indeed, we will end up proving that this function is a bijection (see Theorem~\ref{thm:fundamental bijection}).

\vspace{4pt}
Several families of combinatorial objects, in bijection with positroids, were introduced in \cite{aP06} to study the totally nonnegative Grassmannian: decorated permutations, Grassmann necklaces, Le-diagrams, and plabic graphs. We use \emph{decorated permutations}, obtained from \emph{Grassmann necklaces}, to provide a compact and elegant description of unit interval positroids.

In the next definition subscripts should be interpreted modulo $n$.

\begin{definition}
	Let $d,n \in \mathbb{N}$ such that $d \le n$. An $n$-tuple $(I_1, \dots, I_n)$ of $d$-subsets of $[n]$ is called a \emph{Grassmann necklace} of type $(d,n)$ if for every $i \in [n]$ the following conditions hold:
	\begin{itemize}
		\item $i \in I_i$ implies $I_{i+1} = (I_i \setminus \{i\}) \cup \{j\}$ for some $j \in [n]$;
		\item $i \notin I_i$ implies $I_{i+1} = I_i$.
	\end{itemize}
\end{definition}

For $i \in [n]$, the total order $<_i$ on $[n]$ defined by $i <_i \dots <_i n <_i 1 <_i \dots <_i i-1$ is called the \emph{shifted linear $i$-order}. For a matroid $M = ([n], \mathcal{B})$ of rank $d$, one can define the sequence $\mathcal{I}(M) = (I_1, \dots, I_n)$, where $I_i$ is the lexicographically minimal ordered basis of $M$ with respect to the shifted linear $i$-order. It was proved in \cite[Section~16]{aP06} that the sequence $\mathcal{I}(M)$ is a Grassmann necklace of type $(d,n)$. We call $\mathcal{I}(M)$ the Grassmann necklace \emph{associated} to $M$. When $M$ is a positroid we can recover $M$ from its Grassmann necklace via Theorem~\ref{thm:bijection between positroids and GN}; however, this does not hold for a general matroid, which can be deduced also from Theorem~\ref{thm:bijection between positroids and GN}.

For $i \in [n]$, the \emph{Gale order} on $\binom{[n]}{d}$ with respect to $<_i$ is the partial order $\prec_i$ defined in the following way. If $S = \{s_1 <_i \dots <_i s_d\} \subseteq [n]$ and $T = \{t_1 <_i \dots <_i t_d\} \subseteq [n]$, then $S \prec_i T$ if and only if $s_j <_i t_j$ for each $j\in[d]$.

\begin{theorem}\cite[Theorem~6]{sO11} \label{thm:bijection between positroids and GN}
	For $d,n \in \mathbb{N}$ such that $d \le n$, let $\mathcal{I} = (I_1, \dots, I_n)$ be a Grassmann necklace of type $(d,n)$. Then
	\[
		\mathcal{B}(\mathcal{I}) = \bigg\{B \in \binom{[n]}{d} \ \bigg{|} \ I_j \prec_j B \ \text{for every} \ j \in [n] \bigg\}
	\]
	is the collection of bases of a positroid $M(\mathcal{I}) = ([n], \mathcal{B}(\mathcal{I}))$, where $\prec_i$ is the Gale $i$-order on $\binom{[n]}{d}$. Moreover, $M(\mathcal{I}(M)) = M$ for all positroids $M$.
\end{theorem}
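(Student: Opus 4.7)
The plan is to verify three things in sequence: that $\mathcal{B}(\mathcal{I})$ is nonempty, that it satisfies the basis exchange axiom, and that the resulting matroid is in fact a positroid whose associated Grassmann necklace is exactly $\mathcal{I}$. The ``moreover'' clause $M(\mathcal{I}(M)) = M$ will then follow by specializing the third step to a known positroid $M$.

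First I would prove the auxiliary claim that each $I_i$ lies in $\mathcal{B}(\mathcal{I})$, i.e., $I_j \prec_j I_i$ for every $i,j \in [n]$. Fixing $j$ and traversing the cyclic sequence $I_j, I_{j+1}, \ldots, I_i$, every transition $I_k \to I_{k+1}$ is either trivial or replaces the element $k$, which is forced to be the $<_k$-minimum of $I_k$, by a new index. A careful bookkeeping of the $<_j$-sorted tuple through these single-element swaps shows that the cumulative effect can only move the sorted entries $<_j$-upward, yielding $I_j \prec_j I_i$. Consequently $\mathcal{B}(\mathcal{I})$ is nonempty and each $I_i$ is in fact its $\prec_i$-minimum element.

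Next, to check basis exchange, I would take $B, B' \in \mathcal{B}(\mathcal{I})$ and $b \in B \setminus B'$ and construct $b' \in B' \setminus B$ such that $(B \setminus \{b\}) \cup \{b'\} \in \mathcal{B}(\mathcal{I})$. The strategy is to locate, for each $j$, the index at which the proposed exchange could first spoil the Gale comparison $I_j \prec_j (B \setminus \{b\}) \cup \{b'\}$, and then use the single-element transition structure of the necklace to align these potential failures across all $j$ so that one common choice of $b'$ repairs them simultaneously. I expect this to be the main obstacle, since the Gale condition must hold for \emph{every} cyclic shift $j \in [n]$ at once; the leverage comes from matching our single exchange $b \leftrightarrow b'$ against the single-element swaps that govern the necklace itself.

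Finally, to identify $M(\mathcal{I})$ with a positroid and to obtain the moreover statement, I would invoke Postnikov's construction from \cite{aP06}: every Grassmann necklace $\mathcal{I}$ arises from some $A \in \text{Mat}^{\ge 0}_{d,n}$ via the Le-diagram (or plabic graph) correspondence, so $\rho(A)$ is a positroid with $\mathcal{I}(\rho(A)) = \mathcal{I}$. One then verifies $\mathcal{B}(\rho(A)) = \mathcal{B}(\mathcal{I})$: the inclusion $\subseteq$ follows by applying the first step to $\rho(A)$, while the inclusion $\supseteq$ uses nonnegativity of all maximal minors of $A$ to argue that any candidate passing every $\prec_j$-test corresponds to a nonzero Pl\"ucker coordinate and hence to a genuine basis of $\rho(A)$. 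Specializing to $\mathcal{I} = \mathcal{I}(M)$ for an arbitrary positroid $M$ then yields $M(\mathcal{I}(M)) = M$, completing the proof.
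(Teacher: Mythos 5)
This theorem is not proved in the paper at all; it is quoted verbatim from Oh \cite{sO11} (with the surrounding machinery from Postnikov \cite{aP06}), so there is no in-paper argument to compare against. Judged on its own terms, your proposal correctly identifies the architecture of Oh's proof --- show the necklace elements lie in $\mathcal{B}(\mathcal{I})$, realize $\mathcal{I}$ as $\mathcal{I}(\rho(A))$ for some $A \in \text{Mat}^{\ge 0}_{d,n}$ via Postnikov's bijections, and prove $\mathcal{B}(\rho(A)) = \mathcal{B}(\mathcal{I})$ --- but the two steps you flag as ``the main obstacle'' and ``uses nonnegativity'' are precisely where the entire content of the theorem lives, and neither is actually argued. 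Your step 2 (direct verification of basis exchange for $\mathcal{B}(\mathcal{I})$) consists of a stated intention to ``align potential failures across all $j$'' with no mechanism; it is also logically redundant, since if step 3 succeeds then $\mathcal{B}(\mathcal{I})$ is the basis collection of a representable matroid and exchange is automatic. In step 3, the inclusion $\mathcal{B}(\mathcal{I}) \subseteq \mathcal{B}(\rho(A))$ --- that any $d$-set Gale-dominating every $I_j$ has a nonvanishing Pl\"ucker coordinate --- cannot be dispatched by invoking nonnegativity of the minors; this is false for general representable matroids and is exactly the statement Oh proves by a nontrivial induction on the structure of the associated Le-diagram/decorated permutation. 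As written, you have assumed the conclusion at the decisive moment.

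Two smaller points. First, your claim that the easy inclusion $\mathcal{B}(\rho(A)) \subseteq \mathcal{B}(\mathcal{I})$ ``follows by applying the first step'' is off: step 1 shows the $I_i$ themselves lie in $\mathcal{B}(\mathcal{I})$, whereas what you need is that \emph{every} basis $B$ of $\rho(A)$ satisfies $I_j \prec_j B$ for all $j$; this is a separate (standard, greedy-algorithm) fact that the $<_j$-lexicographically minimal basis of a matroid is actually $\prec_j$-minimal in the Gale order. Second, your step 1 bookkeeping claim ($I_j \prec_j I_i$ for all $i,j$, for an arbitrary combinatorially defined Grassmann necklace) is true and is a lemma in \cite{sO11}, but ``careful bookkeeping shows the sorted entries can only move upward'' is the whole proof and should be written out: one must track how deleting the $<_k$-minimum of $I_k$ and inserting an arbitrary element interacts with the sorted tuple in the \emph{fixed} reference order $<_j$, not in the shifting orders $<_k$.
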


Therefore there is a natural bijection between positroids on $[n]$ of rank $d$ and Grassmann necklaces of type $(d,n)$. However, \emph{decorated permutations}, also in one-to-one correspondence with positroids, will provide a more succinct representation.

\begin{definition}
	A \emph{decorated permutation} of $[n]$ is an element $\pi \in S_n$ whose fixed points $j$ are marked either ``clockwise" (denoted by $\pi(j)=\underline{j}$) or ``counterclockwise" (denoted by $\pi(j) = \overline{j}$).
\end{definition}

A \emph{weak $i$-excedance} of a decorated permutation $\pi \in S_n$ is an index $j \in [n]$ satisfying $j <_i \pi(j)$ or $\pi(j) = \overline{j}$. It is easy to see that the number of weak $i$-excedances does not depend on $i$, so we just call it the number of \emph{weak excedances}.

To every Grassmann necklace $\mathcal{I} = (I_1, \dots, I_n)$ one can associate a decorated permutation $\pi_{\mathcal{I}}$ as follows:
\begin{itemize}
	\item if $I_{i+1} = (I_i \setminus \{i\}) \cup \{j\}$, then $\pi_{\mathcal{I}}(j) = i$;
	\item if $I_{i+1} = I_i$ and $i \notin I_i$, then $\pi_\mathcal{I}(i) = \underline{i}$;
	\item if $I_{i+1} = I_i$ and $i \in I_i$, then $\pi_\mathcal{I}(i) = \overline{i}$.
\end{itemize}
The assignment $\mathcal{I} \mapsto \pi_{\mathcal{I}}$ defines a one-to-one correspondence between the set of Grassmann necklaces of type $(d,n)$ and the set of decorated permutations of $[n]$ having exactly $d$ weak excedances.

\begin{prop}\cite[Proposition 4.6]{ARW16}
	The map $\mathcal{I} \mapsto \pi_{\mathcal{I}}$ is a bijection between the set of Grassmann necklaces of type $(d,n)$ and the set of decorated permutations of $[n]$ having exactly $d$ weak excedances.
\end{prop}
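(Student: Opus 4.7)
The plan is to construct an explicit inverse to the map $\mathcal{I} \mapsto \pi_{\mathcal{I}}$. Given a decorated permutation $\pi$ of $[n]$ with $d$ weak excedances, define
\[
	J_i = \{j \in [n] : j \text{ is a weak } i\text{-excedance of } \pi\}
\]
for each $i \in [n]$. The remark preceding the proposition gives $|J_i| = d$ independently of $i$, so $\mathcal{J}(\pi) := (J_1, \dots, J_n)$ is a candidate inverse image. It suffices to show that $\mathcal{J}(\pi)$ is a Grassmann necklace of type $(d,n)$, that $\pi_{\mathcal{J}(\pi)} = \pi$, and that $\mathcal{J}(\pi_{\mathcal{I}}) = \mathcal{I}$ for every Grassmann necklace $\mathcal{I}$.

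The core step is comparing $J_i$ with $J_{i+1}$. The shifted orders $<_i$ and $<_{i+1}$ agree except for the position of $i$, which moves from smallest in $<_i$ to largest in $<_{i+1}$; fixed-point decorations do not depend on $i$. Hence membership in $J_i$ versus $J_{i+1}$ can change only at the two indices $j = i$ and $j = \pi^{-1}(i)$, and a short case split on the value and decoration of $\pi(i)$ reproduces precisely the three transition rules in the definition of $\pi_{\mathcal{I}}$: if $\pi(i) = \overline{i}$ then $i \in J_i$ and $J_{i+1} = J_i$; if $\pi(i) = \underline{i}$ then $i \notin J_i$ and $J_{i+1} = J_i$; otherwise $J_{i+1} = (J_i \setminus \{i\}) \cup \{\pi^{-1}(i)\}$, with $\pi(\pi^{-1}(i)) = i$ matching the definition. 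This simultaneously proves that $\mathcal{J}(\pi)$ is a Grassmann necklace and that $\pi_{\mathcal{J}(\pi)} = \pi$.

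For the remaining identity, fix $\mathcal{I} = (I_1, \dots, I_n)$, set $\pi = \pi_{\mathcal{I}}$, and verify $J_i = I_i$ by tracking each $k \in [n]$ through the cyclic sequence. When $\pi(k) = \overline{k}$ (resp.\ $\underline{k}$), the definition of $\pi_\mathcal{I}$ places $k$ in every $I_i$ (resp.\ in no $I_i$), matching the weak-$i$-excedance condition. Otherwise $\pi(k) = i' \neq k$, and iterating the Grassmann necklace rules shows $k \in I_i$ exactly when $i$ lies in the cyclic arc $\{i'+1, i'+2, \dots, k\}$; reading this arc condition inside the shifted order $<_i$ gives $k <_i i' = \pi(k)$, i.e., $k \in J_i$. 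The main obstacle is simply organizing the case split on fixed-point decorations and the asymmetric role of $i$ in consecutive shifted orders; no individual verification is deep.
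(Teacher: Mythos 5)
Your proof is correct. The paper itself gives no argument for this proposition---it is quoted from \cite[Proposition~4.6]{ARW16}---and your construction of the inverse map $\pi \mapsto (J_1,\dots,J_n)$ with $J_i$ the set of weak $i$-excedances, verified via the transition analysis at $j=i$ and $j=\pi^{-1}(i)$ and the cyclic-arc description of when $k\in I_i$, is exactly the standard argument given in that reference.
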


\begin{definition}
	If $P$ is a positroid and $\mathcal{I}$ is the Grassmann necklace associated to $P$, then we call $\pi_{\mathcal{I}}$ the decorated permutation \emph{associated} to $P$.
\end{definition}

\section{Canonical Labelings on Unit Interval Orders} \label{sec:canonically labelings on UIO}

In this section we introduce the concept of a \emph{canonically} labeled poset, and we use it to exhibit an explicit bijection from the set $\mathcal{U}_n$ of non-isomorphic unit interval orders of cardinality $n$ to the set $\mathcal{D}_n$ of $n \times n$ Dyck matrices.

Given a poset $P$ and $i \in P$, the \emph{order ideal} and the \emph{dual order ideal} of $i$ are defined to be $\Lambda_i = \{j \in P \mid j \le_P i\}$ and $\text{V}_i = \{j \in P \mid i \le_P j\}$, respectively. We define the \emph{altitude} function of $P$ to be the map $\alpha \colon P \to \mathbb{Z}$ defined by $i \mapsto |\Lambda_i| - |\text{V}_i|$. We say that an $n$-labeled poset $P$ \emph{respects} altitude if for all $i,j \in P$, the fact that $\alpha(i) < \alpha(j)$ implies $i < j$ (as integers). Notice that every poset can be labeled by the set $[n]$ such that, as an $n$-labeled poset, it respects altitude (see also \cite[p.~33]{pF85}).

\begin{definition}
	An $n$-labeled poset is \emph{canonically labeled} if it respects altitude.
\end{definition}

Each canonically $n$-labeled poset is, in particular, naturally labeled. The next proposition, extending \cite[proof of Theorem~2.11]{wT92}, characterizes canonically $n$-labeled unit interval orders in terms of their antiadjacency matrices.
\vspace{-2pt}
\begin{prop}\cite[Proposition~5]{SR03} \label{prop:a labeled UIO is canonical iff its antiadjacency matrix is Dyck}
	An $n$-labeled unit interval order is canonically labeled if and only if its antiadjacency matrix is a Dyck matrix.
\end{prop}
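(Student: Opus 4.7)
The plan is to prove both implications by linking the altitude function $\alpha$ of $P$ to the staircase shape of the antiadjacency matrix $A = (a_{i,j})$, and to invoke an interval representation $\{[q_k, q_k + 1] : k \in [n]\}$ of $P$ at the one step where the unit interval hypothesis is genuinely used.

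For the forward direction, I first observe that canonicality forces natural labeling: if $i <_P j$, then $\Lambda_i \subsetneq \Lambda_j$ and $\mathrm{V}_j \subsetneq \mathrm{V}_i$ (since $j \in \Lambda_j \setminus \Lambda_i$ and $i \in \mathrm{V}_i \setminus \mathrm{V}_j$), so $\alpha(i) < \alpha(j)$, whence $i < j$. This gives $a_{i,j} = 1$ for all $i \ge j$, placing every zero of $A$ strictly above the diagonal. To obtain the right-justified Young diagram shape it remains to verify two monotonicity claims: (a) $i <_P j$ and $i' < i$ imply $i' <_P j$; and (b) $i <_P j$ and $j' > j$ imply $i <_P j'$. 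For (a), I argue by contradiction: if $i' \not<_P j$, then natural labeling together with transitivity through $i$ forces $i'$ to be incomparable with both $j$ and $i$. The subcase $q_{i'} \le q_i$ yields $q_{i'} + 1 \le q_i + 1 < q_j$, hence $i' <_P j$, a contradiction; so $q_i < q_{i'} \le q_i + 1$. A direct set computation using this inequality then gives
\[
\Lambda_i \setminus \Lambda_{i'} = \{i\}, \qquad \mathrm{V}_{i'} \setminus \mathrm{V}_i = \{i'\}, \qquad \{i,j\} \subseteq \mathrm{V}_i \setminus \mathrm{V}_{i'},
\]
while $i' \in \Lambda_{i'} \setminus \Lambda_i$. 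Combining these inclusions, $|\Lambda_{i'}| \ge |\Lambda_i|$ and $|\mathrm{V}_i| \ge |\mathrm{V}_{i'}| + 1$, so $\alpha(i') > \alpha(i)$, contradicting canonicality (as $i' < i$). Claim (b) follows by the analogous count after swapping the roles of rows and columns.

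For the converse, assume $A$ is a Dyck matrix, let $f(i)$ be the number of ones in row $i$, and let $g(i)$ be the number of zeros in column $i$ plus one. Reading directly from $A$, we obtain $|\mathrm{V}_i| = n + 1 - f(i)$ and $|\Lambda_i| = g(i)$, so
\[
\alpha(i) \;=\; g(i) + f(i) - n - 1.
\]
The Dyck semiorder path separating zeros from ones in $A$ forces both $f$ and $g$ to be weakly increasing in $i$, and hence so is $\alpha$. This is equivalent to the canonical condition $\alpha(i) < \alpha(j) \Rightarrow i < j$. The main obstacle is step (a) in the forward direction: this is the only point at which the unit interval hypothesis (rather than the general $(\mathbf{3}+\mathbf{1})$- and $(\mathbf{2}+\mathbf{2})$-free assumption) is used explicitly, and careful bookkeeping of the symmetric differences of the $\Lambda$'s and $\mathrm{V}$'s is what drives the altitude inequality to a strict contradiction.
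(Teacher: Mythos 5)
Your proof is correct. Note that the paper itself offers no proof of this proposition---it is quoted from Skandera--Reed \cite[Proposition~5]{SR03} (who in turn extend an argument of Trotter)---so there is no in-paper argument to compare against; what you have written is a valid self-contained replacement. The forward direction is the substantive half, and you handle it properly: canonicality $\Rightarrow$ natural labeling puts all zeros strictly above the diagonal, and your claims (a) and (b) are exactly the column-upward and row-rightward closure properties needed to make the zero region a right-justified Young diagram anchored in the upper-right corner. The case analysis in (a) is sound: incomparability of $i'$ with both $i$ and $j$ follows from transitivity and natural labeling, the subcase $q_{i'}\le q_i$ is killed directly by the interval representation, and in the remaining case the symmetric-difference bookkeeping ($\Lambda_i\setminus\Lambda_{i'}=\{i\}$, $\mathrm{V}_{i'}\setminus\mathrm{V}_i=\{i'\}$, $\{i,j\}\subseteq\mathrm{V}_i\setminus\mathrm{V}_{i'}$) does yield $\alpha(i')\ge\alpha(i)+1$, contradicting that the labeling respects altitude. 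The converse computation $\alpha(i)=g(i)+f(i)-n-1$ with $f,g$ weakly increasing is also right, and weak monotonicity of $\alpha$ is indeed equivalent to the altitude-respecting condition. Two very minor presentational points: the dual argument for (b) deserves at least the statement of the corresponding set identities, and you use only a generic interval representation (existence is guaranteed by the definition of unit interval order), which is worth saying explicitly since Proposition~\ref{prop:interval representations of a canonically labeled UIO} later produces an \emph{ordered} one from canonicality---your argument does not circularly rely on that.
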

\vspace{-2pt}
The above proposition indicates that the antiadjacency matrices of canonically labeled unit interval orders are quite special. In addition, canonically labeled unit interval orders have very convenient interval representations.

\begin{prop} \label{prop:interval representations of a canonically labeled UIO}
	Let $P$ be an $n$-labeled unit interval order. Then the labeling of $P$ is canonical if and only if there exists an interval representation $\{[q_i, q_i + 1] \mid 1 \le i \le n\}$ of $P$ such that $q_1 < \dots < q_n$.
\end{prop}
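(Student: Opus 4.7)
The plan is to prove the two implications separately, relying on a \emph{swap lemma} to bridge equalities of altitude. For $(\Leftarrow)$, I would first establish the auxiliary claim that in any interval representation $\{[q_k, q_k + 1]\}$ of a unit interval order $P$, the inequality $q_i < q_j$ forces $\alpha(i) \le \alpha(j)$. Indeed, any $k \in \Lambda_i \setminus \{i\}$ satisfies $q_k + 1 < q_i < q_j$, hence $k \in \Lambda_j$; moreover $j \notin \Lambda_i$ since $q_i < q_j$ precludes $q_j + 1 < q_i$. A symmetric argument yields $\text{V}_j \setminus \{j\} \subseteq \text{V}_i$ and $i \notin \text{V}_j$. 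Splitting on whether $i <_P j$ or $i, j$ are incomparable, one checks in each case that $|\Lambda_j| \ge |\Lambda_i|$ and $|\text{V}_i| \ge |\text{V}_j|$, whence $\alpha(j) \ge \alpha(i)$. Consequently, if $P$ admits a representation with $q_1 < q_2 < \cdots < q_n$, then $\alpha(1) \le \alpha(2) \le \cdots \le \alpha(n)$, so the labeling respects altitude.

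For $(\Rightarrow)$, assume the labeling is canonical and start from an arbitrary interval representation $\{[q'_i, q'_i + 1]\}$ of $P$. Since the order relation depends only on strict inequalities of the form $q'_k + 1 < q'_\ell$, a small generic perturbation makes the left endpoints pairwise distinct without altering $P$. Let $\pi \in S_n$ be the permutation with $q'_{\pi(1)} < q'_{\pi(2)} < \cdots < q'_{\pi(n)}$. The auxiliary claim gives $\alpha(\pi(1)) \le \alpha(\pi(2)) \le \cdots \le \alpha(\pi(n))$, so $\pi$ lists the elements in weakly increasing altitude; since the labeling is canonical, $\pi$ agrees with the identity on altitude fibers as sets, differing at most in the internal order within each fiber.

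To close this gap I would prove the swap lemma: if $i \ne j$ satisfy $\alpha(i) = \alpha(j)$, then $i$ and $j$ are incomparable in $P$ and $\Lambda_i \setminus \{i, j\} = \Lambda_j \setminus \{i, j\}$, $\text{V}_i \setminus \{i, j\} = \text{V}_j \setminus \{i, j\}$. Incomparability holds because $i <_P j$ would force $\Lambda_j \supseteq \Lambda_i \cup \{j\}$ and $\text{V}_i \supseteq \text{V}_j \cup \{i\}$, yielding $\alpha(j) \ge \alpha(i) + 2$; the equality of neighborhoods then comes from the inclusions of the first paragraph applied to any representation with $q_i < q_j$, the hypothesis $\alpha(i) = \alpha(j)$ forcing these inclusions to be equalities. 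The swap lemma implies that transposing the intervals assigned to $i$ and $j$ still yields a valid representation of $P$. Applying such transpositions within each altitude fiber of the sorted representation, I would reorder labels inside each fiber in increasing order, producing the desired representation $\{[q_i, q_i + 1]\}$ with $q_1 < q_2 < \cdots < q_n$. The principal obstacle is the swap lemma --- proving that same-altitude elements have identical off-diagonal neighborhoods, and hence that the swap preserves $P$, is the technical heart of the argument; once it is in hand, the sort-and-rearrange procedure is routine.
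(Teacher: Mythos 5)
Your ``$\Leftarrow$'' direction is the same as the paper's (you supply the set-inclusion details behind $\alpha(m)\le\alpha(m+1)$, which the paper only asserts). Your ``$\Rightarrow$'' direction, however, is genuinely different. The paper argues extremally: among all representations it picks one maximizing the length $m$ of the initial increasing run $q_1<\dots<q_m$, shows that if $m<n$ then no endpoint of any interval can lie in certain gaps determined by $q_{m+1}$ and $q_m$, and then applies a cyclic relabeling of the intervals to extend the run, contradicting maximality. You instead sort an arbitrary representation and repair it fiber-by-fiber using the structural fact that two elements of equal altitude are incomparable with identical strict order ideals and dual order ideals. That swap lemma is correct and your derivation of it is sound: with $q_i<q_j$ and $i,j$ incomparable one gets $\Lambda_i\setminus\{i\}\subseteq\Lambda_j\setminus\{j\}$ and $\text{V}_j\setminus\{j\}\subseteq\text{V}_i\setminus\{i\}$, and $\alpha(j)-\alpha(i)=(|\Lambda_j|-|\Lambda_i|)+(|\text{V}_i|-|\text{V}_j|)=0$ forces both nonnegative summands to vanish, so the inclusions are equalities; hence the transposition $(i\ j)$ is an automorphism of $P$ and swapping the two intervals preserves the representation. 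This is arguably cleaner and more informative than the paper's surgery.

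The genuine gap is not the swap lemma but the perturbation step you dismiss as routine. A representation must realize the non-relations as well as the relations, and a non-relation $k\not<_P\ell$ is the \emph{closed} condition $q_\ell\le q_k+1$. If $q_k+1=q_\ell$ exactly (touching intervals), a generic perturbation turns this equality into $q_k+1<q_\ell$ and creates a relation not in $P$; so ``a small generic perturbation makes the left endpoints pairwise distinct without altering $P$'' is false as stated. Worse, a tied pair can be pinned on both sides: for the representation $[0,1],[1,2],[1,2],[2,3]$ of the poset with the single relation $1<_P 4$, nudging either copy of $[1,2]$ up creates $1<_P\cdot$ and nudging it down creates $\cdot<_P 4$, so no local move works — one must move several intervals at once (a separating representation does exist, e.g.\ $[0,1],[0.9,1.9],[1,2],[1.05,2.05]$). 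So you need an actual argument that every unit interval order admits a representation with pairwise distinct left endpoints (e.g., the set of valid representations is a nonempty convex polyhedron, and a convex set contained in a finite union of hyperplanes lies in one of them, which one then rules out), or you should restructure to avoid the perturbation, as the paper's extremal argument does. Once that lemma is in hand, the rest of your argument closes.
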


\begin{proof}
	Let $\alpha \colon P \to \mathbb{Z}$ be the altitude map of $P$. For the forward implication, suppose that the $n$-labeling of $P$ is canonical. Among all interval representations of $P$, assume that $\{[q_i, q_i + 1] \mid 1 \le i \le n\}$ gives the maximum $m \in [n]$ such that $q_1 < \dots < q_m$. Suppose, by way of contradiction, that $m < n$. The maximality of $m$ implies that $q_m > q_{m+1}$. This, along with the fact that $\alpha(m) \le \alpha(m+1)$, ensures that $q_m \in (q_{m+1}, q_{m+1} + 1)$. Similarly, $q_i + 1 \notin (q_{m+1}, q_m)$ for any $i \in [n]$; otherwise
	\[
		\alpha(m+1) = |\Lambda_{m+1}| - |\text{V}_{m+1}| < |\Lambda_m| - |\text{V}_{m+1}| \le |\Lambda_m| - |\text{V}_m| = \alpha(m)
	\]
	would contradict that the $n$-labeling of $P$ respects altitude. An analogous argument guarantees that  $q_i \notin (q_{m+1} + 1, q_m + 1)$ for any $i \in [n]$.
	
	Now take $k$ to be the smallest natural number in $[m]$ such that $q_j > q_{m+1}$ for all $j \ge k$, and take $\sigma = (k \, \ k\!+\!1 \ \dots \ m \, \ m\!+\!1) \in S_n$. We will show that $S = \{[p_i, p_i + 1] \mid 1 \le i \le n\}$, where $p_i = q_{\sigma(i)}$, is an interval representation of $P$. Take $i, j \in P$ such that $i \le_P j$. Since $i$ and $j$ are comparable in $P$, at least one of them must be fixed by $\sigma$; say $\sigma(i) = i$. If $\sigma(j) = j$, then $p_i + 1 = q_i + 1 < q_j = p_j$. Also, if $\sigma(j) \neq j$, then $q_i + 1 < q_j \in (q_{m+1}, q_m)$. It follows from $q_i + 1 < q_m$ that $p_i + 1 = q_i + 1 < q_{m+1} < q_{\sigma(j)} = p_j$. The case of $\sigma(j) = j$ can be argued similarly. Thus, $S$ is an interval representation of $P$. As $q_1 < \dots < q_m$, the definition of $k$ implies that $p_1 < \dots < p_{m+1}$, which contradicts the maximality of $m$. Hence $m=n$, and the direct implication follows.
	
	Conversely, note that if $\{[q_i, q_i + 1] \mid 1 \le i \le n\}$ is an interval representation of $P$ satisfying $q_1 < \dots < q_n$, then for every $m \in [n-1]$ we have
	\[
		\alpha(m) = |\Lambda_m| - |\text{V}_m| \le |\Lambda_{m+1}| - |\text{V}_{m+1}| = \alpha(m+1),
	\]
	which means that the labeling of $P$ is canonical.
\end{proof}

If $P$ is a canonically $n$-labeled unit interval order, and $\mathcal{I} = \{[q_i,q_i + 1] \mid 1 \le i \le n\}$ is an interval representation of $P$ satisfying $q_1 < \dots < q_n$, then we say that $\mathcal{I}$ is a \emph{canonical} interval representation of $P$.

Note that the image (as a multiset) of the altitude map does not depend on the labels but only on the isomorphism class of a poset. On the other hand, the altitude map $\alpha_P$ of a canonically $n$-labeled unit interval order $P$ satisfies $\alpha_P(1) \le \dots \le \alpha_P(n)$. Thus, if $Q$ is a canonically $n$-labeled unit interval order isomorphic to $P$, then
\begin{equation} \label{eq:equality of altitude vectors}
	(\alpha_P(1), \dots, \alpha_P(n)) = (\alpha_Q(1), \dots, \alpha_Q(n)),
\end{equation}
where $\alpha_Q$ is the altitude map of $Q$. Let $A_P$ and $A_Q$ be the antiadjacency matrices of $P$ and $Q$, respectively. As $\alpha_P(1) = \alpha_Q(1)$, the first rows of $A_P$ and $A_Q$ are equal. Since the number of zeros in the $i$-th column (resp., $i$-th row) of $A_P$ is precisely $|\text{V}_i(P) - 1|$ (resp., $|\Lambda_i(P)| - 1$), and similar statement holds for $Q$, the next lemma follows immediately by using \eqref{eq:equality of altitude vectors} and induction on the row index of $A_P$ and $A_Q$.

\begin{lemma} \label{lem:isomorphic canonically labeled UIO have equal Dyck matrix}
	If two canonically labeled unit interval orders are isomorphic, then they have the same antiadjacency matrix.
\end{lemma}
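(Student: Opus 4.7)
\medskip

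\noindent\emph{Proof plan.} The paragraph immediately preceding the lemma essentially prescribes the strategy: induct on the row index, using the identity \eqref{eq:equality of altitude vectors} together with the Dyck-matrix structure guaranteed by Proposition~\ref{prop:a labeled UIO is canonical iff its antiadjacency matrix is Dyck}. I would carry out this induction as follows.

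For the base case, observe that a canonical labeling is in particular a natural labeling, so the element $1$ is minimal in any canonically labeled poset: if $j <_P 1$, then by naturality $j < 1$ as integers, which is impossible. Hence $|\Lambda_1(P)| = |\Lambda_1(Q)| = 1$, and from \eqref{eq:equality of altitude vectors} we obtain $|\text{V}_1(P)| = 1 - \alpha_P(1) = 1 - \alpha_Q(1) = |\text{V}_1(Q)|$. Because $A_P$ is a Dyck matrix, the zeros in its first row are right-justified and number exactly $|\text{V}_1(P)| - 1$; the same is true for $A_Q$, so the two first rows coincide.

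For the inductive step, suppose that the first $k-1$ rows of $A_P$ and $A_Q$ agree, and consider column $k$. The Dyck-matrix property forces every entry of column $k$ in rows $\ge k$ to equal $1$, so the zeros of column $k$ occur only in rows $1, \dots, k-1$; by the inductive hypothesis these positions agree in $A_P$ and $A_Q$, yielding $|\Lambda_k(P)| = |\Lambda_k(Q)|$. Combined with the identity $\alpha_P(k) = \alpha_Q(k)$ from \eqref{eq:equality of altitude vectors}, this forces $|\text{V}_k(P)| = |\text{V}_k(Q)|$. A second application of the Dyck-matrix structure shows that the zeros in the $k$-th row are right-justified and number $|\text{V}_k(P)| - 1$, so the $k$-th rows of $A_P$ and $A_Q$ coincide and the induction closes.

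I anticipate no serious obstacle. The only slightly subtle point is that the altitude $\alpha_P(k)$ alone does not pin down $|\text{V}_k(P)|$; one has to first recover $|\Lambda_k(P)|$ from the already-determined portion of column $k$, which is exactly what the induction supplies. Once the two Dyck-matrix facts ``zeros in each row are right-justified'' and ``zeros in each column are top-justified'' are in hand, the remainder is straightforward bookkeeping on counts of zeros.
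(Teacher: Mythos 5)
Your proposal is correct and follows exactly the argument the paper sketches in the paragraph preceding the lemma: induction on the row index, recovering $|\Lambda_k|$ from the already-determined top portion of column $k$, then using $\alpha_P(k)=\alpha_Q(k)$ to get $|\text{V}_k|$ and the right-justification of zeros in a Dyck matrix to pin down row $k$. You have simply supplied the bookkeeping details the paper leaves implicit (and, incidentally, your row/column zero counts are the correct ones --- the paper's parenthetical swaps them).
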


Now we can define a map $\varphi \colon \mathcal{U}_n \to \mathcal{D}_n$, by assigning to each unit interval order its antiadjacency matrix with respect to any of its canonical labelings. By Lemma~\ref{lem:isomorphic canonically labeled UIO have equal Dyck matrix}, this map is well defined.

\begin{theorem} \label{thm:bijection between UIOs and Dyck matrices}
	For each natural number $n$, the map $\varphi \colon \mathcal{U}_n \to \mathcal{D}_n$ is a bijection.
\end{theorem}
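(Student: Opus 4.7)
The plan is to leverage the known cardinalities $|\mathcal{U}_n| = |\mathcal{D}_n| = C_n$ (the $n$-th Catalan number, as recalled in the introduction and in Section~\ref{sec:background}) to reduce the bijectivity claim to injectivity, which is then essentially definitional. Lemma~\ref{lem:isomorphic canonically labeled UIO have equal Dyck matrix} already guarantees that $\varphi$ is well defined, so only injectivity remains.

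For injectivity, suppose $P, Q \in \mathcal{U}_n$ satisfy $\varphi(P) = \varphi(Q) = A$. Then we can fix canonical labelings $\ell_P \colon P \to [n]$ and $\ell_Q \colon Q \to [n]$ whose associated antiadjacency matrices are both equal to $A$. By the very definition of the antiadjacency matrix, for all $i,j \in [n]$,
\[
    \ell_P^{-1}(i) <_P \ell_P^{-1}(j) \iff A_{ij} = 0 \iff \ell_Q^{-1}(i) <_Q \ell_Q^{-1}(j).
\]
Hence $\ell_Q^{-1} \circ \ell_P \colon P \to Q$ is an order isomorphism, so $P$ and $Q$ represent the same element of $\mathcal{U}_n$. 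Combined with $|\mathcal{U}_n| = |\mathcal{D}_n|$, this shows $\varphi$ is a bijection.

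A more constructive alternative is to exhibit the inverse explicitly. Given $D \in \mathcal{D}_n$, define a relation on $[n]$ by declaring $i <_P j$ when $D_{ij} = 0$. Antisymmetry is immediate from the zeros lying strictly above the main diagonal; transitivity follows because in each row the zeros form a right-tail interval (the rows are right-justified in the Young diagram), so $D_{ij} = 0$ and $j < k$ force $D_{ik} = 0$. A short case analysis, exploiting the fact that the zero-tails $\{j \colon D_{ij} = 0\}$ are nested as $i$ increases (rows of the Young diagram weakly shorten from top to bottom), shows that $P$ is simultaneously $({\bf 2}+{\bf 2})$-free and $({\bf 3}+{\bf 1})$-free; thus $P \in \mathcal{U}_n$ by Theorem~\ref{thm:UIO characterization}. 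Since the antiadjacency matrix of $P$ coincides with $D$ by construction, Proposition~\ref{prop:a labeled UIO is canonical iff its antiadjacency matrix is Dyck} ensures that the labeling of $P$ is canonical, and therefore $\varphi(P) = D$.

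The main technical hurdle on this constructive route is the case analysis for $({\bf 3}+{\bf 1})$-freeness: one must check each possible position of the isolated element relative to the three-element chain against the nested zero-tail structure of $D$. The injectivity-plus-counting route sidesteps this work entirely, so I would favor it for the clean write-up, while mentioning the explicit inverse since it also clarifies the forthcoming constructions.
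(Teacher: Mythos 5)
Your proposal is correct, but it proceeds quite differently from the paper: the paper's entire proof is a two-line citation, deferring to Trotter's construction of a canonical labeling together with Skandera--Reed's Proposition~5, i.e., to a direct constructive argument for surjectivity. Your primary route instead observes that injectivity is essentially definitional (equal Dyck matrices force identical strict order relations on $[n]$ under the chosen canonical labelings, hence isomorphic posets --- this is right, and Lemma~\ref{lem:isomorphic canonically labeled UIO have equal Dyck matrix} handles well-definedness) and then closes the argument by counting, using $|\mathcal{U}_n| = |\mathcal{D}_n| = \frac{1}{n+1}\binom{2n}{n}$. Both enumerations are treated as known in the paper (Wine--Freund for $\mathcal{U}_n$, the Dyck-path correspondence for $\mathcal{D}_n$), so the counting route is legitimate in context; what it costs you is that the theorem then rests on the nontrivial external fact $|\mathcal{U}_n| = C_n$, whereas the constructive route proves surjectivity directly and thereby yields that enumeration as a corollary rather than consuming it as input (note the paper's introduction presents the Catalan count of $\mathcal{D}_n$ as something that ``can also be deduced'' independently --- your argument must use that independent deduction, not the one via $\varphi$, to avoid circularity, and you do). Your sketched alternative (building the inverse by declaring $i <_P j$ when $D_{ij}=0$ and verifying transitivity and $(\mathbf{2}+\mathbf{2})$- and $(\mathbf{3}+\mathbf{1})$-freeness from the nested zero-tails) is essentially what the cited references carry out; as written it is only a sketch --- the $(\mathbf{3}+\mathbf{1})$-free case analysis is explicitly left undone --- so it would not stand alone, but since you rely on the counting route for the actual proof, this is not a gap.
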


\begin{proof}
	It follows by combining the proof of \cite[Ch.~8, Proof of Thm.~2.11]{wT92} and \cite[Prop.~5]{SR03}.
\end{proof}

\section{Description of Unit Interval Positroids} \label{sec:description of UIP}

 We proceed to describe the decorated permutation associated to a unit interval positroid. Throughout this section $A$ is an $n \times n$ Dyck matrix and $B = (b_{i,j}) = \psi(A)$ is as in Lemma~\ref{lem:correspondence between totally nonnegative square and rectangular matrix}. We will consider the indices of the columns of $B$ modulo $2n$. Furthermore, let $P$ be the unit interval positroid represented by $B$, and let $\mathcal{I}_P$ and $\pi^{-1}$ be the Grassmann necklace and the decorated permutation associated to $P$.

\begin{lemma} \label{lem:removing a column of B does not affect its rank}
	For $1 < i \le 2n$, the $i$-th coordinate set of $\mathcal{I}_P$ does not contain $i-1$.
\end{lemma}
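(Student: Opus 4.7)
The plan is to restate the lemma as a rank statement about $B$ and then dispatch it with a short case analysis. By construction, $I_i$ is the greedy lex-minimum basis of $P$ with respect to $<_i$: examine the column indices of $B$ in the order $i <_i i+1 <_i \cdots <_i i-1$ and keep each column that is linearly independent of the previously selected ones. Since $i-1$ is the $<_i$-maximum, it is added to $I_i$ only when the $2n-1$ remaining columns already fail to span $\mathbb{R}^n$. Hence the lemma is equivalent to the assertion that for every $c \in [2n-1]$, deleting column $c$ of $B$ does not decrease its rank.

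I would then split into two cases based on the block structure of $B$ exhibited in Lemma~\ref{lem:correspondence between totally nonnegative square and rectangular matrix}. If $c \in \{n+1,\dots,2n-1\}$ there is nothing to do, because the first $n$ columns of $B$ already form the standard basis $e_1,\dots,e_n$ of $\mathbb{R}^n$. If $c \in \{1,\dots,n\}$, deleting column $c$ removes the standard vector $e_c$; the other ``identity'' columns still span the hyperplane $\{x_c = 0\}$, so it is enough to exhibit one column among the last $n$ columns of $B$ whose $c$-th entry is nonzero. From the formula in Lemma~\ref{lem:correspondence between totally nonnegative square and rectangular matrix}, the $c$-th entry of the column of $B$ indexed by $n+j$ is a signed copy of the entry $a_{n-c+1,\,j}$ of $A = \varphi(P)$, so it suffices to find a nonzero entry in row $n-c+1$ of $A$.

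This is where the hypothesis on $A$ enters: since $A$ is a Dyck matrix, its zero entries lie \emph{strictly} above the main diagonal, so every diagonal entry equals $1$. Taking $j = n-c+1$ thus produces the required nonzero entry and closes the case.

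There is no substantive obstacle in this argument; the only care needed is to keep straight the index translation between the two halves of $B$ and the Dyck matrix $A$, and to observe that the greedy basis avoids the $<_i$-maximum $i-1$ precisely when the rank condition above holds.
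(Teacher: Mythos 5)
Your proof is correct and follows the same route as the paper: reduce the lemma to the claim that deleting any single column of $B$ preserves rank $n$, then invoke the $<_i$-minimality of the $i$-th coordinate of $\mathcal{I}_P$ (with $i-1$ being $<_i$-maximal). The only difference is that the paper dismisses the rank claim as ``not hard to verify,'' whereas you actually supply the verification, correctly using the identity block for columns in $\{n+1,\dots,2n-1\}$ and the nonvanishing diagonal of the Dyck matrix $A$ for columns in $\{1,\dots,n\}$.
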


\begin{proof}
	It is not hard to verify that every matrix resulting from removing one column from $B$ still has rank $n$. As the matrix obtained by removing the $(i-1)$-st column from $B$ has rank $n$, it contains $n$ linearly independent columns. Therefore the lemma follows straightforwardly from the $<_i$-minimality of the $i$-th coordinate set of $\mathcal{I}_P$.
\end{proof}

For the rest of this section let $B_j$ denote the $j$-th column of $B$. As a direct consequence of Lemma~\ref{lem:removing a column of B does not affect its rank}, we have that $\pi$ does not have any counterclockwise fixed point. On the other hand, $\pi$ cannot have any clockwise fixed point because every column of $B$ is nonzero. Hence $\pi$ (and therefore $\pi^{-1}$) does not fix any point. The next lemma immediately follows from the way $\pi^{-1}$ is produced from the Grassmann necklace $\mathcal{I}_P$ (see the end of Section~\ref{sec:background}).

\begin{lemma} \label{lem:decorated permutation image}
	For $i \in \{1,\dots,2n\}$, $\pi(i)$ equals the minimum $j \in [2n]$ with respect to the $i$-order such that $B_i \in \emph{span}(B_{i+1},\dots, B_j)$.
\end{lemma}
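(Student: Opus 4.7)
My plan is to express each coordinate set of the Grassmann necklace $\mathcal{I}_P$ as a set of \emph{pivot positions} in the rank function of $B$ read in shifted order, and then to read off $\pi(i)$ from a direct comparison between the rank functions associated with $<_i$ and $<_{i+1}$. For $k, j \in [2n]$, set $r_k(j) := \dim \text{span}(B_\ell : \ell \le_k j)$ and let $j^-$ denote the $<_k$-predecessor of $j$ (with the convention $r_k(j^-) := 0$ when $j$ is the $<_k$-minimum). Since $I_k$ is the $<_k$-lex-minimal basis of the rank-$n$ matroid represented by $B$, a standard greedy argument shows that $j \in I_k$ if and only if $r_k(j) > r_k(j^-)$; equivalently, $I_k$ is precisely the set of indices at which the rank strictly increases when columns of $B$ are scanned in $<_k$-order.

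The key technical step is the comparison between $r_i$ and $r_{i+1}$. Let $j^\ast$ denote the $<_i$-smallest index $j \neq i$ such that $B_i \in \text{span}(B_{i+1}, \dots, B_j)$; this exists because $B$ has rank $n$. For every $j \neq i$, the difference $r_i(j) - r_{i+1}(j)$ equals $1$ exactly when $B_i \notin \text{span}(B_{i+1}, \dots, B_j)$ and equals $0$ otherwise, so by the choice of $j^\ast$ this difference is $1$ for $j <_i j^\ast$ and $0$ for $j \ge_i j^\ast$. In particular, for every $j \notin \{i, j^\ast\}$ the jump $r_k(j) - r_k(j^-)$ takes the same value for $k = i$ and $k = i+1$, so such $j$ are pivots for $<_i$ exactly when they are pivots for $<_{i+1}$.

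It remains to analyze $j = j^\ast$, which I expect to be the only delicate point. If $B_{j^\ast}$ were linearly independent of $B_i, B_{i+1}, \dots, B_{(j^\ast)^-}$ (in $<_i$-order), then $r_{i+1}$ would have to jump by $2$ at $j^\ast$, an impossibility. Hence the only consistent scenario is $r_i(j^\ast) = r_i((j^\ast)^-)$ and $r_{i+1}(j^\ast) = r_{i+1}((j^\ast)^-) + 1$, so $j^\ast \notin I_i$ but $j^\ast \in I_{i+1}$. Combined with $i \in I_i$ (absence of clockwise fixed points of $\pi$) and $i \notin I_{i+1}$ (Lemma~\ref{lem:removing a column of B does not affect its rank}), this forces $I_{i+1} = (I_i \setminus \{i\}) \cup \{j^\ast\}$, and the rule producing $\pi^{-1}$ from $\mathcal{I}_P$ then yields $\pi(i) = j^\ast$, as claimed. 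The main obstacle is precisely the rank-increase-by-$2$ exclusion at $j^\ast$; everything else amounts to careful bookkeeping on the rank functions $r_i$ and $r_{i+1}$ and on the greedy characterization of $I_k$.
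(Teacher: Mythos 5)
Your argument is correct and fills in exactly the step the paper treats as ``immediate'': the coordinates of the Grassmann necklace are the greedy (rank-jump) pivot sets for the shifted orders, and comparing the rank functions for $<_i$ and $<_{i+1}$ shows the two bases exchange precisely the pair $\{i, j^\ast\}$, which by the recipe $\mathcal{I}\mapsto\pi_{\mathcal{I}}$ gives $\pi(i)=j^\ast$. One small imprecision: the existence of $j^\ast$ does not follow from $B$ having rank $n$ alone, but from the fact (used in the proof of Lemma~\ref{lem:removing a column of B does not affect its rank}) that deleting any single column of $B$ still leaves a rank-$n$ matrix, so that $B_i$ lies in the span of the remaining columns.
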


Now we find an explicit expression for the function representing the inverse $\pi$ of the decorated permutation $\pi^{-1}$ associated to $P$. In order to do so, we will find it convenient to associate an index set and a map to the matrix $B$. We define the \emph{set of principal indices} of $B$ to be the subset of $\{n+1, \dots, 2n\}$ defined by
\[
	J = \{j \in \{n+1, \dots, 2n\} \mid B_j \neq B_{j-1}\}.
\]
We associate to $B$ the \emph{weight} map $\omega \colon [2n] \to [n]$ defined by $\omega(j) = \max\{i \mid b_{i,j} \neq 0\}$; more explicitly, we obtain that
\[
	\omega(j) = \twopf{j}{j \in \{1,\dots,n\}}{|b_{1,j}| + \dots + |b_{n,j}|}{j \in \{n+1, \dots, 2n\}.}
\]
Since the last row of the antiadjacency matrix $A$ has all its entries equal to $1$, the map $\omega$ is well defined. If $j \in \{n+1,\dots, 2n\}$, then $\omega(j)$ is the number of nonzero entries in the column~$B_j$. Now we have the following formula for $\pi$.

\begin{prop} \label{prop:explicity function for decorated permutations}
	 For $i \in \{1,\dots,2n\}$, we have
	\[
		\pi(i) = \fivepf{i+1}{n < i < 2n \text{ and } i+1 \notin J,}{\omega(i)}{n < i < 2n \text{ and } i+1 \in J, \text{ or } i=2n,}{n+1}{i-1}{1 < i \le n \text{ and } i-1 \notin \omega(J),}{j}{1 < i \le n \text{ and } i-1 = \omega(j) \text{ for some } j \in J.}
	\]
	The index $j$ in the final case is necessarily unique.
\end{prop}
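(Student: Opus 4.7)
The plan is to apply Lemma~\ref{lem:decorated permutation image}, reducing the claim to a linear-algebraic question: for each $i$, find the $<_i$-minimal $j$ such that $B_i \in \mathrm{span}(B_{i+1},\dots,B_j)$. The first step is to write each column of $B$ explicitly. Since $A$ is a Dyck matrix, column $k$ of $A$ has its $1$'s exactly in rows $r_k, r_k+1, \dots, n$ with $r_k = \min\{r : a_{r,k}=1\}$, and the Dyck-path shape forces $r_1=1$ and $r_1 \le \dots \le r_n$. Reading off $B = \psi(A)$ then yields $B_j = e_j$ for $j \in [n]$ and
\[
B_{n+k} \;=\; \sum_{l=1}^{\omega(n+k)} (-1)^{n-l}\, e_l \qquad (k \in [n]),
\]
because the $i$-th entry of $B_{n+k}$ is $(-1)^{n-i} a_{n+1-i,k}$, which is nonzero exactly for $i \le n+1-r_k = \omega(n+k)$. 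Two remarks are used throughout: $\omega$ is nonincreasing on $\{n+1,\dots,2n\}$ and strictly decreasing on $J$ (hence injective on $J$, which yields the uniqueness claim in the last case), and the columns $B_{n+k}$ are nested truncations of $\sum_l (-1)^{n-l} e_l$, so the span of any consecutive block of them is determined by a single number, the maximum value of $\omega$ on that block.

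Armed with this, the first three cases are short. When $n<i<2n$ and $i+1 \notin J$, the definition of $J$ gives $B_{i+1}=B_i$, so $\pi(i)=i+1$. When $i=1$, column~$1$ of $A$ is all $1$'s, so the top entry of $B_{n+1}$ is $\pm 1$; combined with $B_2,\dots,B_n = e_2,\dots,e_n$, this places $e_1$ in $\mathrm{span}(B_2,\dots,B_{n+1})$ but not in $\mathrm{span}(B_2,\dots,B_n)$, giving $\pi(1)=n+1$. When $n<i<2n$ and $i+1 \in J$, or $i=2n$, the nested-truncation observation confines $\mathrm{span}(B_{i+1},\dots,B_{2n})$ inside $\mathrm{span}(e_1,\dots,e_{\omega(i+1)})$ with $\omega(i+1) < \omega(i)$ (the interval is empty when $i=2n$), so $B_i$ cannot yet be in the span; continuing in $<_i$-order into $B_1=e_1,B_2=e_2,\dots$, the missing coordinate $e_{\omega(i)}$ first appears when we adjoin $B_{\omega(i)}$, forcing $\pi(i) = \omega(i)$.

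The two cases with $1 < i \le n$ are dual and constitute the main obstacle, because we must track the residue of $e_i = B_i$ through the entire tail $B_{i+1},\dots,B_{2n},B_1,\dots,B_{i-1}$ under the cyclic $<_i$-order. Positing a candidate relation $e_i = \sum_{l \ne i} c_l e_l + \sum_{k'=1}^{t} d_{k'} B_{n+k'}$, where $c_l$ is only allowed for $l$ corresponding to columns already adjoined, and reading row by row, the equations telescope to a single nontrivial requirement: $\sum_{k' \le t : \omega(n+k') = i-1} d_{k'} = (-1)^{n-i+1}$, with vanishing conditions at lower levels that become automatic once the window is large enough. Solvability of this level-$(i{-}1)$ equation is therefore equivalent to the window reaching some $k'$ with $\omega(n+k') = i-1$; by injectivity of $\omega|_J$ and monotonicity of $\omega$, such a $k'$ exists precisely when the window has reached the unique $j \in J$ with $\omega(j) = i-1$, in which case $\pi(i) = j$. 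Otherwise no $B_{n+k'}$ supplies the required level, and one must continue into $B_1,B_2,\dots$ until adjoining $B_{i-1} = e_{i-1}$ itself, yielding $\pi(i) = i-1$. For each candidate $\pi(i)$, one has to verify both "$B_i$ in the span at $\pi(i)$" and "$B_i$ not in the span strictly before $\pi(i)$"; the nested structure of the $B_{n+k}$ reduces the second direction to a single-coordinate check on the leading row controlled by $\omega(i)$ or $i-1$.
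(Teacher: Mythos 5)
Your proposal is correct, and it takes a genuinely different route from the paper. The paper never writes the columns of $B$ out explicitly; instead it works directly with the Grassmann necklace $\mathcal{I}_P$, using $<_i$-minimality together with rank arguments on submatrices of $B$ (e.g.\ that the submatrix on rows $\{\omega(i+1)+1,\dots,\omega(i)\}$ and columns $\{n+1,\dots,2n\}$ has rank $1$) to pin down which indices lie in the $i$-th coordinate set $I_i$, and then deduces each value $\pi(i)$ by contradiction: any other value would leave a zero row in the submatrix indexed by $I_{i+1}$, contradicting that $I_{i+1}$ is a basis. You instead push everything through Lemma~\ref{lem:decorated permutation image} after computing $B_j = e_j$ for $j\in[n]$ and $B_{n+k}=\sum_{l\le \omega(n+k)}(-1)^{n-l}e_l$, so that the columns $B_{n+1},\dots,B_{2n}$ are nested truncations of a single alternating vector; span membership then reduces to the telescoping coefficient system you describe, whose sole obstruction sits at level $i-1$ (equivalently, at the top coordinate $e_{\omega(i)}$ in the cases $n<i\le 2n$). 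I checked the key step: with $e_1,\dots,e_m$ unavailable for $m<i-1$, row $i$ forces $S_{\ge i}=(-1)^{n-i}$ and row $i-1$ forces $S_{\ge i-1}=0$, hence $S_{=i-1}=(-1)^{n-i+1}$, which is solvable exactly when the window contains a column of weight $i-1$; the lower-level constraints are met by zeroing the remaining group sums, and the first column of weight $i-1$ lies in $J$ and is unique there since $\omega|_J$ is strictly decreasing. Your version is more elementary and self-contained (pure linear algebra on an explicit matrix, no necklace bookkeeping beyond Lemma~\ref{lem:decorated permutation image}), at the cost of having to solve the coefficient system in the two cases $1<i\le n$; the paper's version is shorter per case but leans on less transparent basis-exchange/contradiction arguments. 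One cosmetic remark: the vanishing conditions at levels below $i-1$ are not so much ``automatic once the window is large enough'' as always satisfiable by choosing the corresponding group sums to vanish; stating it that way would make the solvability criterion cleaner.
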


\begin{proof}
	First, suppose that $n < i < 2n$ and $i+1 \notin J$. Then we have $B_i = B_{i+1}$ and the set $\{B_i,B_{i+1}\}$ is linearly dependent. Lemma~\ref{lem:decorated permutation image} then implies that $\pi(i) = i+1$.
	
	Now suppose that $n < i < 2n$ and $i+1 \in J$. Then $B_{i+1}$ results from replacing $m$ ($m > 0$) of the last nonzero entries of $B_i$ by zeros. Since $i+1 \in J$, the indices $i$ and $i+1$ both appear in the $i$-th coordinate set of $\mathcal{I}_P$. Also, because the columns $B_i, B_{i+1}, B_{\omega(i+1) + 1}, \dots, B_{\omega(i)}$ are linearly dependent, not all the indices $\omega(i+1) + 1, \dots, \omega(i)$ appear in the $i$-th coordinate set of $\mathcal{I}_P$. On the other hand, at most one index in $\omega(i+1) + 1, \dots, \omega(i)$ is missing from the $i$-th coordinate of $\mathcal{I}_P$; this is because the submatrix of $B$ determined by the row-index set $\{\omega(i+1) + 1, \dots, \omega(i)\}$ and the column-index set $\{n+1, \dots, 2n\}$ has rank $1$. By the minimality of the $i$-th coordinate set of $\mathcal{I}_P$ with respect to the $i$-order, the index of $\{\omega(i+1) + 1, \dots, \omega(i)\}$ missing in the $i$-th coordinate set of $\mathcal{I}_P$ is $\omega(i)$. As a result, we have $\pi(i) = \omega(i)$; otherwise, in the submatrix of $B$ whose columns are indexed by the $(i+1)$-st coordinate set of $\mathcal{I}_P$, the $\omega(i)$-th row would consist entirely of zeros, which, in turn, would contradict the fact that such a coordinate set represents a basis of the positroid $P$.
	
	The above argument also applies when $i = 2n$ provided that we extend the domain of $\omega$ to $[2n+1]$ and set $\omega(2n+1) = 0$.
	
	Note that $\pi(1) = n+1$ follows immediately from the minimality of the second coordinate set of $\mathcal{I}_P$ and the fact that $B_2, \dots, B_n, B_{n+1}$ are linearly independent.
	
	Now suppose that $1 < i \le n$ and $i-1 \notin \omega(J)$. The minimality of the coordinate sets of $\mathcal{I}_P$ implies that all the indices $i, \dots, n$ appear in the $i$-th coordinate set. Furthermore, Lemma~\ref{lem:removing a column of B does not affect its rank} implies that $i-1$ does not belong to the $i$-th coordinate set of $\mathcal{I}_P$. Since no $j \in J$ has weight $i-1$, the $(i-1)$-st and $i$-th rows of the maximal submatrix of $B$ determined by the column index set $\{n+1, \dots, 2n\}$ are equal. Consequently, we have $\pi(i) = i-1$; otherwise the associated maximal submatrix of $B$ determined by the indices of the $i$-th coordinate set of $\mathcal{I}_P$ would have the $i$-th and $(i+1)$-st rows identical, which would contradict the fact that the $i$-th coordinate set of $\mathcal{I}_P$ represents a basis of $P$.
	
	Finally, suppose that $1 < i \le n$ and $i-1 \in \omega(J)$. Since not two elements of $J$ have the same weight, there is at most one $j \in J$ such that $\omega(j) = i-1$. As before, all the indices $i, \dots, n+1$ appear in the $i$-th coordinate set of $\mathcal{I}_P$ (because $i>1$). Each column $B_k$, for $n < k \le 2n$ such that $\omega(k) = i-1$, is a linear combination of the columns $B_i, \dots, B_{n+1}$. Therefore such indices $k$ do not appear in the $i$-th coordinate set of $\mathcal{I}_P$. By Lemma~\ref{lem:removing a column of B does not affect its rank}, it follows that $i-1$ does not appear in the $i$-th coordinate set of $\mathcal{I}_P$. Thus, $\pi(i) = j$, where $j \in [2n]$ satisfies that $\omega(j) = i-1$; otherwise, in the submatrix of $B$ whose columns are indexed by the $(i+1)$-st coordinate set of $\mathcal{I}_P$, the $(i-1)$-st row would consist entirely of zeros, which would contradict that the $(i-1)$-st coordinate set of $\mathcal{I}_P$ represents a basis of $P$. By minimality of the $(i+1)$-st coordinate set of $\mathcal{I}_P$ one finds that $j \in J$.
\end{proof}

As the next theorem indicates, $\pi^{-1}$ is a $2n$-cycle satisfying a very special property.

\begin{theorem} \label{thm:main result}
	$\pi^{-1}$ is a $2n$-cycle $(1 \ j_1 \ \dots \ j_{2n-1})$ satisfying the next two conditions:
	\begin{enumerate}
		\item in the sequence  $(1, j_1, \dots, j_{2n-1})$ the elements $1,\dots,n$ appear in increasing order while the elements $n+1, \dots, 2n$ appear in decreasing order;
		\item for every $1 \le k \le 2n-1$, the set $\{1, j_1, \dots, j_k\}$ contains at least as many elements of the set $\{1,\dots,n\}$ as elements of the set $\{n+1, \dots, 2n\}$.
	\end{enumerate}
\end{theorem}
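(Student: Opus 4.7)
The plan is to invert the five-case formula of Proposition~\ref{prop:explicity function for decorated permutations} to obtain an explicit description of $\pi^{-1}$, and then trace the $\pi^{-1}$-orbit of $1$ directly. Setting $m_+(s) := \max\{m \in \{n+1,\dots,2n\} : \omega(m)=s\}$ whenever $s \in \omega(J)$, the inversion yields five rules: $\pi^{-1}(n+1)=1$; $\pi^{-1}(j)=j-1$ for $n+1 < j \le 2n$ with $j \notin J$; $\pi^{-1}(j)=\omega(j)+1$ for $j \in J$ with $j > n+1$; $\pi^{-1}(s)=s+1$ for $1 \le s < n$ with $s \notin \omega(J)$; and $\pi^{-1}(s)=m_+(s)$ for $s \in \omega(J)$. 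These rules cover $\{1,\dots,2n\}$ in disjoint cases and therefore determine $\pi^{-1}$ completely.

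Next I would exploit the Dyck-matrix structure of $A$: the weight map $\omega$ is weakly decreasing on $\{n+1,\dots,2n\}$ with $\omega(n+1)=n$ (column $1$ of $A$ has no zeros by the canonical labeling), so this set partitions into consecutive constancy blocks of $\omega$, one per value $s \in \omega(J) \subseteq \{1,\dots,n\}$. The crucial structural fact is that within each $\omega$-block only the leftmost index lies in $J$, since $B_j=B_{j-1}$ at every other element of the block. Combining this with the inverted rules forces the orbit starting from any small $s$ to proceed as
\[
    s \ \longrightarrow\ m_+(s)\ \longrightarrow\ m_+(s){-}1\ \longrightarrow\ \cdots\ \longrightarrow\ \min \omega^{-1}(s)\ \longrightarrow\ s{+}1
\]
when $s \in \omega(J)$, or simply $s \to s+1$ when $s \notin \omega(J)$; the descent through the block uses $\pi^{-1}(j)=j-1$ on the interior elements, while the final transition at the unique $J$-element uses $\pi^{-1}(j)=\omega(j)+1$. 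Since $\omega(n+1)=n$ makes the $\omega=n$ block nonempty and anchored at $n+1$, the closure rule $\pi^{-1}(n+1)=1$ returns the orbit to $1$. The orbit is thus a single $2n$-cycle that visits each small once and, via the block partition of $\{n+1,\dots,2n\}$, each large once.

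From this description the two conditions fall out cleanly. The smalls are visited in order $1,2,\dots,n$ by construction. For the larges, monotonicity of $\omega$ places the block $\omega^{-1}(s)$ entirely to the right of $\omega^{-1}(s')$ whenever $s<s'$; since the cycle traverses the blocks in the order $s=1,2,\dots,n$ and each block in decreasing index order, the larges appear in the overall decreasing order $2n,2n-1,\dots,n+1$, confirming condition~(1). For condition~(2), I would write $b_s := |\omega^{-1}(s)\cap\{n+1,\dots,2n\}|$ and observe that the tightest prefix inequality is $\sum_{j\le s} b_j \le s$, saturated just before the orbit reaches small $s+1$. This inequality says that at most $s$ columns of $A$ carry at least $n-s$ zeros; but the defining Dyck property $z_k \le k-1$ of $A$ forces any such column to have $k \ge n-s+1$, so at most $s$ columns qualify.

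The principal obstacle is the middle step: verifying that each $\omega$-block is entered at its rightmost element $m_+(s)$, descended one unit at a time, and exited into $s+1$ only through the unique $J$-element at its leftmost end. This requires correctly identifying which elements of $\{n+1,\dots,2n\}$ belong to $J$ and handling the boundary subtleties of empty blocks ($s \notin \omega(J)$), the terminal block $\omega^{-1}(n)$, and the closure $n+1 \to 1$. Once these pieces are in place, both the single-cycle property and the two conditions of the theorem follow at once.
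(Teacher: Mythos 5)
Your proposal is correct, and it reaches the theorem by a genuinely different route than the paper. The paper never inverts Proposition~\ref{prop:explicity function for decorated permutations}: it proves the single-cycle property by contradiction, using the monotonicity $\omega(i) \ge \omega(\pi(i))$ to show that any cycle avoiding $1$ would have constant weight and hence lie inside $\{n+1,\dots,2n\}$, which is impossible; and it proves condition~(2) by a minimal-counterexample argument combined with the inequality $\omega(j) \ge 2n-j+1$. You instead compute $\pi^{-1}$ explicitly and trace the orbit of $1$ block by block, which yields the single-cycle property, condition~(1), and an explicit normal form for the cycle all at once; your condition~(2) then reduces to the direct counting statement $\sum_{j\le s} b_j \le s$, which is exactly the Dyck inequality $z_k \le k-1$ in disguise (indeed $\omega(n+k) = n - z_k$, so your inequality and the paper's $\omega(j)\ge 2n-j+1$ are the same fact packaged constructively rather than via contradiction). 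The one step you should make explicit when writing this up is the justification that equal weight implies equal column (a column of a Dyck matrix is determined by its number of zeros), which is what guarantees that within each $\omega$-block only the leftmost index lies in $J$ and hence that the descent $j \mapsto j-1$ through a block is legal; the paper uses the same fact when it asserts that indices of equal columns of $B$ appear consecutively. Your approach costs a somewhat longer case analysis but buys an explicit description of the cycle (small $s$, then its block read right to left, then $s+1$), which the paper only obtains implicitly and which is essentially the content of Theorem~\ref{thm:decorated permutations from the antiadjacency matrices of UIOs} in the next section.
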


\begin{proof}
	From Proposition~\ref{prop:explicity function for decorated permutations} we immediately deduce that if $\pi(i) = j$ for $1 < i \le 2n$, then $\omega(i) = \omega(j)$ when $i > n$ and $\omega(i) = \omega(j) + 1$ when $i \le n$. This implies, in particular, that $\omega(i) \ge \omega(j)$. Suppose, by way of contradiction, that $\pi^{-1}$, and so $\pi$, is not a $2n$-cycle. Then there is a cycle $(i_1 \ i_2 \ \dots \ i_k)$ in the canonical cycle-type decomposition of $\pi$ that does not contain $1$. Therefore one has
	\[
		\omega(i_1) \ge \omega(i_2) \ge \dots \ge \omega(i_k) \ge \omega(i_1),
	\]
	which implies $\omega(i_1) = \omega(i_2) = \dots = \omega(i_k)$. Since $\{i_1, \dots, i_k\}$ does not contain $1$, it follows that $\{i_1, \dots, i_k\} \subseteq \{n+1, \dots, 2n\}$, which is a contradiction. Hence the cycle-type decomposition of $\pi^{-1}$ contains only one cycle, which has length $2n$.
	
	Since $\pi(1) = n+1$, one gets that $\pi = (1 \ n\!+\!1 \ i_1 \ i_2 \ \dots \ i_{2n-2})$, where $\{i_1, \dots, i_{2n-2}\}$ is precisely the set $[2n] \setminus \{1,n+1\}$. As
	\begin{equation*}
		\omega(i_1) \ge \omega(i_2) \ge \dots \ge \omega(i_{2n-2}),
	\end{equation*}
	and $\omega(i) = i$ for every $i \in [n]$, the elements of the set $\{2, \dots, n\}$ must appear in the cycle $(1 \ n\!+\!1 \ i_1 \ i_2 \ \dots \ i_{2n-2})$ in decreasing order. On the other hand, by Proposition~\ref{prop:explicity function for decorated permutations} the indices of equal columns of $B$ (but perhaps the first one) show in increasing order and consecutively in the sequence $(1, n+1,  i_1,  i_2, \dots, i_{2n-2})$. Also, as the weight map $\omega$ is strictly decreasing when restricted to $J$, the elements of the set $\{n+1, \dots, 2n\}$ must appear in increasing order in the cycle $(1 \ n\!+\!1 \ i_1 \ i_2 \ \dots \ i_{2n-2})$. Thus, condition~(1) holds.
	
	To show condition~(2), write $\pi = (n\!+\!1 \ i_1 \ i_2 \ \dots \ i_{2n-2} \ 1)$ and suppose, by way of contradiction, that there exists $m \in \{1, \dots, 2n-2\}$ such that
	\begin{equation} \label{eq:comparison of cardinality}
		\big|\big\{1 \le  j \le m \mid i_j \in \{2, \dots, n\}\big\}\big| - 1 > \big|\big\{1 \le j \le m \mid i_j \in \{n+1,\dots, 2n\}\big\}\big|.
	\end{equation}
	Let $m$ be the minimal such index. By the minimality of $m$, one obtains that $i_m \in \{2, \dots, n\}$. Let $k$ be the maximum index such that $m \le k$ and $i_j \in \{2,\dots,n\}$ for each $j = m, \dots, k$. Note that $k < 2n-2$ and $\pi(i_k) \in \{n+2,\dots,2n\}$. Since
	\[
		|\{j \le k \mid 2 \le i_j \le n \}| = |\{i_k, \dots, n\}|
	\]
	and
	\[
		|\{j \le k \mid n+2 \le i_j \le 2n\}| = |\{n+2, \dots, \pi(i_k) - 1\}|,
	\]
	it follows by \eqref{eq:comparison of cardinality} that $(n - i_k + 1) - 1 > (\pi(i_k) - 1) - (n+2) + 1 = \pi(i_k) - n -2$, which implies $2n - \pi(i_k) + 1> i_k - 1$. On the other hand, the fact that all the entries of $A$ below and on the main diagonal equal $1$ implies that $\omega(j) \ge 2n - j + 1$ for every $n+1 \le j \le 2n$. Since $1 < i_k \le n$, one finds that $i_k = \omega(i_k) = \omega(\pi(i_k)) + 1$. As $n+1 \le \pi(i_k) \le 2n$, we have
	\[
		i_k - 1 = \omega(\pi(i_k)) \ge 2n - \pi(i_k) + 1 > i_k - 1,
	\]
	which is a contradiction. Hence, writing $\pi^{-1} = (1 \ j_1 \ \dots \ j_{2n-1})$, we will obtain that for $k = 1, \dots, 2n-1$, the set $\{1, j_1, \dots, j_k\}$ contains at least as many elements of the set $[n]$ as elements of the set $\{n+1, \dots, 2n\}$, which is condition~(2).
\end{proof}

\section{A Direct Way to Read The Unit Interval Positroid} \label{sec:reading the UIP from the Dyck matrix}

Throughout this section, let $P$ be a canonically $n$-labeled unit interval order with antiadjacency matrix $A$. Also, let $\mathcal{I} = \{[q_i, q_i + 1] \mid 1 \le i \le n\}$ be a canonical interval representation of $P$ (i.e., $q_1 < \dots < q_n$); Proposition~\ref{prop:interval representations of a canonically labeled UIO} ensures the existence of such an interval representation. In this section we describe a way to obtain the decorated permutation associated to the unit interval positroid induced by $P$ directly from either $A$ or $\mathcal{I}$. Such a description will reveal that the function $\rho \circ \psi \circ \varphi \colon \mathcal{U}_n \to \mathcal{P}_n$ introduced in Section~\ref{sec:background} is a bijection (Theorem~\ref{thm:fundamental bijection}).

Recall that the north and east borders of the Young diagram formed by the nonzero entries of $A$ give a path of length $2n$ that we call the \emph{semiorder path} of $A$. Let $B = (I_n | A') = \psi(A)$, where $\psi$ is the map introduced in Lemma~\ref{lem:correspondence between totally nonnegative square and rectangular matrix}. We will also associate a second path to $A$. Let the \emph{inverted path} of $A$ be the path consisting of the south and east borders of the Young diagram formed by the nonzero entries of $A'$. Note that the inverted path of $A$ is just the reflection in a horizontal line of the semiorder path of $A$. Example~\ref{ex:decorated permutation from Dyck matrices} sheds light upon the statement of the next theorem, which describes a way to find the decorated permutation associated to the unit interval positroid induced by $P$ directly from $A$.

\begin{theorem} \label{thm:decorated permutations from the antiadjacency matrices of UIOs}
	If we number the $n$ vertical steps of the semiorder path of $A$ from bottom to top in increasing order with $\{1,\dots,n\}$ and the $n$ horizontal steps from left to right in increasing order with $\{n+1, \dots, 2n\}$, then by reading the semiorder path in the northwest direction, we obtain the decorated permutation associated to the unit interval positroid induced by $P$.
\end{theorem}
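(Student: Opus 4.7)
The plan is to prove the theorem by a step-by-step matching of the labels encountered in the northwest traversal of the semiorder path with the cycle structure of $\pi^{-1}$, using the explicit formula for $\pi$ provided by Proposition~\ref{prop:explicity function for decorated permutations}. First, I will set up the correspondence between path steps and their labels: writing the semiorder path as a lattice path from $(0,0)$ to $(n,n)$ and letting $z_j$ denote the number of zeros in column $j$ of $A$, the $j$-th horizontal step $H_j$ from the left (labeled $n+j$) lies at height $y = z_j$, while the vertical step $V_i$ in row $i$ carries label $n-i+1$. A direct computation from the definition of $\psi$ then yields $\omega(n+j) = n - z_j$ for each $j \in [n]$.

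Next I will check that the first NW label is always $1$, which follows because $z_n \le n-1$ forces the last step of the forward path to be a vertical step ending at $(n,n)$; this step is $V_n$, whose label is $1$, agreeing with the canonical form of $\pi^{-1}$ from Theorem~\ref{thm:main result}. The heart of the proof will be an inductive step: assuming the current NW step has label $k$, I will show the NW-next step has label $\pi^{-1}(k)$, via a case analysis. \emph{Case V.} If the current step is $V_i$ at column $x_i$, its NW-next is either $V_{i-1}$ (when $V_i$ is preceded in the forward direction by another vertical step) or $H_{x_i}$ (when preceded by a horizontal step). In the first subcase $z_{x_i} \le i-2$ combined with $z_{x_i+1} \ge i$ shows the value $i-1$ is skipped by $(z_j)$, so $k = n-i+1 \notin \omega(J)$ and Proposition~\ref{prop:explicity function for decorated permutations} gives $\pi^{-1}(k) = k+1$, the label of $V_{i-1}$. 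In the second subcase $z_{x_i} = i-1$, so $\omega(n+x_i) = k$; I will argue that $n+x_i$ is a ``fall'' position of $\omega$, which via Proposition~\ref{prop:explicity function for decorated permutations} forces $\pi^{-1}(k) = n+x_i$. \emph{Case H.} If the current step is $H_j$ with $j \ge 2$ (labeled $n+j$) and its forward-predecessor is vertical, then it must be $V_{z_j}$ and we have $z_{j-1} < z_j$, so $n+j \in J$ and Proposition~\ref{prop:explicity function for decorated permutations} yields $\pi^{-1}(n+j) = \omega(n+j) + 1 = n - z_j + 1$, the label of $V_{z_j}$. If the predecessor is instead $H_{j-1}$, then $z_{j-1} = z_j$, $n+j \notin J$, and $\pi^{-1}(n+j) = n+j-1$. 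Finally, the terminal step $H_1$ closes the cycle via $\pi^{-1}(n+1) = 1$, which is the label of the initial step $V_n$.

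The main obstacle I anticipate is the horizontal-predecessor subcase of Case V: proving that $n + x_i$ is a fall position of $\omega$. This should follow from the observation that $V_i$ being the unique forward vertical step crossing row $i$ forces the path at $x = x_i + 1$ (when $x_i < n$) to enter at height at least $i$, hence $z_{x_i + 1} > z_{x_i}$ and so $n + x_i + 1 \in J$; when $x_i = n$ the position is $2n$, automatically a fall. A secondary but routine task will be making the bijections between path steps, matrix rows/columns, and the values of $\omega$ sufficiently precise that each invocation of Proposition~\ref{prop:explicity function for decorated permutations} is unambiguous in every subcase.
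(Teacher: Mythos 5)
Your proposal is correct and takes essentially the same route as the paper's proof: both verify, step by step along the length-$2n$ path, that consecutive labels are related by the permutation, via the same four-way case analysis (vertical/horizontal predecessor) and the explicit formula of Proposition~\ref{prop:explicity function for decorated permutations}. The only cosmetic difference is that the paper runs the verification on the reflected (``inverted'') path read northeast and checks $\pi(s_k)=s_{k+1}$, while you work directly on the semiorder path read northwest and check the equivalent identity for $\pi^{-1}$.
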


\begin{proof}
	Let $\pi^{-1}$ be the decorated permutation associated to the unit interval positroid induced by $P$. Label the $n$ vertical steps of the inverted path of $P$ from top to bottom in increasing order using the label set $[n]$, and we label the $n$ horizontal steps from left to right in increasing order using the label set $\{n+1, \dots, 2n\}$ (see Example~\ref{ex:decorated permutation from Dyck matrices}). Proving the theorem amounts to showing that we can obtain $\pi$ (the inverse of the decorated permutation) by reading the inverted path in the northeast direction. Let $(s_1, s_2, \dots, s_{2n})$ be the finite sequence obtained by reading the inverted path in the northeast direction. Since the first step of the inverted path is horizontal and the last step of the inverted path is vertical, $s_1 = n+1$ and $s_{2n} = 1$. Thus, it suffices to check that $\pi(s_k) = s_{k+1}$ for $k = 1, \dots, 2n-1$.
	
	Suppose first that the $k$-th step of the inverted path is horizontal, and so located right below the last nonzero entry of the $s_k$-th column of $B$. If the $(k+1)$-st step is also horizontal, then $s_{k+1} = s_k + 1$, which means that $\pi(s_k) = s_k + 1$ and so $\pi(s_k) = s_{k+1}$. On the other hand, if the $(k+1)$-st step is vertical, then $s_k = 2n$ or $s_k + 1$ is in the set of principal indices $J$ of $B$; in both cases, $\pi(s_k) = \omega(s_k)$, the number of vertical steps from the top to $s_k$, namely, $s_{k+1}$. Hence $\pi(s_k) = s_{k+1}$.
	
	Suppose now that the $k$-th step of the inverted path is vertical. Clearly, this implies that $1 \le s_k \le n$. If the $(k+1)$-st step is also vertical, then $s_{k+1} = s_k - 1$. Because steps $k$ and $k+1$ are both vertical, $A'$ does not contain any column with weight $s_k - 1$. As a result, $\pi(s_k) = s_k - 1 = s_{k+1}$. Finally, if the $(k+1)$-st step is horizontal, then $\{s_{k+1}\} = J \cap \omega^{-1}(s_k - 1)$ and, by Proposition~\ref{prop:explicity function for decorated permutations}, we find that $\pi(s_k) = s_{k+1}$.
\end{proof}

\begin{example} \label{ex:decorated permutation from Dyck matrices}
	In Figure~\ref{fig:Dyck matrix showing its semiorder path}, we can see displayed the antiadjacency matrix $A$ of the canonically $5$-labeled unit interval order $P$ introduced in Example~\ref{ex:main example} and the matrix $\psi(A)$ both showing their respective semiorder and inverted path encoding the decorated permutation $\pi = (1 \ 2 \ 1\!0 \ 3 \ 9 \ 4 \ 8 \ 7 \ 5 \ 6)$ associated to the positroid induced by $P$.
	\begin{figure}[h]
		\centering
		\includegraphics[width = 3.9cm]{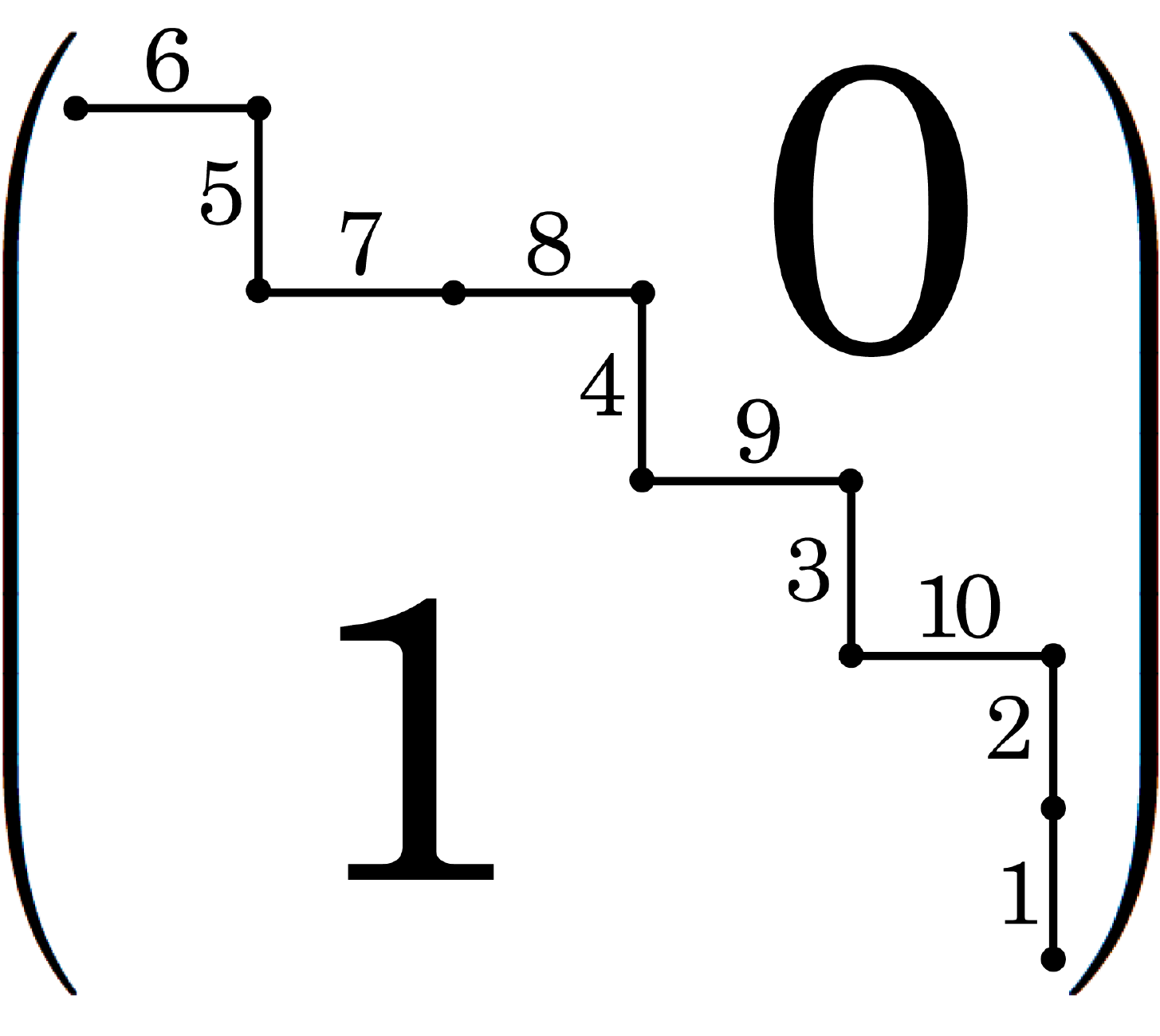}
		\raisebox{-0.13\height}{\includegraphics[width = 1.6cm]{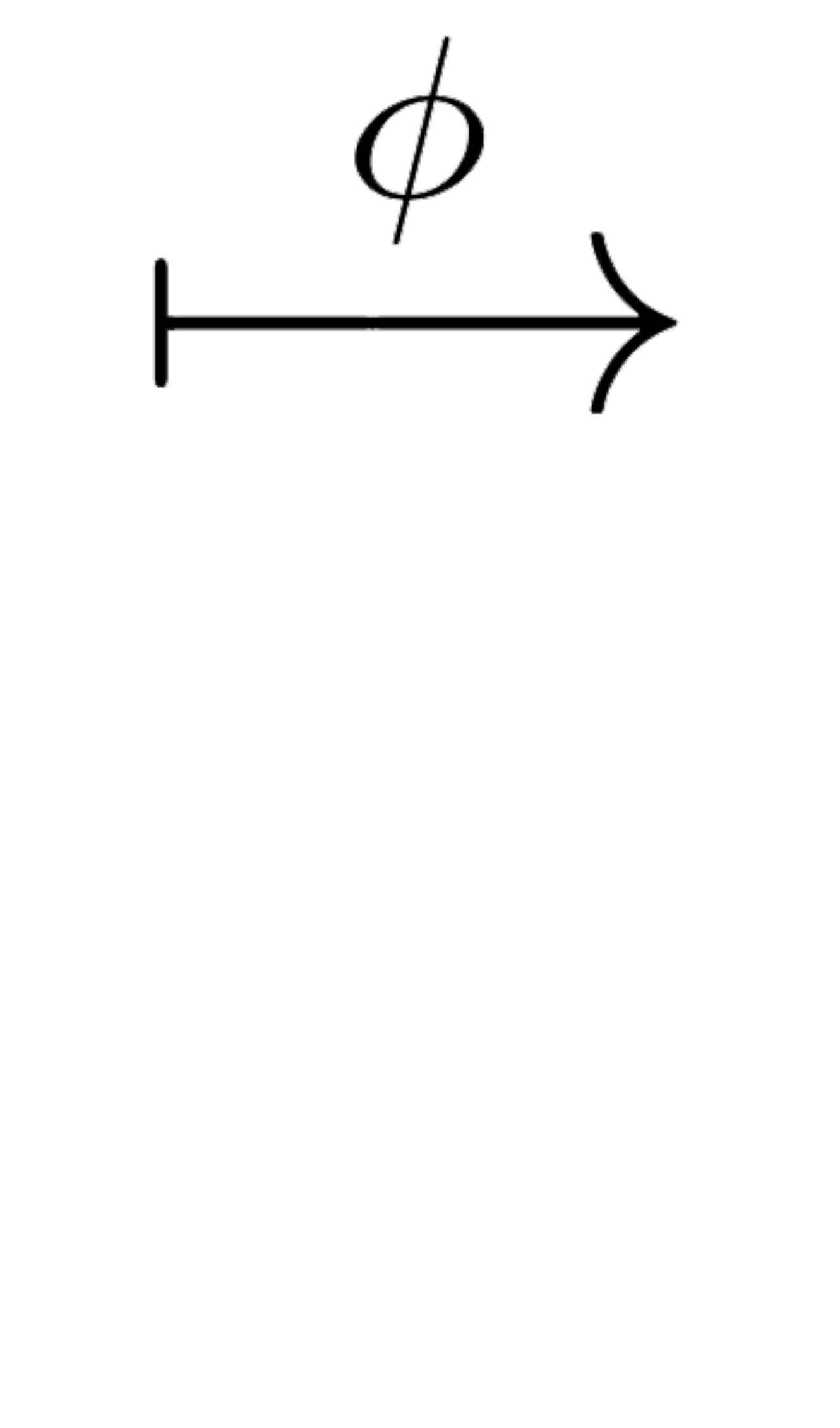}}
		\includegraphics[width = 7.4cm]{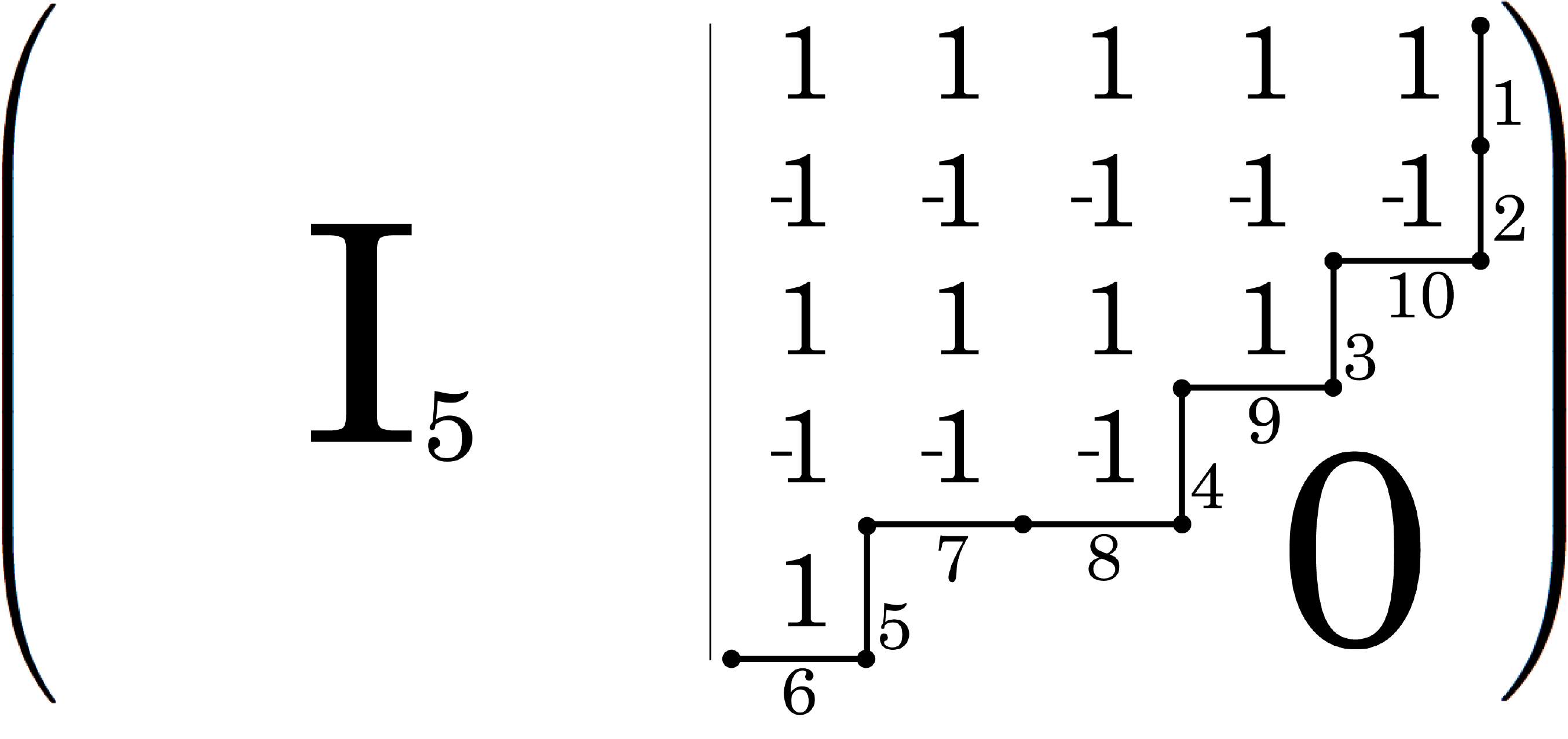}
		\caption{Dyck matrix $A$ and its image $\psi(A)$ exhibiting the decorated permutation $\pi$ along their semiorder path and inverted path, respectively.}
		\label{fig:Dyck matrix showing its semiorder path}
	\end{figure}
\end{example}

As a consequence of Theorem~\ref{thm:decorated permutations from the antiadjacency matrices of UIOs}, we can deduce that the map $\rho \circ \psi \circ \varphi \colon \mathcal{U}_n \to \mathcal{P}_n$, where $\rho$, $\psi$, and $\varphi$ are as defined in Section~\ref{sec:background} and Section~\ref{sec:canonically labelings on UIO}, is indeed a bijection.

\begin{lemma} \label{lem:Dyck paths specified by decorated permutations}
	The set of $2n$-cycles $(1 \ j_1 \ \dots \ j_{2n-1})$ satisfying conditions (1) and (2) of Theorem~\ref{thm:main result} is in bijection with the set of Dyck paths of length $2n$.
\end{lemma}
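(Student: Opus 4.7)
The plan is to exhibit a direct bijection $\Psi$ between the two sets by translating the positions of ``small'' entries (those in $\{1, \dots, n\}$) and ``large'' entries (those in $\{n+1, \dots, 2n\}$) of the cycle into up-steps and down-steps of a lattice path. Conditions (1) and (2) are engineered so that only the \emph{pattern} of small-versus-large entries in the sequence carries information, while condition (1) then pins down the exact labels.

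First, I would define $\Psi$ on a cycle $(1 \ j_1 \ \dots \ j_{2n-1})$ by letting its $k$-th step (with the initial $1$ counted as the first term) be an up-step if the $k$-th entry lies in $\{1, \dots, n\}$ and a down-step if it lies in $\{n+1, \dots, 2n\}$. Since the sequence is a permutation of $[2n]$, the resulting lattice path has exactly $n$ up-steps and $n$ down-steps. Condition~(2) translates verbatim into the Dyck property: every prefix of the path has at least as many up-steps as down-steps, so $\Psi$ lands in the set of Dyck paths of length $2n$.

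Next, I would define the inverse $\Psi^{-1}$ as follows. Given a Dyck path of length $2n$, label its $n$ up-steps from left to right by $1, 2, \dots, n$, and its $n$ down-steps from left to right by $2n, 2n-1, \dots, n+1$; then read off the sequence of labels in the natural left-to-right order. Since the first step of any nonempty Dyck path is an up-step, the sequence begins with $1$, so it has the required form $(1, j_1, \dots, j_{2n-1})$. Condition~(1) holds by construction, because small labels are assigned in increasing order and large labels in decreasing order along the path, and condition~(2) again follows from the Dyck property. Finally, the two maps are mutually inverse: once one fixes which positions of the sequence are small versus large (i.e., the underlying path), condition~(1) uniquely recovers the labels; and conversely, reading step-types from a labeled path gives back the original path.

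The argument is essentially bookkeeping, and I anticipate no substantial obstacle beyond formalizing the two definitions above and checking that the verbal translations are correct; the content of the lemma really amounts to the observation that conditions~(1) and~(2) isolate the small/large pattern of the sequence and impose the ballot condition on it.
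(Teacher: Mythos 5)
Your proposal is correct and is essentially the paper's own argument: the paper assigns ascending steps to entries in $\{1,\dots,n\}$ and descending steps to entries in $\{n+1,\dots,2n\}$ and declares the rest straightforward, while you simply spell out the inverse map and the role of conditions (1) and (2) explicitly. No issues.
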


\begin{proof}
	We can assign a Dyck path $D$ of length $2n$ to the $2n$-cycle $(1\!\!=\!\!j_0 \ j_1 \ \dots \ j_{2n-1})$ by thinking of the entries $j_i \in \{1,\dots,n\}$ as ascending steps of the Dyck path $D$ and the entries $j_i \in \{n+1, \dots,2n\}$ as descending steps of $D$. The fact that such an assignment yields the desired bijection is straightforward.
\end{proof}
\medskip

\begin{theorem} \label{thm:fundamental bijection}
	The map $\rho \circ \psi \circ \varphi \colon \mathcal{U}_n \to \mathcal{P}_n$ is a bijection.
\end{theorem}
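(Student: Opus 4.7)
The plan is to establish injectivity of $\rho \circ \psi \circ \varphi$, since surjectivity onto $\mathcal{P}_n$ is immediate from the definition of $\mathcal{P}_n$ as the image of this composition. Because $\varphi \colon \mathcal{U}_n \to \mathcal{D}_n$ is already a bijection by Theorem~\ref{thm:bijection between UIOs and Dyck matrices}, it suffices to show that if $A_1, A_2 \in \mathcal{D}_n$ are distinct, then the unit interval positroids $\rho(\psi(A_1))$ and $\rho(\psi(A_2))$ are distinct.

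The main tool is Theorem~\ref{thm:decorated permutations from the antiadjacency matrices of UIOs}, which produces the decorated permutation of $\rho(\psi(A))$ directly from the semiorder path of $A$ via a purely combinatorial recipe: label the vertical steps bottom-to-top by $1, \dots, n$, label the horizontal steps left-to-right by $n+1, \dots, 2n$, and read in the northwest direction. Distinct Dyck matrices have distinct semiorder paths (since a Dyck matrix is recovered from its Dyck path), and this labeled northwest read-off is clearly a complete invariant of the labeled path; so distinct Dyck matrices yield distinct decorated permutations. Since each positroid is uniquely determined by its decorated permutation (via its Grassmann necklace, as recalled in Section~\ref{sec:background}), this forces $\rho(\psi(A_1)) \neq \rho(\psi(A_2))$, giving injectivity.

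As a sanity check I would also record the cardinalities: Theorem~\ref{thm:bijection between UIOs and Dyck matrices} gives $|\mathcal{U}_n| = |\mathcal{D}_n| = C_n$, while Theorem~\ref{thm:main result} combined with Lemma~\ref{lem:Dyck paths specified by decorated permutations} shows that the decorated permutations produced by $\rho \circ \psi \circ \varphi$ lie in a set of exactly $C_n$ distinguished $2n$-cycles. Hence the bijection realizes $\mathcal{P}_n$ as a set of Catalan cardinality, confirming the ``In particular'' clause of the Main Theorem.

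The main obstacle is really just conceptual bookkeeping rather than new technical work: all substantive content has been established in the preceding theorems and lemma. The only care-point to spell out is that the recipe of Theorem~\ref{thm:decorated permutations from the antiadjacency matrices of UIOs} is an \emph{injective} assignment $\mathcal{D}_n \to \{\text{decorated permutations of } [2n]\}$, which is immediate because the labeled-step sequence read northwest from a Dyck path determines the path itself.
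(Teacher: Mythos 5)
Your proposal is correct and takes essentially the same approach as the paper: surjectivity is immediate from the definition of $\mathcal{P}_n$, and the substance in both arguments is that the semiorder-path read-off of Theorem~\ref{thm:decorated permutations from the antiadjacency matrices of UIOs} sends distinct Dyck matrices to distinct decorated permutations, which determine their positroids uniquely. The only cosmetic difference is that you phrase this as direct injectivity, whereas the paper packages it as the cardinality bound $|\mathcal{P}_n| \ge \frac{1}{n+1}\binom{2n}{n}$ via Lemma~\ref{lem:Dyck paths specified by decorated permutations}.
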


\begin{proof}
	By definition of $\mathcal{P}_n$, it follows that $\rho \circ \psi \circ \varphi$ is surjective. Since $|\mathcal{U}_n|$ is the $n$-th Catalan number, it suffices to show that $|\mathcal{P}_n| \ge \frac{1}{n+1} \binom{2n}{n}$. To see this, one can take a $2n$-cycle $\sigma = (1 \ j_1 \ \dots \ j_{2n-1})$ satisfying conditions (1) and (2) of Theorem~\ref{thm:main result}, and consider the Dyck path $D$ specified by $\sigma$ as in Lemma~\ref{lem:Dyck paths specified by decorated permutations}. By Theorem~\ref{thm:reading decorated permutation from Dyck matrix}, the Dyck matrix whose semiorder path is the reverse of $D$ induces a unit interval positroid with decorated permutation $\sigma$. Because the decorated permutation associated to a positroid is unique, Lemma~\ref{lem:Dyck paths specified by decorated permutations} guarantees that $|\mathcal{P}_n| \ge \frac{1}{n+1} \binom{2n}{n}$. Hence $\rho \circ \psi \circ \varphi$ is bijective.
\end{proof}

\begin{cor}
	The number of unit interval positroids on the ground set $[2n]$ equals the $n$-th Catalan number.
\end{cor}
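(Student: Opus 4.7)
The plan is to obtain this as an immediate consequence of the bijection established in Theorem~\ref{thm:fundamental bijection}. That theorem asserts that $\rho \circ \psi \circ \varphi \colon \mathcal{U}_n \to \mathcal{P}_n$ is a bijection, so in particular $|\mathcal{P}_n| = |\mathcal{U}_n|$. Thus the only thing left to invoke is the classical enumeration $|\mathcal{U}_n| = \frac{1}{n+1}\binom{2n}{n}$, which is the $n$-th Catalan number; this was recalled in the introduction (attributed to Wine and Freund \cite{WF57}, with a second proof in Dean and Keller \cite{DK68}) and is also built into the proof of Theorem~\ref{thm:fundamental bijection} through the bijection $\varphi \colon \mathcal{U}_n \to \mathcal{D}_n$ of Theorem~\ref{thm:bijection between UIOs and Dyck matrices} together with the standard fact that $|\mathcal{D}_n|$ equals the $n$-th Catalan number (via the semiorder path encoding).

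So the proof I would write is a single sentence: by Theorem~\ref{thm:fundamental bijection}, $|\mathcal{P}_n| = |\mathcal{U}_n|$, and by \cite{WF57} (or equivalently by Theorem~\ref{thm:bijection between UIOs and Dyck matrices} combined with the semiorder-path encoding of Dyck matrices), this common cardinality equals $\frac{1}{n+1}\binom{2n}{n}$. There is no real obstacle, since all the substantive work has already been carried out: the forward direction of the Main Theorem was proved in Theorem~\ref{thm:main result}, the reverse direction was obtained in Theorem~\ref{thm:fundamental bijection} by showing $|\mathcal{P}_n| \ge \frac{1}{n+1}\binom{2n}{n}$ using Lemma~\ref{lem:Dyck paths specified by decorated permutations} and Theorem~\ref{thm:decorated permutations from the antiadjacency matrices of UIOs}, and the surjectivity of $\rho \circ \psi \circ \varphi$ holds by the very definition of $\mathcal{P}_n$. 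The corollary is then just the numerical reading of this bijection.
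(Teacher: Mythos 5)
Your proof is correct and matches the paper's intent exactly: the corollary is stated immediately after Theorem~\ref{thm:fundamental bijection} precisely because it follows from that bijection together with the classical fact that $|\mathcal{U}_n|$ is the $n$-th Catalan number. Nothing further is needed.
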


We conclude this section by describing how to decode the decorated permutation associated to the unit interval positroid induced by $P$ directly from its canonical interval representation $\mathcal{I}$. Labeling the left and right endpoints of the intervals $[q_i, q_i + 1] \in \mathcal{I}$ by $-$ and $+$, respectively, we obtain a $2n$-tuple consisting of pluses and minuses by reading from the real line the labels of the endpoints of all such intervals. On the other hand, we can have another \emph{plus-minus} $2n$-tuple if we replace the horizontal and vertical steps of the semiorder path of $A$ by $-$ and $+$, respectively, and then read it in southeast direction as indicated in the following example.

\begin{example} \label{ex:plus-minus vectors of the semiorder and inverted paths}
	The figure below shows the antiadjacency matrix of the canonically $5$-labeled unit interval order $P$ from Example~\ref{ex:main example} and a canonical interval representation of $P$, both encoding the plus-minus $10$-tuple $(-,+,-,-,+,-,+,-,+,+)$, as described in the previous paragraph.
	\begin{figure}[h] \label{fig:plus-minus vectors}
		\centering
		\includegraphics[width = 3.8cm]{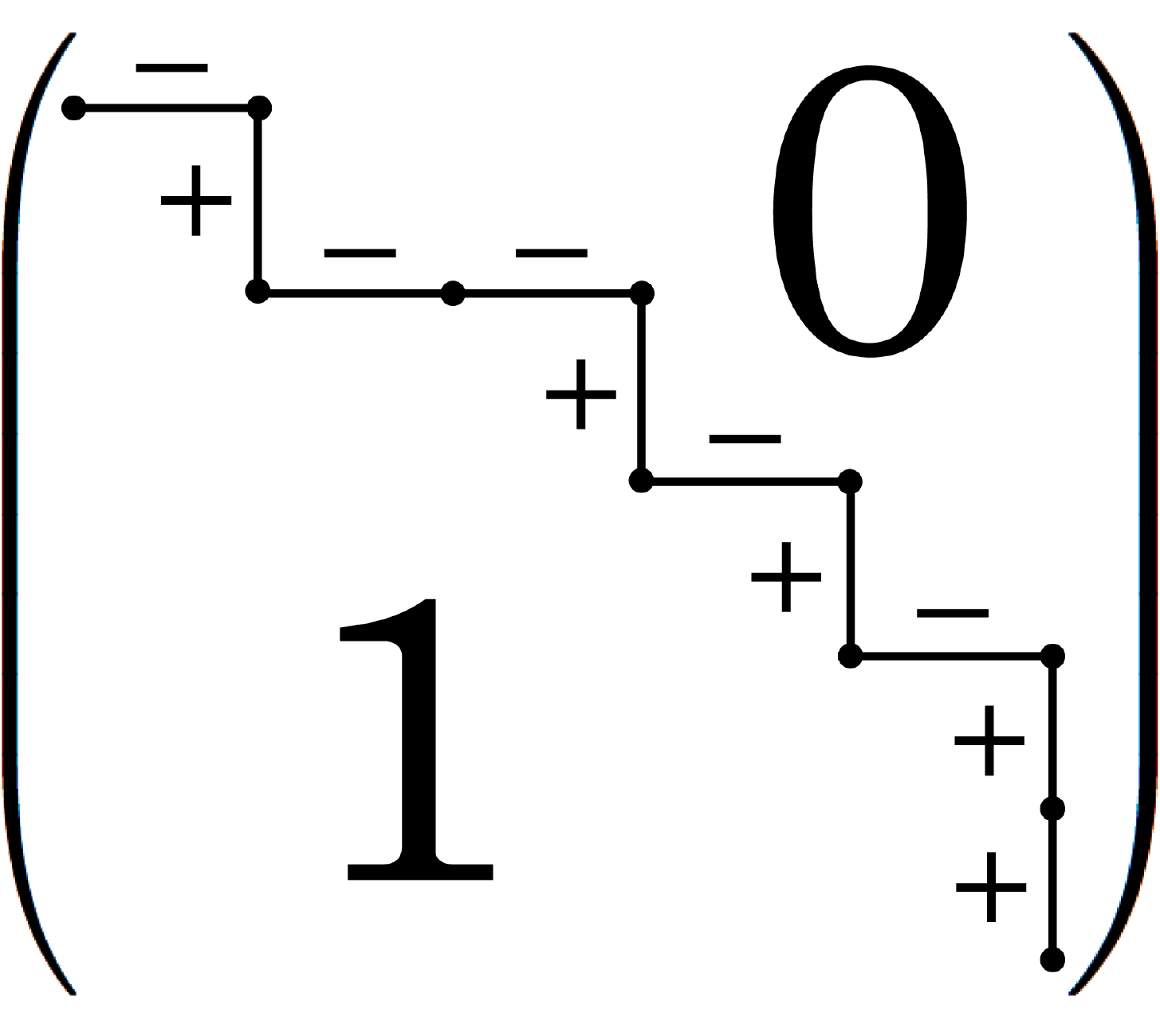} \
		\includegraphics[width = 1.2cm]{DoubleArrow} \
		\includegraphics[width = 7.5cm]{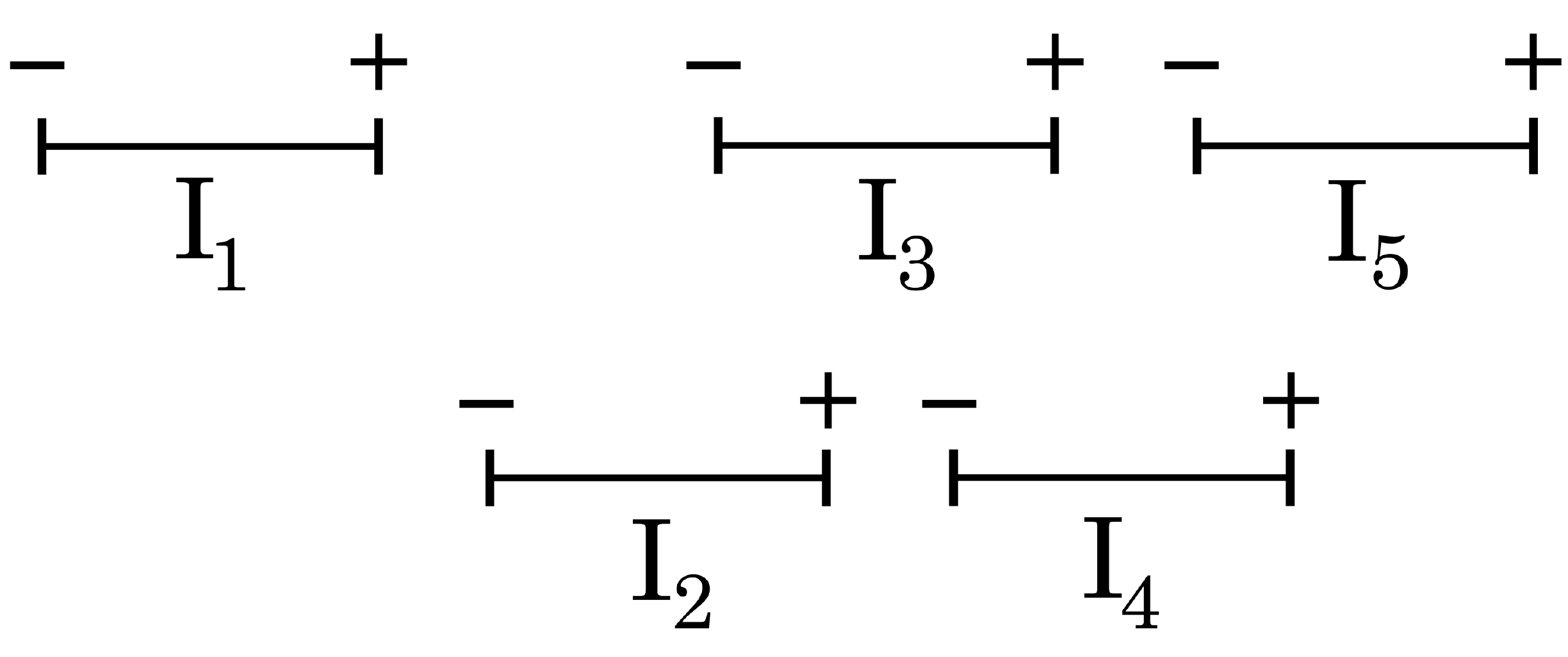}
		\caption{Dyck matrix and canonical interval representation of $P$ encoding the $10$-tuple $(-,+,-,-,+,-,+,-,+,+)$.}
		\label{fig:plus-minus vectors of the semiorder and inverted paths}
	\end{figure}
\end{example}

\begin{lemma} \label{lem:matrix and interval representations of UIO have the same pm-tuple}
	Let $\mathbf{a}_n =(a_1, \dots, a_{2n})$ and $\mathbf{b}_n = (b_1, \dots, b_{2n})$ be the $2n$-tuples with entries in $\{+,-\}$ obtained by labeling the steps of the semiorder path of $A$ and the endpoints of all intervals in $\mathcal{I}$, respectively, in the way described above. Then $\mathbf{a}_n = \mathbf{b}_n$.
\end{lemma}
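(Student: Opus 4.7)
The plan is to prove the equality entry by entry, by induction on $k \in \{0,1,\dots,2n\}$, showing that the first $k$ entries of $\mathbf{a}_n$ and $\mathbf{b}_n$ coincide. For each such $k$, let $L_k$ and $R_k$ denote the numbers of $-$'s and $+$'s, respectively, among the first $k$ entries of $\mathbf{b}_n$, and let $H_k$ and $V_k$ denote the numbers of horizontal and vertical steps, respectively, among the first $k$ steps of the semiorder path of $A$. Both pairs start at $(0,0)$, terminate at $(n,n)$, and satisfy the Dyck-type inequalities $L_k \ge R_k$ (every right endpoint $q_j+1$ is preceded by the matching left endpoint $q_j$) and $H_k \ge V_k$ (the semiorder path stays weakly below the main diagonal). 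Before beginning the induction I would perturb the $q_i$ slightly, if necessary, so that the $2n$ endpoints of $\mathcal{I}$ are pairwise distinct; this keeps the representation canonical and does not alter $P$ nor its Dyck matrix.

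The inductive step consists of showing that, assuming $(L_{k-1}, R_{k-1}) = (H_{k-1}, V_{k-1})$, the next entry of each tuple is controlled by the same condition on $A$. On the interval side, the ordering $q_1 < \dots < q_n$ forces the next left endpoint to be $q_{L_{k-1}+1}$ and the next right endpoint to be $q_{R_{k-1}+1}+1$, so the $k$-th entry of $\mathbf{b}_n$ is $-$ precisely when $q_{L_{k-1}+1} < q_{R_{k-1}+1}+1$, which by the definition of the antiadjacency matrix is equivalent to $a_{R_{k-1}+1,\, L_{k-1}+1} = 1$. On the matrix side, at the grid point $(H_{k-1}, V_{k-1})$ the single cell $(V_{k-1}+1,\, H_{k-1}+1)$ determines the next step: if it equals $1$ it must lie just below the path, forcing a horizontal step (a $-$), while if it equals $0$ it must lie just above the path, forcing a vertical step (a $+$); this is a direct consequence of the Dyck-matrix structure of $A$. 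Invoking the inductive hypothesis $(L_{k-1}, R_{k-1}) = (H_{k-1}, V_{k-1})$, the two criteria are the same equality $a_{V_{k-1}+1,\, H_{k-1}+1} = 1$, so the $k$-th entries of $\mathbf{a}_n$ and $\mathbf{b}_n$ agree and one has $(L_k, R_k) = (H_k, V_k)$.

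I expect the main obstacle to be handling the boundary behaviour correctly rather than the core comparison. When $L_{k-1} = n$ no left endpoint remains and every subsequent entry of $\mathbf{b}_n$ is $+$; correspondingly when $H_{k-1} = n$ the semiorder path is pinned to the right edge and every subsequent step is vertical. These two boundaries coincide by the inductive hypothesis, as do the final positions $L_{2n} = R_{2n} = H_{2n} = V_{2n} = n$, and by the constraints $L \ge R$ and $H \ge V$ the seemingly symmetric cases $R_{k-1} = n > L_{k-1}$ or $V_{k-1} = n > H_{k-1}$ never arise. The potentially delicate tie $q_{L_{k-1}+1} = q_{R_{k-1}+1}+1$ is sidestepped by the initial perturbation, so no tie-breaking convention is needed. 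This closes the induction and yields $\mathbf{a}_n = \mathbf{b}_n$.
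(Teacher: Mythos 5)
Your proof is correct, but it runs a genuinely different induction from the paper's. You fix the poset $P$ and induct on the position $k$ inside the $2n$-tuple, carrying the invariant $(L_{k-1},R_{k-1})=(H_{k-1},V_{k-1})$ and observing that both ``next symbol'' rules reduce to the single entry $a_{V_{k-1}+1,\,H_{k-1}+1}$: the semiorder path steps east exactly when that cell is a $1$ (it lies below a horizontal edge of the north--east border of the ones), and the next endpoint on the line is a left endpoint exactly when $q_{L_{k-1}+1}\le q_{R_{k-1}+1}+1$, i.e.\ $R_{k-1}+1\not<_P L_{k-1}+1$, i.e.\ that same entry is a $1$. The paper instead inducts on $n=|P|$: it deletes the element $n+1$, invokes Proposition~\ref{prop:a labeled UIO is canonical iff its antiadjacency matrix is Dyck} to see that $P\setminus\{n+1\}$ is again canonically labeled with a canonical interval representation, and checks that both $\mathbf{a}_{n+1}$ and $\mathbf{b}_{n+1}$ are obtained from the common smaller tuple by inserting a $-$ at position $m+n+1$ (with $m=|\Lambda_{n+1}|-1$) and appending a $+$. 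Your version is more self-contained and actually exhibits which endpoint of $\mathcal{I}$ corresponds to which step of the semiorder path, at the cost of some boundary bookkeeping; the paper's is shorter but leans on the hereditary structure of canonical labelings. Two small points you should make explicit: (i) the perturbation removing ties must preserve the poset $P$, and this requirement by itself forces any tie $q_i+1=q_j$ (which entails $i\not<_P j$) to resolve with the left endpoint first, matching the matrix-side convention $a_{j,i}=1$ --- a perturbation in the other direction would change $P$; a concrete choice such as $q_i\mapsto q_i-i\delta$ for small $\delta>0$ works. (ii) The inequality $H_k\ge V_k$ you invoke is exactly the statement that all diagonal entries of $A$ equal $1$, which holds since $i\not<_P i$.
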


\begin{proof}
	Let us proceed by induction on the cardinality $n$ of $P$. When $n=1$, both $\mathbf{a}_1$ and $\mathbf{b}_1$ are equal to $(-,+)$ and so $\mathbf{a}_1 = \mathbf{b}_1$. Suppose now that the statement of the lemma is true for every canonically $n$-labeled unit interval order, and assume that $P$ is a unit interval order canonically labeled by $[n+1]$ with antiadjacency matrix $A$ and canonical interval representation $\mathcal{I}$. Set $m = |\Lambda_{n+1}| - 1$. By Proposition~\ref{prop:a labeled UIO is canonical iff its antiadjacency matrix is Dyck}, the poset $P \! \setminus \! \{n+1\}$ is a unit interval order canonically labeled by $[n]$; therefore its associated plus-minus $2n$-tuples $\mathbf{a}'_n$ and $\mathbf{b}'_n$ are equal. Observe, in addition, that $\mathbf{b}_{n+1}$ can be recovered from $\mathbf{b}'_n$ by inserting the $-$ corresponding to the left endpoint of $q_{n+1}$ (labeled by $2n+2$) in the position $m+n+1$ (there are $n$ left interval endpoints and $m$ right interval endpoints to the left of $q_{m+1}$ in $\mathcal{I}$) and adding the $+$ corresponding to the right endpoint of $q_{n+1}$ (labeled by $1$) at the end. On the other hand, $\mathbf{a}_{n+1}$ can be recovered from $\mathbf{a}'_n$ by inserting the $-$ corresponding to the rightmost horizontal step of the semiorder path of $A$ in the position $m+n+1$ (there are $n$ horizontal steps and $m$ vertical steps before the last horizontal step of the semiorder path) and placing the $+$ corresponding to the vertical step labeled by $1$ in the last position. Hence $\mathbf{a}_{n+1} = \mathbf{b}_{n+1}$, and the lemma follows by induction. 
\end{proof}

As a consequence of Theorem~\ref{thm:decorated permutations from the antiadjacency matrices of UIOs} and Lemma~\ref{lem:matrix and interval representations of UIO have the same pm-tuple}, one obtains a way of reading the decorated permutation associated to the unit interval positroid induced by $P$ directly from $\mathcal{I}$.

\begin{cor} \label{cor:decorated permutations from interval representations of UIOs}
	Labeling the left and right endpoints of the intervals $[q_i, q_i + 1]$ by $n+i$ and $n+1-i$, respectively, we obtain the decorated permutation associated to the positroid induced by $P$ by reading these $2n$ labels from right to left on the real line.
\end{cor}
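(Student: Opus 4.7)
The plan is to combine Theorem~\ref{thm:decorated permutations from the antiadjacency matrices of UIOs} with Lemma~\ref{lem:matrix and interval representations of UIO have the same pm-tuple}. By Theorem~\ref{thm:decorated permutations from the antiadjacency matrices of UIOs}, if we number the steps of the semiorder path of $A$ as prescribed and read them in the northwest direction, the resulting $2n$-tuple of labels is precisely the decorated permutation $\pi^{-1}$ associated to the unit interval positroid induced by $P$. The NW reading of the path is the reverse of its SE reading, so by reversing the equality of plus-minus $2n$-tuples given by Lemma~\ref{lem:matrix and interval representations of UIO have the same pm-tuple}, the sign pattern of $\pi^{-1}$ (taking $+$ for a vertical step and $-$ for a horizontal step) matches the sign pattern obtained from $\mathcal{I}$ by reading endpoints from right to left (taking $+$ for a right endpoint and $-$ for a left endpoint).

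It remains to verify that once the sign patterns are aligned, the labels agree in each $+$-position and each $-$-position. On the semiorder-path side, the vertical steps carry the labels $1,\dots,n$ from bottom to top, so in the NW reading these $+$-labels appear in the order $1,2,\dots,n$; similarly the horizontal steps carry $n+1,\dots,2n$ from left to right, so the $-$-labels appear in NW order as $2n, 2n-1, \dots, n+1$. On the interval side, Proposition~\ref{prop:interval representations of a canonically labeled UIO} guarantees $q_1 < \dots < q_n$, hence also $q_1+1 < \dots < q_n+1$. Reading right-to-left on the real line, the right endpoints $q_i+1$ are therefore encountered in the order $i = n, n-1, \dots, 1$, producing the $+$-labels $n+1-i = 1, 2, \dots, n$; the left endpoints $q_i$ are encountered in the order $i = n, n-1, \dots, 1$, producing the $-$-labels $n+i = 2n, 2n-1, \dots, n+1$. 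Thus the labels in each sign class appear in the same order in both readings.

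Since the interleaving of $+$'s and $-$'s is dictated by the common plus-minus tuple (reversed), and the labels within each sign class match position by position, the two labeled sequences coincide. Hence reading the endpoint labels from right to left on the real line recovers $\pi^{-1}$, which is the desired decorated permutation. The only real work is the bookkeeping of reversing directions; the substantive content is already contained in Theorem~\ref{thm:decorated permutations from the antiadjacency matrices of UIOs} and Lemma~\ref{lem:matrix and interval representations of UIO have the same pm-tuple}, so no separate obstacle should arise.
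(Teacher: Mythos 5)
Your proposal is correct and follows essentially the same route as the paper: combine Lemma~\ref{lem:matrix and interval representations of UIO have the same pm-tuple} (the plus-minus tuples of the semiorder path and the interval representation agree) with Theorem~\ref{thm:decorated permutations from the antiadjacency matrices of UIOs}. The paper's own proof is terser, but your extra bookkeeping---checking that within each sign class the labels appear in the same order in both readings, using $q_1 < \dots < q_n$ from Proposition~\ref{prop:interval representations of a canonically labeled UIO}---is exactly the verification the paper leaves implicit.
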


\begin{proof}
	By Lemma~\ref{lem:matrix and interval representations of UIO have the same pm-tuple}, the $2n$-tuple resulting from reading the set $\{1,\dots, 2n\}$ as indicated in Corollary~\ref{cor:decorated permutations from interval representations of UIOs} equals the $2n$-tuple resulting from reading the same set from the semiorder path of $A$ in northwest direction, as described in Theorem~\ref{thm:decorated permutations from the antiadjacency matrices of UIOs}. Hence the corollary follows immediately from Theorem~\ref{thm:decorated permutations from the antiadjacency matrices of UIOs}.
\end{proof}

\begin{example}
	The diagram below illustrates how to label the endpoints of a canonical interval representation of the $6$-labeled unit interval order $P$ shown in Figure~\ref{fig:UIO and its antiadjacency matrix} to obtain the decorated permutation $\pi = (1 \ 1\!2 \ 2 \ 3 \ 1\!1 \ 1\!0 \ 4 \ 5 \ 9 \ 6 \ 8 \ 7)$ associated to the positroid induced by $P$ by reading such labels from the real line (from right to left).
	\begin{figure}[h]
		\centering
		\includegraphics[width = 11cm]{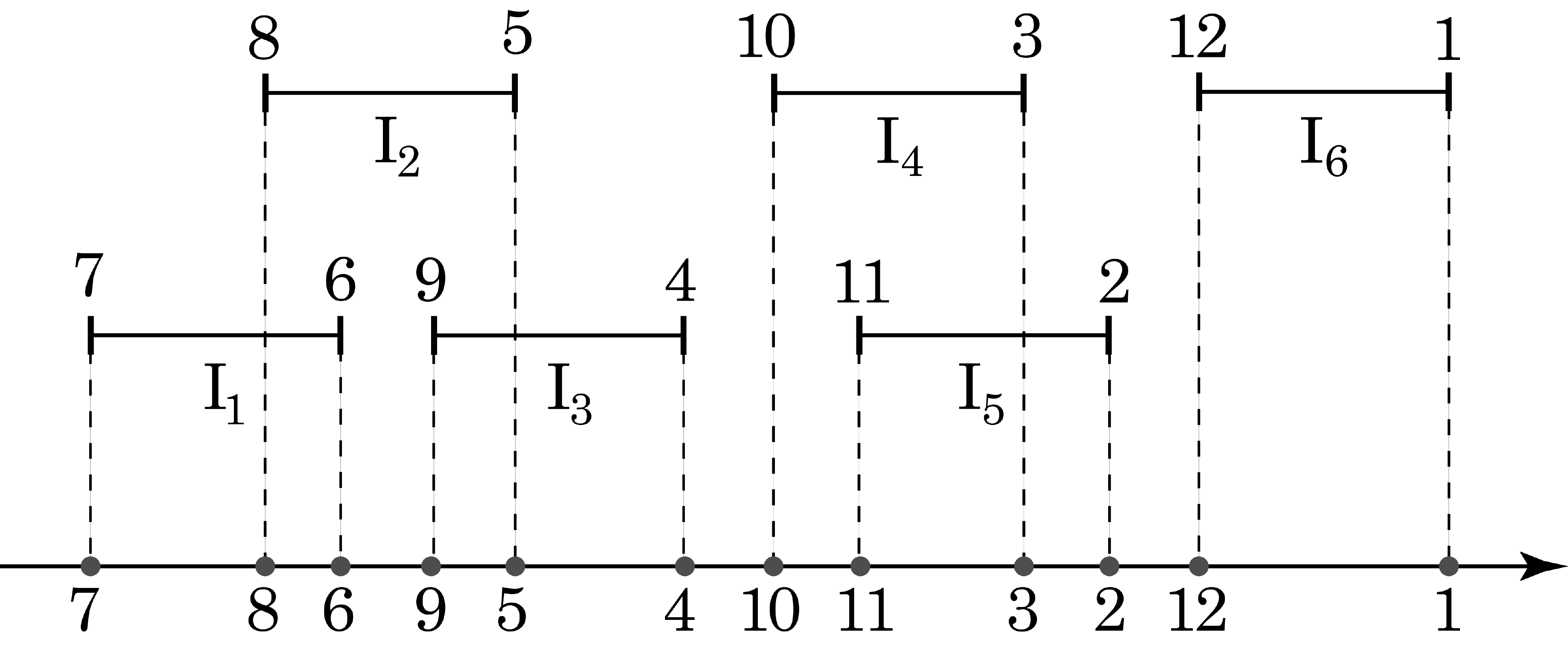}
		\caption{Decorated permutation $\pi$ encoded in a canonical interval representation of $P$.}
		\label{fig:decorated permutation from interval representation}
	\end{figure}
\end{example}

\section{The Le-diagram of a Unit Interval Positroid} \label{sec:Le-diagram}

The set consisting of all $d$-dimensional subspaces of $\mathbb{R}^n$, denoted by $\text{Gr}_{d,n}$, is called the \emph{real Grassmannian}. Elements in $\text{Gr}_{d,n}$ can also be understood as the orbits of the set of full-rank $d \times n$ real matrices under the left action of $\text{GL}_d(\mathbb{R})$. For $A \in \text{Mat}_{d,n}$ and $I \in \binom{[n]}{d}$, the \emph{Pl\"ucker coordinate} $\Delta_I(A)$ is the maximal minor of $A$ determined by the column set $I$. The embedding $\text{Gr}_{d,n} \hookrightarrow \mathbb{RP}^{\binom{n}{d} - 1}$ induced by the map $A \mapsto (\Delta_I(A))$ makes $\text{Gr}_{d,n}$ a projective variety. Let $\text{GL}^+_d(\mathbb{R})$ denote the set of real $d \times d$ matrices of positive determinant, and recall that $\text{Mat}^{\ge 0}_{d,n}$ is the set of real $d \times n$ matrices of rank $d$ having nonnegative maximal minors.

\begin{definition}
	The \emph{totally nonnegative Grassmannian}, denoted by $\text{Gr}^+_{d,n}$, is the set of orbits of $\text{Mat}^{\ge 0}_{d,n}$ under the left action of $\text{GL}^+_d(\mathbb{R})$, i.e., $\text{Gr}^+_{d,n} = \text{GL}^+_d(\mathbb{R}) \! \setminus \! \text{Mat}^{\ge 0}_{d,n}$.
\end{definition}

For a full-rank $d\times n$ real matrix $A$, let $M(A)$ denote the matroid represented by $A$, and let $[A]$ denote the element of $\text{Gr}_{d,n}$ represented by $A$. The \emph{matroid stratification} or \emph{Gelfand-Serganova stratification} of $\text{Gr}_{d,n}$ is the collection of all \emph{strata}
\[
	S_\mathcal{M} := \{[A] \in \text{Gr}_{d,n} \mid M(A) = \mathcal{M}\},
\]
where $\mathcal{M}$ runs over the set of rank $k$ representable matroids on the ground set $[n]$. For each stratum $S_{\mathcal{M}}$, we define a \emph{positroid cell} in $\text{Gr}^+_{d,n}$ by
\[
	S^+_\mathcal{M} = S_\mathcal{M} \cap \text{Gr}^+_{d,n}.
\]
Note that a representable matroid $\mathcal{M}$ is a positroid precisely when $S^+_\mathcal{M}$ is nonempty. The collection of nonempty positroid cells is called the \emph{cellular decomposition} of $\text{Gr}^+_{d,n}$. For further details, see \cite[Sections~2 and 3]{aP06}. 

Positroids, and therefore positroid cells, can be parameterized by a family of combinatorial objects called Le-diagrams. In this section, we characterize the Le-diagrams corresponding to unit interval positroids.

\begin{definition}
	Let $d,n \in \mathbb{N}$ and let $Y_\lambda$ be the Young diagram associated to a given partition $\lambda$. A \reflectbox{L}-\emph{diagram} (or \emph{Le-diagram}) $L$ of shape $\lambda$ and type $(d,n)$ is a Young diagram $Y_\lambda$ contained in a $d \times (n-d)$ rectangle, whose boxes are filled with zeros and pluses such that there is no zero entry which has simultaneously a plus entry above it in the same column and a plus entry to its left in the same row.
\end{definition}

The left picture in Figure~\ref{fig:Le-diagram of the unit interval positroid in Figure 6 and the corresponding diagram illustrating Lemma} shows a \reflectbox{L}-diagram of square shape and type $(6,12)$. As mentioned before, \reflectbox{L}-diagrams of type $(d,n)$ parameterize rank $d$ positroids on the ground set $[n]$. The next lemma yields a method to find the decorated permutation of a positroid given its corresponding \reflectbox{L}-diagram.

\begin{lemma} \cite[Lemma~4.8]{ARW16}\label{lem:decorated permutation from Le-diagram}
	The following algorithm is a bijection between \emph{\reflectbox{L}}-diagrams $D$ of
	type $(d,n)$ and decorated permutations $\pi$ on $n$ letters with $d$ weak excedances.
	\begin{enumerate}
		
		\item Replace each $+$ in the \emph{\reflectbox{L}}-diagram $D$ with an elbow joint $\,\raisebox{-0.1 \height}{\includegraphics[width=0.5cm]{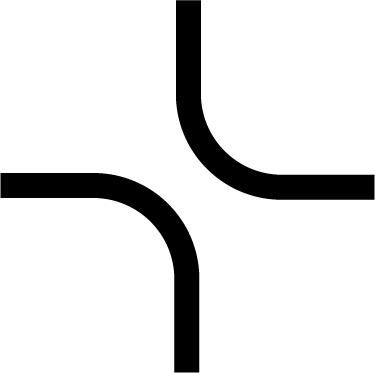}}$, and each $0$ in $D$ with a cross $\,\raisebox{-0.1 \height}{\includegraphics[width=0.5cm]{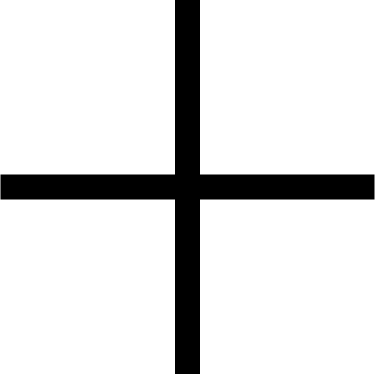}}$.
		
		\item Note that the south and east border of $Y_\lambda$ gives rise to a length-$n$ path from the northeast corner to the southwest corner of the $d \times (n-d)$ rectangle. Label the edges of this path with the numbers $1$ through $n$.
		
		\item Now label the edges of the north and west border of $Y_\lambda$ so that opposite horizontal edges and opposite vertical edges have the same label.
		
		\item View the resulting ``pipe dream" as a permutation $\pi \in S_n$ by following the ``pipes" from the northwest border to the southeast border of the Young diagram. If the pipe originating at label $i$ ends at the label $j$, we define $\pi(i) = j$.
		
		\item If $\pi(j) = j$ and $j$ labels two horizontal (respectively, vertical) edges of $Y_\lambda$, then we set $\pi(j) := \underline{j}$ (respectively, $\pi(j) := \overline{j}$).
	\end{enumerate}
\end{lemma}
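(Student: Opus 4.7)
The plan is to establish the bijection by verifying that the forward map is well-defined, showing it produces a decorated permutation with exactly $d$ weak excedances, and constructing an explicit inverse.

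For the forward direction, I would first confirm that the map is well-defined. Each cell of $Y_\lambda$ carries either an elbow or a cross, so local pipe transitions are unambiguous. Since a pipe moves monotonically from the northwest border toward the southeast border (turning at most once in any given cell), every pipe entering at a northwest boundary edge exits at a unique southeast boundary edge. Because pipes are pairwise disjoint except at transverse crossings, the entrance-to-exit assignment defines a permutation $\pi \in S_n$. A pipe contributes a fixed point precisely when it traverses a full row or column of crosses in a straight line: a pipe entering at the west edge of a row and crossing east through all crosses of that row exits at the east edge of the same row (giving $\overline{j}$), while a pipe entering at the north edge of a column and crossing south through all crosses exits at the south edge of the same column (giving $\underline{j}$), which matches step~(5). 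To count weak excedances, I would identify them with the pipes that start at vertical edges on the northwest border (together with counterclockwise fixed points), and observe that there are exactly $d$ such vertical edges, one per row of the bounding rectangle.

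For the inverse, given a decorated permutation $\pi$ on $[n]$ with $d$ weak excedances, the shape $\lambda$ is recovered from the positions of weak excedances versus non-excedances: these determine, along the southeast border, which steps are vertical (rows) and which are horizontal (columns), and the resulting lattice path cuts out $\lambda$. I would then fill the cells inductively, proceeding from the northeast corner toward the southwest corner. At each cell, the two pipes passing through must either cross or turn; the local choice is dictated by where those pipes must exit according to $\pi$, combined with the already-filled cells above and to the left. A direct verification shows that applying the forward map to the resulting filling recovers $\pi$.

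The main obstacle will be showing that the inverse construction always produces a valid \reflectbox{L}-filling: no forbidden pattern (a $0$ with a $+$ above it in the same column and a $+$ to its left in the same row) is allowed to arise. The geometric content of the Le condition is that such a forbidden pattern would permit a local swap (replacing a cross with an elbow, or vice versa) in a subsquare of cells without altering the pipe-dream's global permutation, contradicting the uniqueness of the inverse. Ruling this out requires a careful case analysis at each cell, showing that the permutation data combined with the already-filled cells above and to the left uniquely determines the entry in a manner compatible with the Le condition. Once the local analysis is complete, verifying that the forward and inverse maps compose to the identity on both sides finishes the proof.
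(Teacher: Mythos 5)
First, note that the paper does not prove this lemma: it is quoted verbatim from \cite[Lemma~4.8]{ARW16} (where it is in turn attributed to Postnikov), so there is no in-paper argument to compare yours against. Your outline of the forward direction is sound: with the elbow convention used here every pipe segment moves either south or east, so each pipe terminates at a unique edge of the southeast border and the assignment is a permutation; pipes entering at the $d$ vertical (west-border) edges either satisfy $\pi(i)>i$ or traverse an all-cross row and become $\overline{i}$, while pipes entering at horizontal edges satisfy $\pi(i)<i$ or become $\underline{i}$, which gives exactly $d$ weak excedances. Recovering the shape $\lambda$ from the set of weak excedances of $\pi$ is likewise correct.

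The genuine gap is in the inverse. Asserting that ``the local choice is dictated by where those pipes must exit according to $\pi$'' does not establish that the entry of each cell is forced, that the greedy filling never reaches a contradiction, and that the resulting filling satisfies the Le condition; these are precisely the nontrivial claims, and you defer all of them to a case analysis that is not carried out. Moreover, the mechanism you offer for why the Le condition is the right condition --- that a forbidden $0$ with a $+$ above it and a $+$ to its left would permit a local swap ``without altering the pipe-dream's global permutation'' --- is asserted rather than proved, and as stated it is not obviously true: changing a single cell generally does change the permutation. The correct statement is global, about the fibers of the map from arbitrary $0/+$ fillings of $Y_\lambda$ to permutations (equivalently, that Le-fillings are exactly those fillings in which no two pipes cross more than once, so each fiber contains a unique Le-filling). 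To complete the argument you would need either to prove that characterization, or to give a closed-form rule for the entry of each cell directly in terms of $\pi$ (as is done in the literature via Grassmann necklaces) and then verify both compositions. As it stands, the proposal is a plausible plan for the standard argument rather than a proof.
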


The picture on the left of Figure~\ref{fig:Le-diagram of the unit interval positroid in Figure 6 and the corresponding diagram illustrating Lemma} shows the \reflectbox{L}-diagram corresponding to the positroid induced by the unit interval order displayed in Figure~\ref{fig:UIO and its antiadjacency matrix}. The ``pipe dream" of this \reflectbox{L}-diagram, as described in Lemma~\ref{lem:decorated permutation from Le-diagram}, is depicted on the right.

\begin{figure}[h]
	\centering
	\raisebox{0.1 \height}{\includegraphics[width = 4.3cm]{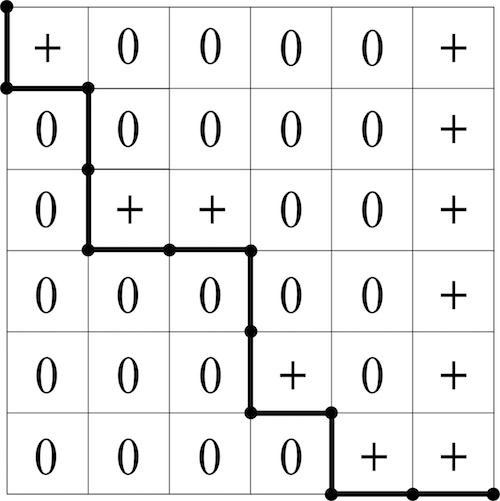}} \hspace{2cm}
	\includegraphics[width = 4.64cm]{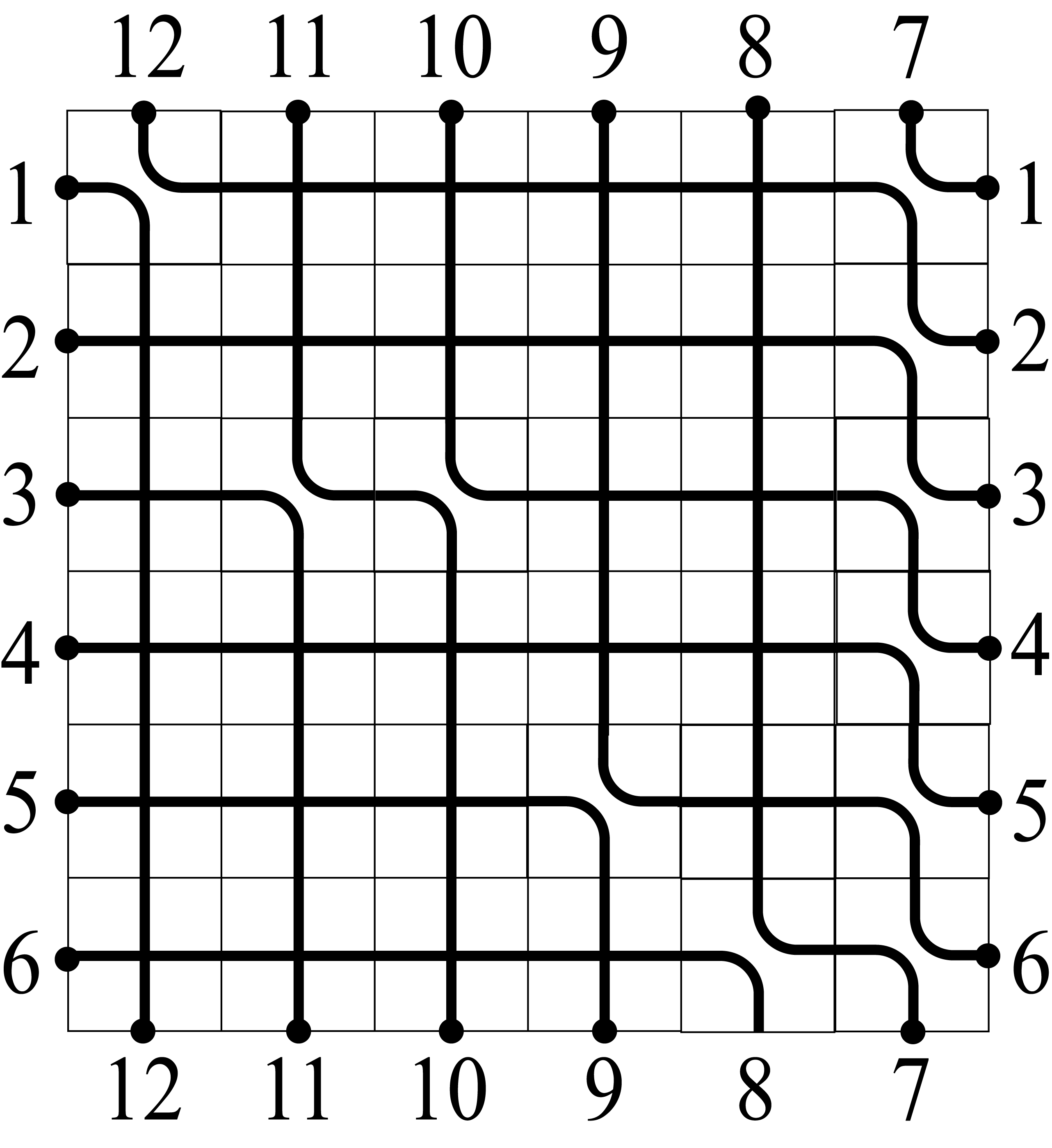}
	\caption{A Le-diagram and its corresponding ``pipe dream" as described in Lemma~\ref{lem:decorated permutation from Le-diagram}.}
	\label{fig:Le-diagram of the unit interval positroid in Figure 6 and the corresponding diagram illustrating Lemma}
\end{figure}

The next theorem provides a characterization of the \reflectbox{L}-diagrams corresponding to unit interval positroids.

\begin{theorem} \label{prop:description of the Le-diagram of a unit interval positroid}
	A \emph{\reflectbox{L}}-diagram $L$ of type $(n,2n)$ parameterizes a unit interval positroid on $[2n]$ if and only if its shape $\lambda$ is a square of size $n$ and $L$ satisfies the following two conditions:
	\begin{enumerate}
		\item every column has exactly one plus except the last one that has $n$ pluses;
		\vspace{3pt}
		\item the horizontal unit steps right below the bottom-most pluses are the horizontal steps of a length $2n$ Dyck path supported on the main diagonal of $L$.
	\end{enumerate}
\end{theorem}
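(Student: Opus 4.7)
The plan is to translate the question from \reflectbox{L}-diagrams to decorated permutations via Lemma~\ref{lem:decorated permutation from Le-diagram}, and then compare the resulting permutations with the characterization given by the Main Theorem.

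First, for the shape $\lambda$, observe that from Lemma~\ref{lem:correspondence between totally nonnegative square and rectangular matrix} the first $n$ columns of $B = \psi(A)$ form an identity block, which gives a lex-min basis of the induced positroid. Hence the first coordinate set of the associated Grassmann necklace is $I_1 = \{1, \dots, n\}$. Under the standard bijection between $n$-subsets of $[2n]$ and Young diagrams inside the $n \times n$ rectangle (via the positions of vertical steps along the south-east boundary of $\lambda$), this forces $\lambda$ to be the full $n \times n$ square. An alternative route to the same conclusion comes from condition~(1) of the Main Theorem: each $k \in \{1,\dots,n\}$ is a weak excedance of $\pi^{-1}$ while no $k \in \{n+1,\dots,2n\}$ is, so the set of weak excedances of $\pi^{-1}$ is $\{1,\dots,n\}$.

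Second, with $\lambda = n \times n$ fixed, I would carry out the pipe-dream analysis of Lemma~\ref{lem:decorated permutation from Le-diagram}. Under its conventions, the top of column $j$ receives the label $2n-j+1$ and the left of row $i$ receives the label $i$. Assuming conditions~(1) and~(2) of the theorem, write $r_j$ for the row of the unique plus in column $j$ (for $j<n$) and recall that column $n$ consists entirely of elbows. I would trace each pipe entering at the top of column $j$: it descends through crosses, turns east at the elbow $(r_j,j)$, and then travels east along row $r_j$ until either (a) it turns south at a subsequent elbow $(r_j,j')$ with $j<j'<n$ and exits at the bottom of column $j'$ with label $2n-j'+1$, or (b) it reaches column $n$, where the cascade of elbows deflects it south to row $r_j+1$ and then east, exiting at the right of row $r_j+1$ with label $r_j+1$. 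A parallel analysis handles the pipes entering from the west, so $\pi$ becomes an explicit function of the parameters $r_1,\dots,r_{n-1}$.

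Third, I would match this description to the Main Theorem. Case~(b) exits produce the ``ascending'' successions in the $2n$-cycle while case~(a) exits produce the ``descending'' ones, so the Dyck-path condition~(2) of the present theorem is exactly what guarantees conditions~(1) and~(2) of the Main Theorem; meanwhile, the fact that column $n$ is entirely pluses is what prevents $\pi$ from splitting into shorter cycles by linking the east and south borders into a single $2n$-cycle. This settles the forward implication. For the reverse implication I would close by counting: the \reflectbox{L}-diagrams satisfying conditions~(1) and~(2) are in bijection with Dyck paths of length $2n$ via their bottom-most-plus profile, so there are $\tfrac{1}{n+1}\binom{2n}{n}$ such diagrams, which matches the number of unit interval positroids by the Main Theorem. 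Combined with the injectivity from the forward direction, this forces the correspondence to be a bijection.

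The main obstacle will be the pipe-tracing analysis itself, in particular confirming that the Dyck path extracted from the bottom-most pluses of $L$ agrees, under the correspondence of Theorem~\ref{thm:reading decorated permutation from Dyck matrix}, with the Dyck path encoded by the decorated permutation of the induced positroid. Handling the interaction between the all-elbow last column and the single-elbow earlier columns — and translating between the geometric Dyck path inside the \reflectbox{L}-diagram and the combinatorial Dyck path of the cycle structure — is the crux of the argument.
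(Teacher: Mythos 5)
Your proposal follows essentially the same route as the paper's proof: compute the decorated permutation of an $L$ satisfying (1)--(2) by tracing pipes through the single-elbow columns and the all-elbow last column, match the result against the known characterization of unit interval positroid permutations (the paper compares with the explicit formula of Proposition~\ref{prop:explicity function for decorated permutations}, you with the Main Theorem's cycle form --- equivalent targets), and close the converse by the same Catalan-number counting argument. The plan is sound; your observation that $I_1=[n]$ forces the square shape is a nice explicit justification of a point the paper leaves implicit in the count.
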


\begin{proof}
	Suppose first that $L$ satisfies (1) and (2). To verify that $L$ corresponds to a unit interval positroid, let us use Lemma~\ref{lem:decorated permutation from Le-diagram} to compute its decorated permutation $\pi$ and show that $\pi^{-1}$ satisfies Proposition \ref{prop:explicity function for decorated permutations}. Note that $\pi^{-1}(1) = n+1$. For $i \in [2n] \! \setminus \! \{1\}$, we find $\pi^{-1}(i)$.
	
	Assume first that $i \in \{2, \dots, n\}$. If there is only one plus in the $(i-1)$-st row of $L$ (which means that $\omega(j) \neq i-1$ for each $j \in J$), it follows by Lemma~\ref{lem:decorated permutation from Le-diagram} that $\pi^{-1}(i) = i-1$. On the other hand (which means that there is exactly one principal element $j$ in $\omega^{-1}(i-1)$), one obtains that $\pi^{-1}(i)$ is the label of the first column (from right to left) of $L$ having a plus in the $(i-1)$-st row (which means $\pi^{-1}(i) = j$).
	
	Assume now that $i \in \{n+1, \dots, 2n\}$. If the bottom-most plus in the column of $L$ labeled by $i$ is the last plus from right to left in its row, which is labeled by $\omega(i)$, then by Lemma~\ref{lem:decorated permutation from Le-diagram} it follows that $\pi^{-1}(i) = \omega(i)$ (note, in this case, that $i = 2n$ or $i+1$ is a principal index). On the other hand, the columns of $L$ labeled by $i$ and $i+1$ are identical (i.e., $i+1$ is not a principal index), and Lemma~\ref{lem:decorated permutation from Le-diagram} yields $\pi^{-1}(i) = i+1$.
	
	Thus, $\pi^{-1}$ is as described in Proposition \ref{prop:explicity function for decorated permutations}, and so $\pi$ is the decorated permutation of a unit interval positroid on the ground set $[2n]$. As the number of \reflectbox{L}-diagrams satisfying the conditions above and the number of decorated permutations corresponding to unit interval positroids on the ground set $[2n]$ are equal to the $n$-th Catalan number, the proof follows.
\end{proof}

As a result of Theorem~\ref{prop:description of the Le-diagram of a unit interval positroid}, each unit interval positroid cell in $\text{Gr}^+_{k,n}$ can be indexed by a $\reflectbox{L}$-diagrams described in the same theorem. Postnikov proved that the positroid cell indexed by a $\reflectbox{L}$-diagram $L$ has dimension equal to the number of pluses of $L$ \cite[Theorem 4.6]{aP06}. This immediately implies the following corollary.

\begin{cor}
	The positroid cell of a unit interval positroid on the ground set $[2n]$ inside the cell decomposition of $\emph{Gr}_{n,2n}$ has dimension $2n-1$.
\end{cor}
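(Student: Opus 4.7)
The plan is to apply Postnikov's dimension formula, which states that the positroid cell indexed by a \reflectbox{L}-diagram $L$ has dimension equal to the total number of $+$ entries in $L$ (cited in the text just before the corollary from \cite[Theorem~4.6]{aP06}). So the proof reduces to counting the pluses in the \reflectbox{L}-diagram associated with an arbitrary unit interval positroid on $[2n]$.

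First I would invoke Theorem~\ref{prop:description of the Le-diagram of a unit interval positroid}, which pins down the shape and plus-content of any such \reflectbox{L}-diagram: its shape is the $n \times n$ square, every column except the last one contains exactly one $+$, and the last column is entirely filled with pluses (hence contains $n$ pluses). There are $n-1$ columns of the first type, each contributing a single plus, together with the rightmost column contributing $n$ pluses. Adding these gives $(n-1) \cdot 1 + n = 2n-1$ pluses in total.

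Finally, I would conclude by citing \cite[Theorem~4.6]{aP06} one more time to translate this count into the dimension of the corresponding cell: the positroid cell of a unit interval positroid on $[2n]$ has dimension $2n-1$. There is essentially no obstacle here; the corollary is an immediate arithmetic consequence of the structural description of unit interval positroid \reflectbox{L}-diagrams established in Theorem~\ref{prop:description of the Le-diagram of a unit interval positroid} together with Postnikov's dimension formula, and the only care needed is to ensure the last column of the square \reflectbox{L}-diagram is correctly counted as contributing $n$ pluses rather than $1$.
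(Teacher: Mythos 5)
Your proposal is correct and is exactly the argument the paper intends: the corollary is stated as an immediate consequence of Theorem~\ref{prop:description of the Le-diagram of a unit interval positroid} and Postnikov's dimension formula, with the plus-count $(n-1)\cdot 1 + n = 2n-1$ left implicit in the text. Your explicit counting fills in that routine step correctly.
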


\section{Adjacency of Unit Interval Positroid Cells} \label{sec:adjacency of UIP cells}

Given a decorated permutation $\pi$ on $n$ letters, its \emph{chord diagram} is constructed in the following way. First, place $n$ points labeled by $[n]$ in clockwise order around a circle. For all $i, j \in [n]$ with $i \neq j$ and $\pi(i) = j$, draw a directed chord from $i$ to $j$. If $\pi$ fixes $i$, then draw a directed chord from $i$ to $i$, oriented counterclockwise if and only if $\pi(i) = \overline{i}$. For $i, j \in [n]$, let $\text{Arc}(i,j)$ denote the set of points in the boundary circle of the chord diagram from $i$ to $j$ (both included) in clockwise order. Figure~\ref{fig:chord diagram of the running decorated permutation} shows an example of a chord diagram.
\begin{figure}[h]
	\centering
	\includegraphics[width = 4.0cm]{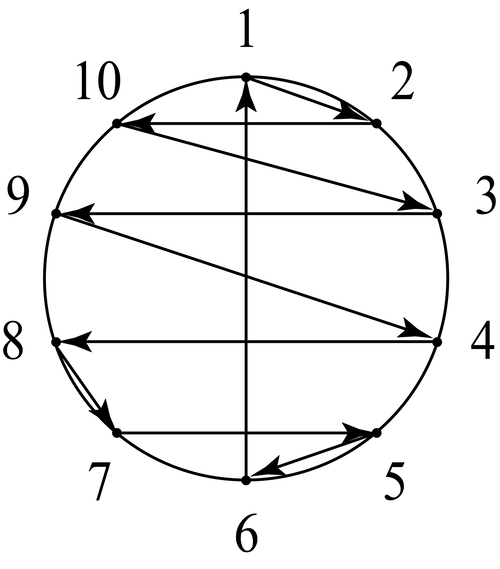}
	\caption{Chord diagram of the decorated permutation in Example~\ref{ex:main example}.}
	\label{fig:chord diagram of the running decorated permutation}
\end{figure}

Let $AD$ and $CB$ be two chords in the chord diagram of a decorated permutation $\pi$. We say that $AD$ and $CB$ form a \emph{crossing} if they intersect inside the circle or on its boundary, and this crossing is \emph{simple} if there are no other chords from $\text{Arc}(C, A)$ to $\text{Arc}(B, D)$. The left diagram in Figure~\ref{fig:crossings and alignments} shows a simple crossing. On the other hand, two chords $AB$ and $CD$ form an \emph{alignment} if they do not intersect and have a parallel orientation as shown in the right diagram of Figure~\ref{fig:crossings and alignments}. Notice that if $A$ and $B$ coincided in the right diagram below, then in order for $AB$ and $CD$ to have parallel orientation $AB$ must be a loop oriented counterclockwise. An alignment, as shown in the right side of the picture below, is said to be \emph{simple} if there are no other chords from $\text{Arc}(C,A)$ to $\text{Arc}(B,D)$.

\vspace{0.5cm}

\begin{figure}[h]
	\centering
	\includegraphics[width = 3.2cm]{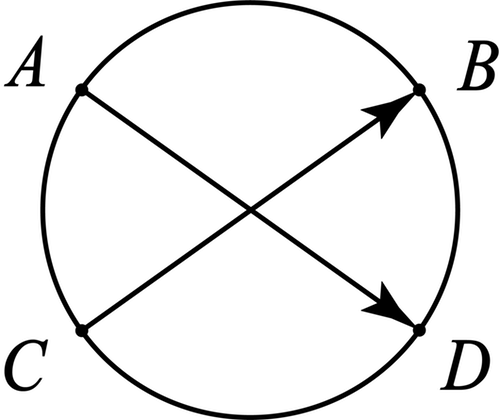} \hspace{2.5cm}
	\includegraphics[width = 3.2cm]{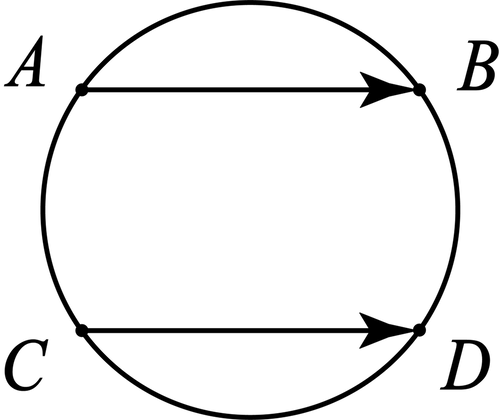}
	\caption{A simple crossing on the left and a simple alignment on the right.}
	\label{fig:crossings and alignments}
\end{figure}

\vspace{0.5cm}

Let $\pi_1$ and $\pi_2$ be two decorated permutations of the same size $n$. We say that $\pi_1$ \emph{covers} $\pi_2$, and write $\pi_1 \to \pi_2$, if the chord diagram of $\pi_2$ is obtained by turning a simple crossing of $\pi_1$ into a simple alignment. This is depicted in Figure~\ref{fig:covering relation}.

\vspace{0.5cm}

\begin{figure}[h]
	\centering
	\includegraphics[width = 3cm]{Crossing.png} \hspace{1cm}
	\raisebox{2.5 \height}{\includegraphics[width = 2cm]{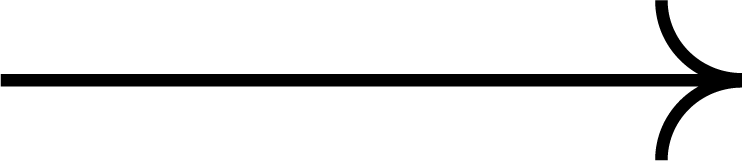}} \hspace{1cm}
	\includegraphics[width = 3cm]{Alignment.png}
	\caption{A covering relation.}
	\label{fig:covering relation}
\end{figure}

\vspace{0.5cm}

If the points $A$ and $B$ happen to coincide, then the chord from $A$ to $B$ in the chord diagram of $\pi_2$ degenerates to a counterclockwise loop. Similarly, if the points $C$ and $D$ coincide, then the chord from $C$ to $D$ in the chord diagram of $\pi_2$ becomes a clockwise loop. Finally, if $A=B$ or $C=D$, then the loops at $A$ and $C$ in the chord diagram of $\pi_2$ must be counterclockwise and clockwise, respectively. These three types of covering relations, illustrated in Figure~\ref{fig:degenerate covering relations}, are said to be \emph{degenerate}.
\begin{figure}[h]
	\centering
	\includegraphics[width = 3cm]{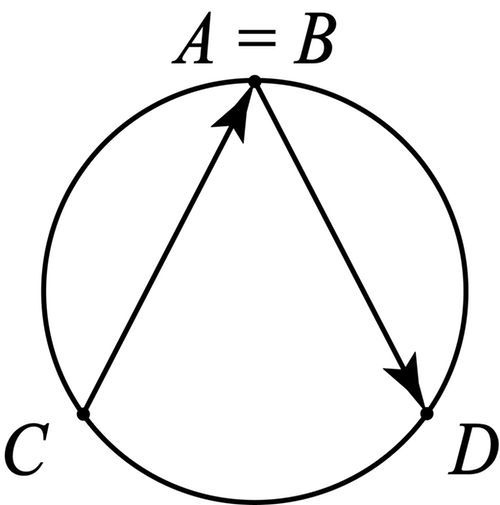} \hspace{1cm}
	\raisebox{2.7 \height}{\includegraphics[width = 2cm]{ArrowCovering.png}} \hspace{1cm}
	\includegraphics[width = 3cm]{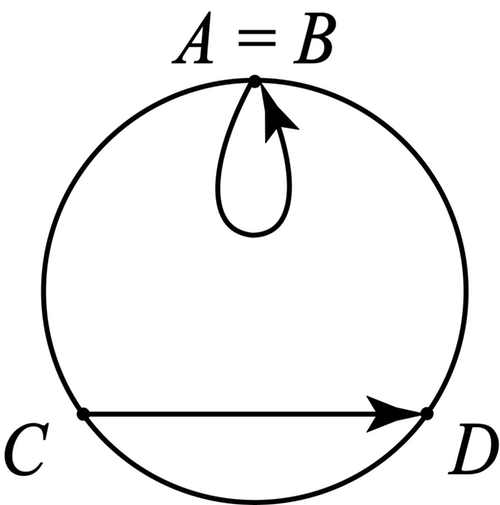} \vspace{1.4cm} 
	
	\includegraphics[width = 3cm]{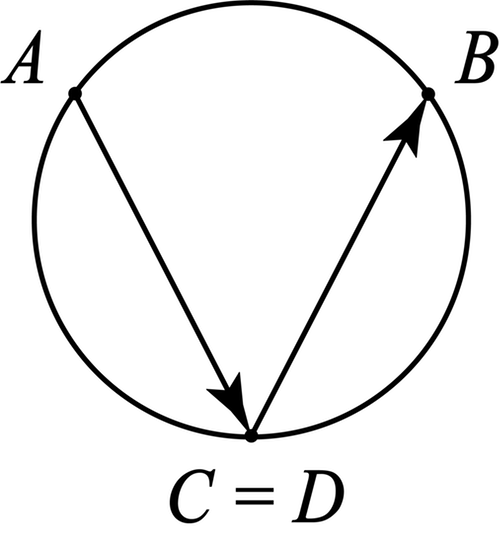} \hspace{1cm}
	\raisebox{3.4 \height}{\includegraphics[width = 2cm]{ArrowCovering.png}} \hspace{1cm}
	\includegraphics[width = 3cm]{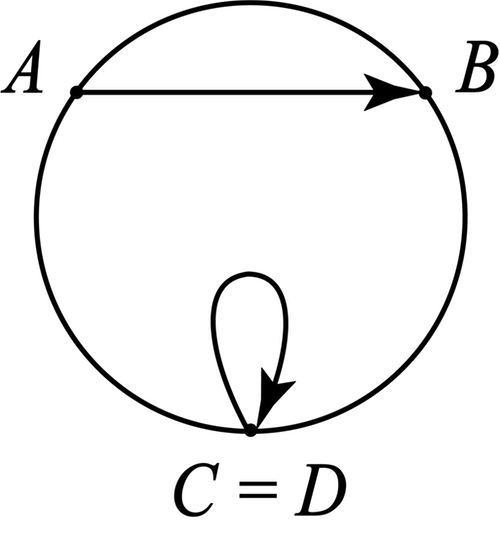} \vspace{0.5cm} 
	
	\includegraphics[width = 2.7cm]{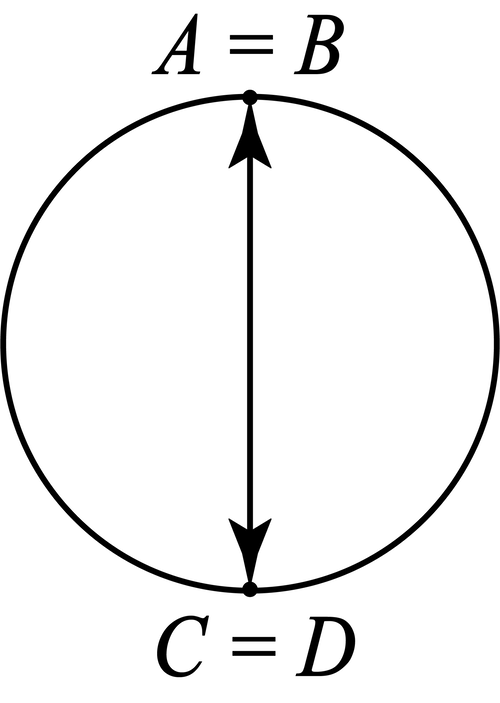} \hspace{1.15cm}
	\raisebox{3.6 \height}{\includegraphics[width = 2cm]{ArrowCovering.png}} \hspace{1.15cm}
	\includegraphics[width = 2.7cm]{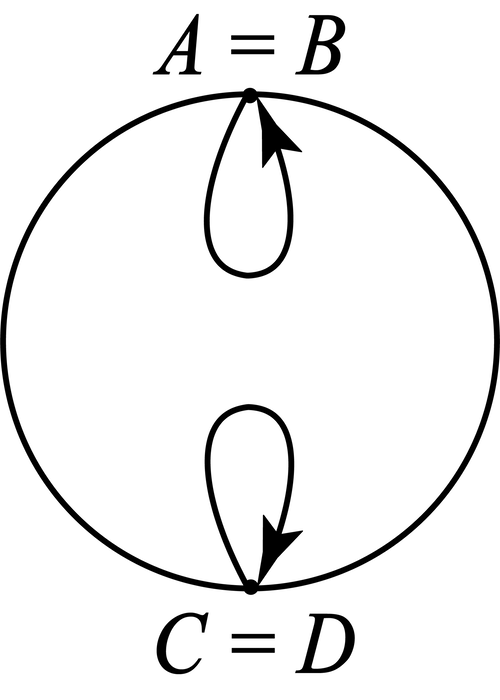}
	
	\caption{The three degenerate covering relations.}
	\label{fig:degenerate covering relations}
\end{figure}

Two positroid cells are \emph{adjacent} if the decorated permutation parameterizing them cover a common decorated permutation. Here is a necessary and sufficient condition for two unit interval positroid cells to be adjacent.

\begin{prop} \label{prop:condition for adjacency}
	Let $P_1$ and $P_2$ be two distinct rank $n$ unit interval positroids and $\pi_1$ and $\pi_2$ their respective decorated permutations. Then $P_1$ and $P_2$ label adjacent positroid cells if and only if there exists $i \in [2n] \! \setminus \! \{1, n+1\}$ such that when $i$ is removed from the cycle decomposition of $\pi_1$ and $\pi_2$ the resulting cycles are equal.
\end{prop}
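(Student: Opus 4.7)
The plan is to characterize adjacency of unit interval positroid cells through a combinatorial condition on the cycle decompositions of $\pi_1$ and $\pi_2$, exploiting the explicit $2n$-cycle structure furnished by the Main Theorem and working with simple (possibly degenerate) uncrossings of Figure~\ref{fig:degenerate covering relations}.

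For the reverse implication, I will suppose $i \in [2n] \setminus \{1, n+1\}$ is such that removing $i$ from the $2n$-cycles of $\pi_1$ and $\pi_2$ yields the same $(2n-1)$-cycle $\sigma$. Setting $a_s := \pi_s^{-1}(i)$ and $b_s := \pi_s(i)$ for $s \in \{1,2\}$, I define $\pi$ to be the decorated permutation that fixes $i$ (with the appropriate decoration coming from the positroid structure) and acts as $\sigma$ on $[2n] \setminus \{i\}$; equivalently, $\pi = (i,\, b_s) \cdot \pi_s$ for both $s=1,2$. In the chord diagram of each $\pi_s$, the two chords $a_s \to i$ and $i \to b_s$ meet at $i$ and thus form a degenerate crossing of the type in Figure~\ref{fig:degenerate covering relations}. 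Uncrossing them turns $i$ into a loop and produces the chord $a_s \to b_s$, yielding exactly $\pi$. To confirm $\pi_s \to \pi$ is a valid covering move I will verify that this degenerate crossing is simple, invoking conditions~(1) and~(2) of the Main Theorem: the increasing and decreasing orderings on $\{1,\dots,n\}$ and $\{n+1,\dots,2n\}$, together with the ballot condition, confine the remaining chords of $\pi_s$ to arcs that do not interfere with the crossing at $i$. This establishes $\pi_1 \to \pi \leftarrow \pi_2$ and hence adjacency.

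For the forward implication, I will assume the cells are adjacent, so there exists a common $\pi$ with $\pi_1 \to \pi$ and $\pi_2 \to \pi$ via simple uncrossings. Each such uncrossing splits the $2n$-cycle $\pi_s$ into two cycles of $\pi$, and these two cycles must coincide as $s$ varies. The key step is to show that one of the two cycles of $\pi$ must be trivial, that is, a single fixed point $i$, so that both $\pi_1 \to \pi$ and $\pi_2 \to \pi$ are degenerate uncrossings sharing the endpoint $i$. I argue this by contradiction: if the split were non-degenerate, with both cycles of $\pi$ of length at least two, then a case analysis of all transposition merges of those two cycles would show that at most one such merge produces a $2n$-cycle satisfying conditions~(1) and~(2) of the Main Theorem, forcing $\pi_1 = \pi_2$ and contradicting the hypothesis. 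Once the fixed-point structure of $\pi$ is established, the fact that $n+1 \to 1$ is a nontrivial chord of every unit interval $2n$-cycle (Proposition~\ref{prop:explicity function for decorated permutations}) excludes $i \in \{1, n+1\}$. Removing $i$ from $\pi_1$ and $\pi_2$ then recovers the common $(2n-1)$-cycle of $\pi$.

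The main technical obstacle is the forward direction, specifically the claim that non-degenerate simple uncrossings of unit interval $2n$-cycles cannot be ``reversed'' by a different unit interval cycle. Carrying this out rigorously requires tracking how the ordering constraint and the ballot inequality restrict the admissible transposition merges of a two-cycle permutation, ruling out all merges other than the one that reconstructs $\pi_s$.
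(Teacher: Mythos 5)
Your overall architecture matches the paper's: the reverse direction is handled exactly as in the paper (the shared fixed point $i$ gives degenerate coverings $\pi_1 \to \pi \leftarrow \pi_2$), and for the forward direction you correctly reduce to showing that the two coverings cannot both be non-degenerate. But the crux of that reduction is left unexecuted. You propose to rule out the non-degenerate case by ``a case analysis of all transposition merges of those two cycles,'' which you yourself flag as the main technical obstacle; as written this is a plan, not a proof, and it is the entire content of the hard direction.

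The idea you are missing is a localization statement that makes the non-degenerate case collapse immediately: by Theorem~\ref{thm:main result}, in the chord diagram of a unit interval decorated permutation every non-degenerate \emph{simple} crossing involves the chord from $n+1$ to $1$ (the ordering conditions (1) and (2) force all other pairs of chords into nested or aligned position). Consequently, if $\pi_s \to \pi$ is non-degenerate it must uncross the chord $n+1 \to 1$ together with some chord $x \to y$, and after uncrossing one reads off $y = \pi(n+1)$ and $x = \pi^{-1}(1)$; thus $\pi_s$ is uniquely reconstructible from $\pi$, forcing $\pi_1 = \pi_2$ and contradicting the hypothesis. Your alternative claim --- that at most one transposition merge of the two cycles of $\pi$ yields a $2n$-cycle satisfying (1) and (2) --- is plausible but is not the same statement (it quantifies over all merges rather than over simple uncrossings of the specific $\pi_s$), and you give no argument for it. Until either that claim or the localization of simple crossings to the $n+1 \to 1$ chord is actually proved, the forward implication is incomplete. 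A smaller point: your stated reason for excluding $i \in \{1, n+1\}$ (that $n+1 \to 1$ is a chord of every unit interval $2n$-cycle) should be tied explicitly to the degenerate covering: collapsing $1$ or $n+1$ to a loop would have to destroy that chord, which no degenerate covering of a unit interval permutation can do while producing a common $\pi$.
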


\begin{proof}
	Let $C_1$ and $C_2$ be the chord diagrams of $\pi_1$ and $\pi_2$, respectively. Assume first that $P_1$ and $P_2$ label adjacent positroid cells whose decorated permutations both cover a permutation $\pi$. Let $C$ denote the chord diagram of $\pi$. Theorem~\ref{thm:main result} ensures that $C_1$ and $C_2$ have a directed edge from $n+1$ to $1$ and their non-degenerate simple crossings occur only along this edge. Unlike non-degenerate coverings, degenerate coverings increase the number of fixed points; therefore $\pi_1 \to \pi$ is a degenerate covering relation if and only if so is $\pi_2 \to \pi$. If $\pi_1 \to \pi$ and $\pi_2 \to \pi$ were both non-degenerate coverings, then the fact that both covering relations uncross the chord from $n+1$ to $1$ would imply that both $\pi_1$ and $\pi_2$ can be uniquely recovered from $\pi$, as the other chord being uncrossed in both covering relations must be the chord from $\pi^{-1}(1)$ to $\pi(n+1)$. This, in turn, would contradict that $\pi_1 \neq \pi_2$. As a result, both $\pi_1 \to \pi$ and $\pi_2 \to \pi$ are degenerate coverings. As $\pi_1$ and $\pi_2$ are $2n$-cycles, $\pi$ fixes exactly one element $i \in [2n] \! \setminus \! \{1, n+1\}$. Moreover, $\pi$ is the result of removing $i$ from the cycle decomposition of any of the permutations $\pi_1$ or $\pi_2$.
	
	Conversely, suppose that for some $i \in [2n] \! \setminus \! \{1,n+1\}$, removing $i$ from the cycle decomposition of either $\pi_1$ or $\pi_2$ produces the same $(2n-1)$-cycle $\pi$. In this case, $\pi_1 \to \pi$ and $\pi_2 \to \pi$ are degenerate covering relations. Hence $\pi_1$ and $\pi_2$ are adjacent and the proof follows.
\end{proof}

\begin{example}
	There are a total of five unit interval positroids on the ground set $[6]$. Let $\pi_1, \dots, \pi_5$ be their five corresponding decorated permutations. These permutations are illustrated in the top row of Figure~\ref{fig:adjacency of unit interval positroids on [6].} via their chord diagrams. The bottom row of the same figure shows the chord diagrams of four of the decorated permutations covered by the $\pi_i$'s. Although there are more than four decorated permutations covered by the $\pi_i$'s, those depicted below are enough to obtain all possible adjacency relations between the positroid cells parameterized by the $\pi_i$'s. The exterior long arrows below represent covering relations.  
	\begin{figure}[h]
		\centering
		\includegraphics[width = 15cm]{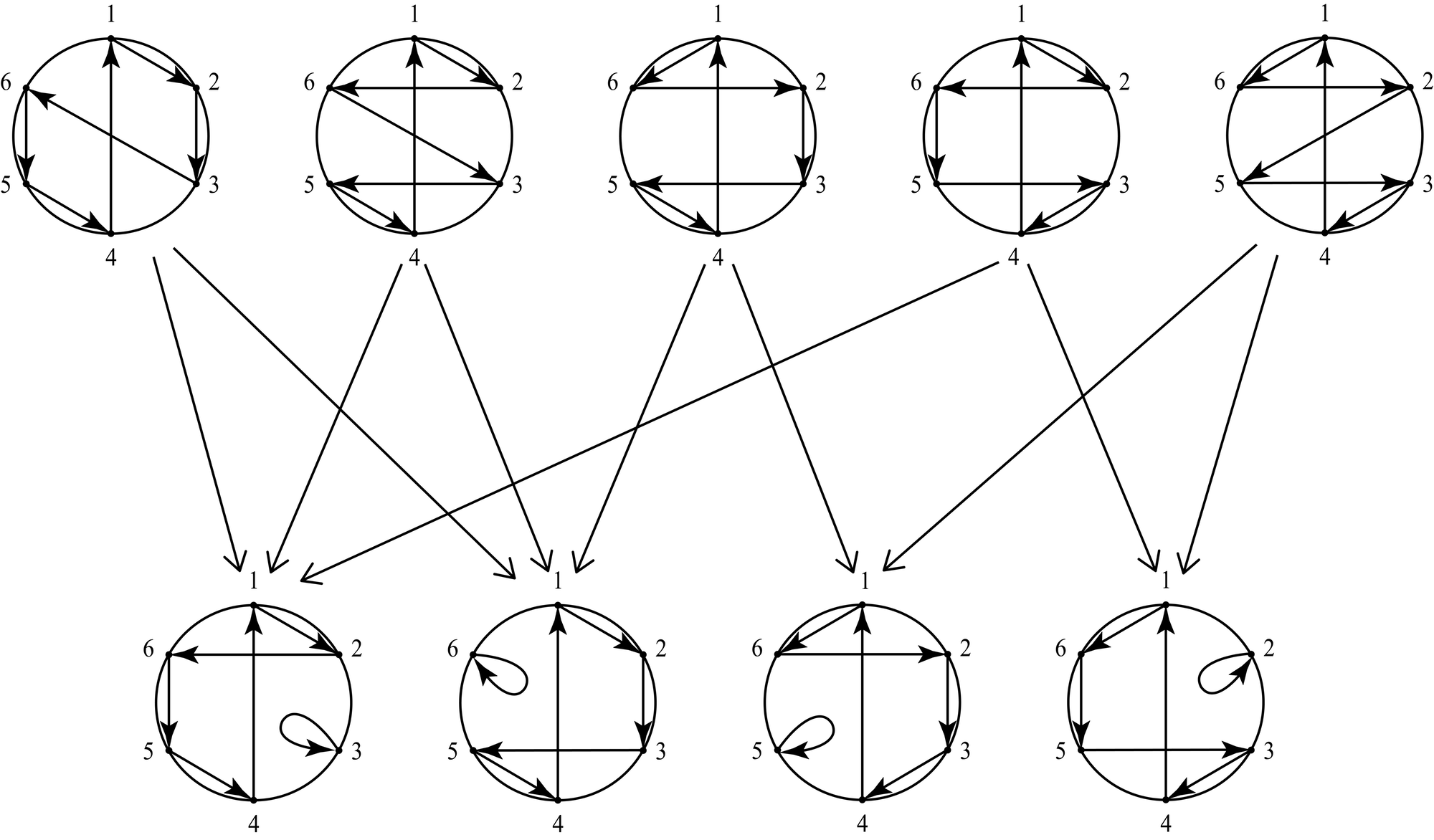}
		\caption{Subposet of $\text{CB}_{3,6}$ illustrating the adjacency relations among the unit interval positroid cells of dimension $5$.}
		\label{fig:adjacency of unit interval positroids on [6].}
	\end{figure}
\end{example}

It was proved in \cite{aP06} that if $\pi_1$ and $\pi_2$ are two decorated permutations such that $\pi_1 \to \pi_2$, then they both have the same number of weak excedances. Thus, the set of all decorated permutations of $[2n]$ having $n$ excedances can be regarded as a poset with order given by the transitive closure of the covering relation ``$\to$"; this poset is called the \emph{cyclic Bruhat order} and is denoted by $\text{CB}_{n,2n}$. Given that the adjacency relations of unit interval positroid cells can be described so nicely, we believe the subposet of $\text{CB}_{n,2n}$ consisting of those decorated permutations representing positroids in the closures of unit interval positroid cells of $\text{Gr}^+_{n,2n}$ may have an interesting description. Here we propose a problem stemming from Proposition~\ref{prop:condition for adjacency}.

\begin{prob}
	Describe the subposet of $\emph{CB}_{n,2n}$ consisting of those decorated permutations representing positroids in the closures of unit interval positroid cells of $\emph{Gr}^+_{n,2n}$.
\end{prob}

\section{An Interpretation of the $f$-vector of a Unit Interval Order} \label{sec:f-vector}

In hopes of a more thorough understanding of the \emph{$f$-vectors} of $({\bf 3}+{\bf 1})$-free posets, Skandera and Reed in \cite{SR03} posed the following open problem: characterize the $f$-vectors of unit interval orders. In this aim, we provide a combinatorial interpretation for the $f$-vector of a naturally labeled poset in terms of its antiadjacency matrix. Through this section, $P$ is assumed to be a naturally labeled poset of cardinality $n$ with antiadjacency matrix $A_P = (a_{i,j})$.

\begin{definition}
	The \emph{$f$-vector} of $P$ is the sequence $f = (1, f_0,f_1,\dots,f_{n-1})$, where $f_k$ is the number of $(k+1)$-element chains of $P$.
\end{definition}

We wish to interpret the $k$-element chains of $P$ in terms of some special Dyck paths inside $A_P$. To do this, define a \emph{valley Dyck path} of $A_P$ to be a Dyck path drawn inside $A_P$ that has its endpoints and all its valleys on the main diagonal and all its peaks in positions $(i,j)$ such that $a_{i,j} = 0$. The following figure illustrates a valley Dyck path with three peaks.

\begin{figure}[h]
	\includegraphics[width=4.0cm]{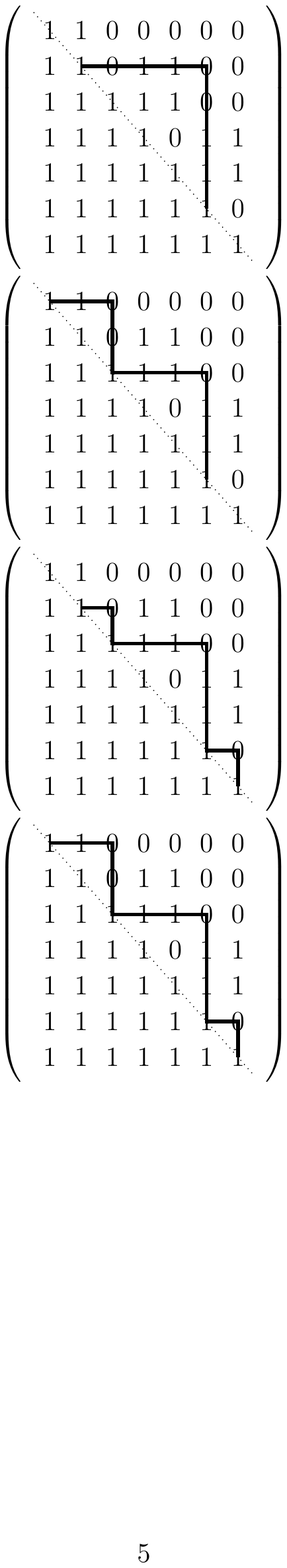}
	\caption{A valley Dyck path with three peaks inside the antiadjacency matrix of the poset displayed in Figure~\ref{fig:three valley Dyck paths}.}
\end{figure}

\begin{prop} \label{prop:f-vector interpretation}
	The entries of the $f$-vector of $P$ are $f_0=n$ and $f_k$ equals the number of valley Dyck paths of $A_P$ having exactly $k$ peaks.
\end{prop}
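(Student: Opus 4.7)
The plan is to establish an explicit bijection between the $(k+1)$-element chains of $P$ and the valley Dyck paths of $A_P$ with exactly $k$ peaks. The case $f_0 = n$ is immediate: there are $n$ singletons of $P$, each giving a $1$-element chain.

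For $k \ge 1$, define $\Phi$ by sending the chain $i_0 <_P i_1 <_P \cdots <_P i_k$, which satisfies $i_0 < i_1 < \cdots < i_k$ as integers because $P$ is naturally labeled, to the valley Dyck path whose endpoints are the diagonal positions $(i_0,i_0)$ and $(i_k,i_k)$, whose $k-1$ intermediate valleys are $(i_1,i_1),\dots,(i_{k-1},i_{k-1})$, and whose $k$ peaks are $(i_0,i_1),(i_1,i_2),\dots,(i_{k-1},i_k)$. This is well-defined because each peak $(i_{r-1},i_r)$ is a zero entry of $A_P$ (as $i_{r-1} <_P i_r$), and the diagonal cells occur in strictly increasing order along the main diagonal, as demanded of a valley Dyck path. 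Injectivity is clear: the chain is recovered directly from the sequence of diagonal cells traversed by $\Phi(i_0,\dots,i_k)$.

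For surjectivity, given a valley Dyck path $\gamma$ with $k$ peaks, let $(v_0,v_0),(v_1,v_1),\dots,(v_k,v_k)$ denote its diagonal cells in order of visit, and $(p_1,q_1),\dots,(p_k,q_k)$ its peaks in the same order. The geometry of a valley Dyck path forces $p_r = v_{r-1}$ and $q_r = v_r$, because between two consecutive diagonal visits the path must ascend along row $v_{r-1}$ and descend along column $v_r$, so that the peak lies at their intersection $(v_{r-1},v_r)$. Since each such cell is a zero entry of $A_P$, we obtain $v_{r-1} <_P v_r$ for all $r$, and hence a $(k+1)$-chain $v_0 <_P v_1 <_P \cdots <_P v_k$ whose image under $\Phi$ is $\gamma$. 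The main obstacle I anticipate is justifying this geometric claim $p_r = v_{r-1}$, $q_r = v_r$; once the lattice-path conventions implicit in the definition of a valley Dyck path are pinned down (namely, that each ascent from a diagonal cell preserves the row coordinate and each descent to a diagonal cell preserves the column coordinate), the rest of the argument reduces to routine combinatorial bookkeeping.
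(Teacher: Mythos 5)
Your proposal is correct and follows essentially the same route as the paper: both arguments set up the bijection sending a chain $i_0 <_P \cdots <_P i_k$ to the valley Dyck path whose peaks are the cells $(i_{r-1}, i_r)$ and recover the chain from the diagonal visits of the path. The extra care you take in justifying that each peak must lie at the intersection of the row of one diagonal visit and the column of the next is a reasonable elaboration of a step the paper treats as immediate.
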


\begin{proof}
	To each $(k+1)$-element chain $\mathsf{c} : i_1 <_P \dots <_P i_{k+1}$ we can assign a valley Dyck path $v_{\mathsf{c}}$ with $k$ peaks as follows: the $j$-th peak begins at $(i_j,i_j)$, heads east to $(i_j,i_{j+1})$, and then heads south to $(i_{j+1}, i_{j+1})$. To see that $v_{\mathsf{c}}$ is a valley Dyck path, it suffices to notice that every peak of $v_{\mathsf{c}}$ occurs at a zero entry of $A_P$ since $i_j <_P i_{j+1}$ for each $j = 1,\dots,k$. On the other hand, suppose that $v$ is a valley Dyck path with $k$ peaks, namely $(i_1,i'_1), \dots, (i_k,i'_k)$. Then every valley of $v$ is supported on the main diagonal, which means that $i'_j = i_{j+1}$ for each $j=1,\dots,k$. Setting $i_{k+1} = i'_k$, we obtain that $v = v_{\mathsf{c}}$, where $c$ is the $(k+1)$-element chain $i_1 <_P \dots <_P i_{k+1}$. Thus, we have established a bijection that yields the desired result.
\end{proof}

\begin{example}
The naturally labeled poset $P$ depicted below has $f$-vector
\[
	f = (1, 7,12,8,2,0,0,0).
\]
Examples of valley Dyck paths realized on $A_P$ are also shown below.

\begin{figure}[h]
	\centering
		\begin{center}
			\includegraphics[width=2.5cm]{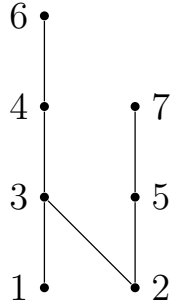}
		\end{center}
		\caption*{Naturally $7$-labeled poset.}
\end{figure}
	\vspace{20pt}
	\begin{figure}[h]
		\begin{center}
			\begin{tabular}{ccc}
				one peak & two peaks & three peaks\\
				\includegraphics[width=3.5cm]{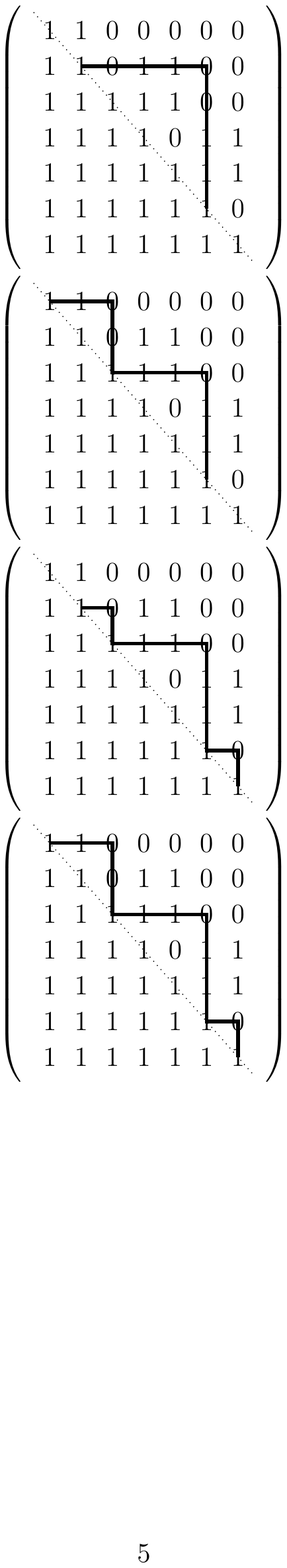} & \includegraphics[width=3.5cm]{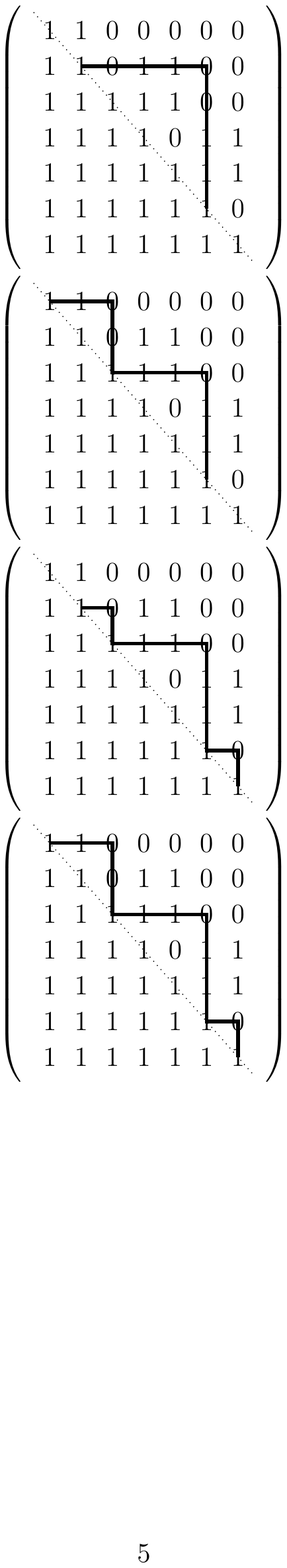} & \includegraphics[width=3.5cm]{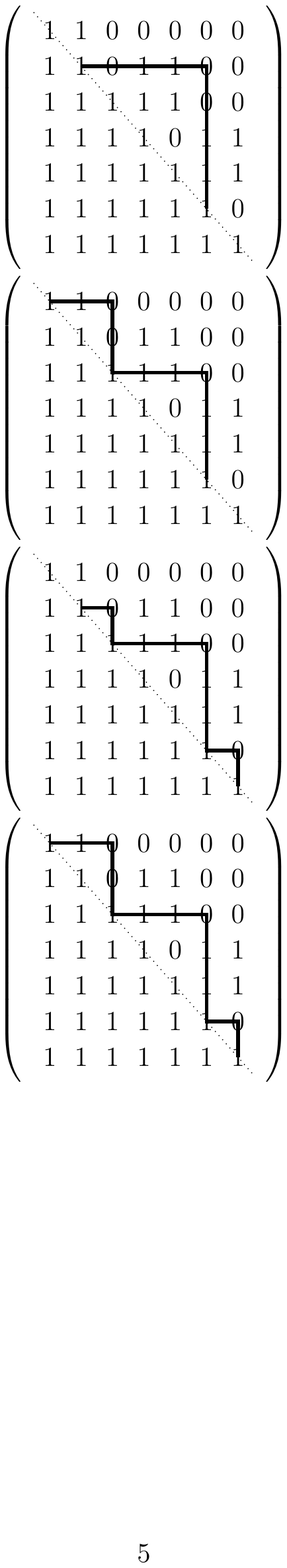}  \\
			\end{tabular}
		\end{center}
		\caption*{Valley Dyck paths inside $A_P$ with one, two, and three peaks.}
	\caption{A naturally labeled poset $P$, along with three distinct valley Dyck paths drawn inside its antiadjacency matrix $A_P$.}\label{fig:dyckposet}
	\label{fig:three valley Dyck paths}
\end{figure}
\end{example}

\noindent {\bf Remark:} Proposition~\ref{prop:f-vector interpretation} provides, in particular, an interpretation of the $f$-vector of any unit interval order. Given that a unit interval order can be labeled so that its antiadjacency matrix is a Dyck matrix, we think that the interpretation of the $f$-vector in Proposition~\ref{prop:f-vector interpretation} might be useful to find an explicit formula for the $f_k$'s. This is because zero and one entries in a Dyck matrix are nicely separated, which could facilitate counting the valley Dyck paths having exactly $k$ peaks.

\begin{prob}
	Given an $n \times n$ Dyck matrix $A$, let $r_i$ be the number of one entries in the $i$-th row of $A$. For $k \in [n-1]$, can we find, in terms of the $r_i$'s, an explicit formula for the number of valley Dyck paths of $A$ containing exactly $k$ peaks?
\end{prob}
\medskip
\medskip

\section*{Acknowledgements}

	While working on this paper, the first author was partially supported by the NSF Alliances for Graduate Education and the Professoriate, while the second author was partially supported by the UC Berkeley Department of Mathematics. Both authors are grateful to Lauren Williams for her guidance throughout this project, Federico Ardila for many helpful conversations, Alejandro Morales for the initial question that motivated this project, and the anonymous referees for many suggestions that led to an improvement of this paper. \\
	\medskip
	\medskip

\end{document}